\newtheorem{theorem}{Theorem}[section]
\newtheorem{lemma}[theorem]{Lemma}
\newtheorem{definition}[theorem]{Definition}
\newtheorem{proposition}[theorem]{Proposition}
\newtheorem{corollary}[theorem]{Corollary}
\newtheorem{claim}[theorem]{Claim}
\theoremstyle{definition}   
\newtheorem{remark}[theorem]{Remark}
\newcommand{\beq}{\begin{equation}}
	\newcommand{\eeq}{\end{equation}}
\tikzset{individu/.style={fill,thick,circle}}
\def\r{{\mathbb R}}
\def\e{{\mathbb E}}
\def\p{{\mathbb P}}
\def\P{{\bf P}}
\def\z{{\mathbb Z}}
\def\N{{\mathbb N}}
\def\T{{\mathbb T}}
\def\ee{e}
\def\d{\, \mathrm{d}}
\def\ej{{\mathbf e}}
\newcommand{\dd}{\mathrm d}
\newcommand{\Radi}{\mathsf{rad}_{\operatorname{int}}} 
\newcommand{\Rade}{\mathsf{rad}_{\operatorname{ext}}} 
\newcommand{\clt}{\mathcal{C}} 
\newcommand{\aut}{\operatorname{Aut}} 
\newcommand{\dist}{\mathrm{dist}}
\newcommand{\critical}{\mathrm{c}}
\newcommand{\wt}{\widetilde}
\newcommand{\tclt}{\widetilde{\clt}} 
\newcommand{\1}{{\text{\Large $\mathfrak 1$}}}
\numberwithin{equation}{section}
\begin{document}
	\makeatletter
	\def\@settitle{\begin{center}%
			\baselineskip14\p@\relax
			\normalfont\LARGE
			\@title
		\end{center}%
	}
	\makeatother
	\title[Speed of Random walk on dynamical percolation in nonamenable Graphs]{ \bf Speed of random walk on dynamical percolation \\ in nonamenable transitive graphs}
	\let\MakeUppercase\relax 
	
	\author[C. Gu]{\large \bf Chenlin Gu} 
	\address{Yau Mathematical Sciences Center, Tsinghua University, Beijing, China.}
	\email{gclmath@tsinghua.edu.cn}
	
	\author[J. Jiang]{Jianping Jiang} 
	\address{Yau Mathematical Sciences Center, Tsinghua University, Beijing, China.}
	\email{jianpingjiang@tsinghua.edu.cn}

	\author[Y. Peres]{Yuval Peres}
	\address{Beijing Institute of Mathematical Sciences and Applications, Beijing, China.}
	\email{yperes@gmail.com}
	
	\author[Z. Shi]{Zhan Shi}
	\address{Academy of Mathematics and Systems Science, Chinese Academy of Sciences, Beijing, China.}
	\email{shizhan@amss.ac.cn}
	
	\author[H. Wu]{Hao Wu}
	\address{Yau Mathematical Sciences Center, Tsinghua University, and Beijing Institute of Mathematical Sciences and Applications, Beijing, China.}
	\email{hao.wu.proba@gmail.com}
	
	\author[F. Yang]{Fan Yang}
	\address{Yau Mathematical Sciences Center, Tsinghua University, and Beijing Institute of Mathematical Sciences and Applications, Beijing, China.}
	\email{fyangmath@tsinghua.edu.cn}

	\maketitle

	\begin{abstract}
		Let $G$ be a nonamenable transitive unimodular graph. In dynamical percolation, every edge in $G$ refreshes its status at rate $\mu>0$, and following the refresh, each edge is open independently with probability $p$. The random walk traverses $G$ only along open edges, moving at rate $1$. In the critical regime $p=p_c$, we prove that the speed of the random walk is at most $O(\sqrt{\mu \log(1/\mu)})$, provided that  $\mu \le e^{-1}$. In the supercritical regime $p>p_c$, we prove that the speed on $G$ is of order 1 (uniformly in $\mu)$, while in the subcritical regime $p<p_c$, the speed is of order $\mu\wedge 1$. 
		
		
	\end{abstract}
	
	
	\section{Introduction}
	\label{s:intro}
	
	Let $G=(V, E)$ be an infinite graph, where $V$ and $E$ denote the sets of vertices and edges, respectively. The \textbf{dynamical percolation process} on $G$ is defined as follows: the edges in $E$ are either open or closed and independently refresh their status at an exponential rate $\mu>0$. Upon refreshing, each edge becomes open with probability $p\in (0, \, 1)$ or closed with probability $(1-p)$. (It is important to note the distinction between ``refreshing" and ``flipping": when an edge is refreshed, it has probability $p$ of becoming open, regardless of its previous state.) Fixing a starting point $X_0=o\in V$, we initiate a continuous-time random walk denoted as $(X_t)_{t \geq 0}$ on $G$. The particle jumps with rate $1$ and randomly selects one of its neighboring vertices to jump to, provided that the connecting edge is open at that specific time.
	
	We study the random walk on dynamical percolation in nonamenable graphs. Specifically, we are interested in the interplay between the refreshing rate $\mu$ and the speed of random walk. Recall that the \textbf{Cheeger constant} of a graph $G$ is defined by 
	\begin{align}\label{eq.defCheeger}
		\Phi(G) := \inf\left\{\frac{|\partial_E W|}{\sum_{v \in W} \deg(v)}: W \text{ finite subset of } V\right\},
	\end{align}
	where $\partial_{E} W$ denotes the \textbf{edge boundary} of $W$, i.e., the set of edges in $E$ with one endpoint in $W$ and the other endpoint in $V \setminus W$. The graph $G$ is said to be \textbf{nonamenable} if $\Phi(G) > 0$; otherwise it is \textbf{amenable}, i.e., $\Phi(G) = 0$. 
	
	When $G$ is \textbf{transitive}, the random walk $(X_t)_{t \geq 0}$ exhibits a well-defined speed, i.e., given any $\mu>0$ and $p\in (0,1)$, there exists a constant $v_p(\mu) \in [0, 1]$ such that
	\begin{equation}\label{eq.speed}
		\lim_{t\to \infty} \frac{\mathrm{dist}(X_0, \, X_t)}{t} = v_p(\mu) \quad\hbox{\rm a.s.\ and in $L^1$} .  
	\end{equation}
	Here, $\mathrm{dist}(X_0, \, X_t)$ represents the graph distance on $G$ between $X_0$ and $X_t$. 
	(For the proof of this fact, see \Cref{lem:existence_speed} below.) We are particularly interested in the dependence of $v_p(\mu) $ on $\mu$ when $\mu$ is much smaller than 1. This corresponds to a scenario where the status of individual edges   changes   more slowly  than the random walk moves. 
	Note that if $\mu\to \infty$, then $(X_t)_{t\ge 0}$ converges weakly to the simple random walk on $G$ with time scaled by $p$. Similarly, if $\mu$ is of order 1, we may expect that the system behaves in various ways like the ordinary random walk. This is also justified by our theoretical results in Theorems \ref{thm:Subcritical} and \ref{thm:Supercritical} below. 
	
	When $\mu$ is small, the random walk on dynamical percolation can also be viewed as an approximation of the random walk on static percolation. In particular, similar to the random walk on static percolation, the behavior of the random walk on dynamical percolation also exhibits a phase transition as $p$ crosses the critical point.
	More precisely, let $\P_p$ denote the probability measure of the (static) Bernoulli-$p$ bond percolation on $G$, where every edge is independently open with probability $p \in [0,1]$. 
	The connected components of vertices with respect to the open edges in the percolation are called {\bf clusters}, and recall that \textbf{the critical point of connectivity} $p_c = p_c(G)$ is defined by
	\begin{align}\label{eq.defpc}
		p_c(G) := \sup\left\{ p\in [0,1]: \text{ every cluster has finite size } \P_p\text{-a.s.}\right\}.
	\end{align}
	For infinite transitive graphs with bounded degree and superlinear volume growth, we know that $0 < p_c < 1$; see \cite[Theorem~1.3]{DGRSY2020}. 
	Heuristically, in the subcritical phase $p <p_c$, the moving particle is confined within a finite cluster and must wait for a duration of order $1/\mu$ before the cluster undergoes any notable changes. This suggests that the speed in the subcritical phase should be of order $O(\mu)$. 
	On the other hand, in the supercritical phase $p > p_c$, there is a positive probability for the moving particle to be inside an infinite cluster. In such cases, the speed of the random walk should be of order $O(1)$ and is not affected much by the evolution of the graph.    
	Now, we state the first two results on the speed of the random walk in the subcritical and supercritical phases, respectively. One can notice the qualitative difference between them. 

	\begin{theorem}[Subcritical phase]\label{thm:Subcritical}
		Let $G$ be a connected, locally finite, nonamenable transitive unimodular graph where each vertex has degree $d \ge 3$. For any $p\in (0,p_c)$, there exist two constants $0 < c_{\ref{thm:Subcritical}} < C_{\ref{thm:Subcritical}} < \infty$ independent of $\mu$ such that the speed of the random walk satisfies the following estimate: 
		\begin{align}\label{eq:Subcritical}
			c_{\ref{thm:Subcritical}} (\mu \wedge 1) \leq v_p(\mu) \leq \left(C_{\ref{thm:Subcritical}}\mu \right)\wedge 1.
		\end{align}
	\end{theorem}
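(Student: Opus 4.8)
The plan is to prove the two inequalities in \eqref{eq:Subcritical} separately: the upper bound by a renewal decomposition of the trajectory, and the lower bound by a comparison with simple random walk, which is where nonamenability is essential (on $\bfZ^d$ the same mechanism gives speed $0$).

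\emph{Upper bound.} Since the walk jumps at rate $1$ and each jump changes $\dist(X_0,\cdot)$ by at most $1$, one has $v_p(\mu)\le 1$, so it suffices to prove $v_p(\mu)\le C\mu$. Set $\tau_0=0$ and, given $\tau_i$, let $\mathcal{C}^{(i)}$ be the open cluster of $X_{\tau_i}$ at time $\tau_i$ and let $\tau_{i+1}$ be the first time after $\tau_i$ at which some edge incident to $\mathcal{C}^{(i)}$ flips its state. On $[\tau_i,\tau_{i+1})$ the open cluster containing the walk stays equal to $\mathcal{C}^{(i)}$ and the walk cannot leave it, whence $\dist(X_0,X_{\tau_n})\le\sum_{i<n}\diam(\mathcal{C}^{(i)})$. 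Every edge incident to $\mathcal{C}^{(i)}$ has a deterministic state (internal edges open, boundary edges closed), so $\tau_{i+1}-\tau_i$ is exponential with a rate $\mathrm{rate}(\mathcal{C}^{(i)})$ that is a function of $\mathcal{C}^{(i)}$ alone, satisfying $c_1\mu|\mathcal{C}^{(i)}|\le\mathrm{rate}(\mathcal{C}^{(i)})\le d\mu|\mathcal{C}^{(i)}|$. The sequence $(\mathcal{C}^{(i)},\tau_{i+1}-\tau_i)_i$ is obtained by sampling the (ergodic) environment-seen-from-the-particle process at the stopping times $\tau_i$, so by the ergodic theorem and \Cref{lem:existence_speed} (note $\tau_n\to\infty$),
\[
v_p(\mu)=\lim_n\frac{\dist(X_0,X_{\tau_n})}{\tau_n}\le\frac{\E_\pi[\diam(\mathcal{C})]}{\E_\pi[1/\mathrm{rate}(\mathcal{C})]},
\]
with $\pi$ the stationary law of $(\mathcal{C}^{(i)})_i$. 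Because for every frozen environment the walk is reversible with respect to the uniform measure on $V$ (which does not depend on the environment), the stationary law of the environment viewed from the particle is $\P_p$, and the standard correspondence between a continuous-time chain and its jump chain gives $\pi(\cdot)\propto\mathrm{rate}(\cdot)\,\P_p(\mathcal{C}_o=\cdot)$; the factor $\mathrm{rate}$ then cancels in the ratio, and with $\mathrm{rate}(c)\le d\mu|c|$ and $\diam(c)\le|c|$ the displayed bound is at most $d\mu\,\E_p[|\mathcal{C}_o|^2]$. Finally $\E_p[|\mathcal{C}_o|^2]<\infty$ for $p<p_c$: the susceptibility $\E_p[|\mathcal{C}_o|]$ is finite below $p_c$ (sharpness of the phase transition), and the tree-graph inequality bounds the second moment by its cube. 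Hence $v_p(\mu)\le(C_{\ref{thm:Subcritical}}\mu)\wedge 1$.

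\emph{Lower bound.} Call $t$ a \emph{singleton time} if the open cluster of $X_t$ equals $\{X_t\}$; such times occur with positive $\P_p$-probability $(1-p)^d$. At a singleton time the walk sits still until one of its $d$ (closed, exchangeably evolving) edges opens; the first to open is uniform, and the walk then crosses it to a uniformly distributed neighbour. Sampling the position at the successive times $\rho_0<\rho_1<\cdots$ of ``crossing out of a singleton'' therefore yields, up to exponential-tail corrections coming from the walk reshuffling inside its finite subcritical cluster before that cluster collapses to a point, the simple random walk on $G$; since $G$ is nonamenable and transitive, simple random walk has speed $v_{\mathrm{SRW}}>0$, and I would show this ballisticity is inherited by the embedded walk, so $\dist(X_0,X_{\rho_n})\ge c'n$. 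On the other hand $\E[\rho_{k+1}-\rho_k\mid\mathcal{F}_{\rho_k}]=O(1/(\mu\wedge1))$ with uniformly controlled exponential tails: the dominant contribution is the $O(1/\mu)$ wait for an edge at a singleton to open, preceded by the time for the current (exponentially small) cluster to collapse to a singleton around the walk, which is $O(1/(\mu\wedge1))$ in expectation by a mixing estimate for the local dynamics. Combining, $v_p(\mu)=\lim_n\dist(X_0,X_{\rho_n})/\rho_n\ge c_{\ref{thm:Subcritical}}(\mu\wedge1)$.

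The step I expect to be the main obstacle is making the simple-random-walk comparison in the lower bound rigorous: after leaving a singleton the walk enters part of the graph whose edges have \emph{not} been refreshed, so the embedded process is not literally a Markov chain on $G$ and its increments are not independent. Overcoming this appears to require either genuine regeneration times — built via a coupling that resamples the environment ahead of the walk, together with a bound on the exponentially small probability that the coupling fails — or a softer argument deducing a linear rate of escape on a nonamenable transitive graph from the mixing of the environment on the $1/\mu$ time scale and the uniform exponential decay of the size of the cluster occupied by the embedded walk. Both routes ultimately rely on the same two ingredients as the upper bound: quantitative subcriticality (exponentially small clusters, hence finite moments) and nonamenability of $G$.
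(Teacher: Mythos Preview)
Your renewal-at-cluster-flip approach is different from the paper's and, modulo some ergodic details, does give an $O(\mu)$ bound. One technical point: the cluster-of-the-walk process is not itself Markov (it is a non-injective function of the environment seen from the particle), so the ``standard correspondence between a continuous-time chain and its jump chain'' does not literally apply; the identification $\pi\propto\mathrm{rate}\cdot\P_p$ instead needs a Palm argument for the stationary point process $(\tau_i)$, on top of the stationarity of the environment seen from the particle (\Cref{lem.ViewStationary}, which is where unimodularity enters, not bare reversibility). The paper's route avoids this machinery: split $[0,t/\mu]$ into blocks of length $\beta/\mu$ and on each block confine the walk to its cluster in the percolation $\bar\eta_k$ consisting of all edges open \emph{at some moment} during the block. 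For small $\beta$ this $\bar\eta_k$ has parameter at most $1-(1-p)e^{-\beta}<p_c$, so the exponential radius tail from sharpness gives $\e[\Rade(\clt_k)]\le C$ and hence $\e[\dist(X_0,X_{t/\mu})]\le C(t\vee 1)$. This needs only the first moment of the radius, not the second moment of the volume plus the tree-graph inequality.

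\textbf{Lower bound.} Here there is a genuine gap, which you yourself identify: the singleton-exit process $(X_{\rho_k})_k$ is not a Markov chain on $G$, its increments are correlated through the unrefreshed environment ahead of the walk, and neither of your proposed fixes (regeneration via an environment-ahead coupling, or a soft transfer of the SRW rate of escape) is actually carried out. Even with a regeneration structure in hand, proving positive speed for the embedded walk on a general nonamenable transitive graph is not a matter of checking a positive mean drift---there is no preferred direction---and would require porting an entropy or spectral-radius argument to a non-Markovian walk, which is far from automatic. The paper sidesteps all of this by proving the lower bound (\Cref{prop:general_Lower}) via the evolving-set process and the Diaconis--Fill coupling: discretize at times $n/\mu$ and show, uniformly in $\mu\le 1/2$ and in the initial environment, that with probability bounded below a positive fraction of the edge boundary of any finite $S$ stays open throughout the unit interval $[n/\mu-1,n/\mu]$, forcing $\Phi_{S_n}\ge c\,\Phi(G)$ (Lemmas~\ref{lem:evo4} and~\ref{lem:evo3}). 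The evolving-set supermartingale then makes $|S_n|$ grow exponentially in $n$, so $\dist(X_0,X_{n/\mu})\gtrsim n$ with positive probability and hence $v_p(\mu)\gtrsim\mu$. Nonamenability enters through $\Phi(G)>0$ directly, with no comparison to simple random walk.
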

	
	\begin{theorem}[Supercritical phase]\label{thm:Supercritical}
		Let $G$ be a connected, locally finite, nonamenable transitive unimodular graph where each vertex has degree $d\ge 3$. For any $p\in (p_c, 1]$, there exists a constant $ c_{\ref{thm:Supercritical}} \in (0,1)$ independent of $\mu$ such that the speed of random walk satisfies the following estimate:  
		\begin{align}\label{eq:Supercritical}
			c_{\ref{thm:Supercritical}} \leq v_p(\mu) \leq 1.
		\end{align}
	\end{theorem}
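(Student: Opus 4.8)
The upper bound $v_p(\mu)\le 1$ is immediate and uniform in $p$ and $\mu$: the walk attempts a jump at rate $1$, so the number of jumps it performs on $[0,t]$ is dominated by a $\mathrm{Poisson}(t)$ variable, and each jump moves $\dist(X_0,\cdot)$ by at most $1$; dividing by $t$ and letting $t\to\infty$ gives $v_p(\mu)\le1$. All the content is the lower bound $v_p(\mu)\ge c_{\ref{thm:Supercritical}}>0$, uniformly in $\mu$. The static input I would build on is that, for $p>p_c$, every infinite cluster of Bernoulli$(p)$ percolation on a nonamenable transitive unimodular graph is a.s.\ of bounded degree and of positive anchored expansion (Chen--Peres, settling a conjecture of Benjamini--Lyons--Schramm), that positive anchored expansion forces positive speed in the intrinsic metric (Vir\'ag), and that the intrinsic metric is comparable, on the cluster, to the graph distance of $G$. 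Thus, writing $\mathbf P_p$ for the static percolation measure and $\mathcal C$ for the infinite cluster through $o$ on the event $\{o\in\mathcal C\}$ that it exists, a rate-$1$ walk run inside a fixed $\omega\sim\mathbf P_p$ started at $o$ satisfies
\[
\frac{\dist(o,X_t)}{t}\ \xrightarrow[\,t\to\infty\,]{}\ v_\star>0\qquad\text{a.s.\ on }\{o\in\mathcal C\},
\]
for a constant $v_\star=v_\star(G,p)$. The special case $p=1$ of this is the classical fact that a nonamenable transitive graph carries a positive-speed walk; it settles $p=1$ outright and supplies the comparison object for large $\mu$.

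For $\mu$ below a fixed threshold $\mu_0$ I would freeze the environment. Fix $T_0$ large (on the scale of $1/v_\star$) and $R_0$ a constant multiple of $T_0$, and consider the windows $[(k-1)T_0,kT_0]$. On the event $F_k$ that no edge within $G$-distance $R_0$ of $X_{(k-1)T_0}$ is refreshed during that window, the walk sees only the frozen configuration; $F_k$ has probability at least $\exp(-\mu T_0|B_{R_0}|)\ge\tfrac12$ for $\mu\le\mu_0:=|B_{R_0}|^{-1}T_0^{-1}\log 2$, and it is independent of the walk's clock and of the time-$0$ configuration inside that ball. Combining $F_k$ with the positive-speed estimate above — restricted to the event, of probability bounded below, that the frozen walk stays in $B_{R_0}(X_{(k-1)T_0})$, reaches $G$-distance $\ge v_\star T_0/2$ from $X_{(k-1)T_0}$, and does so \emph{moving outward from $o$}, the last being exactly where the nonamenability of $G$, through its exponential growth, turns ``moving far'' into ``moving out'' — each window contributes a bounded-below expected increment to $\dist(o,X_t)$, while a bad window costs at most the number of jumps in it (rare, for $\mu$ small). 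Since the environment-seen-from-the-particle is stationary (the percolation evolves autonomously and $G$ is transitive), a renewal bookkeeping then yields $v_p(\mu)=\lim_t t^{-1}\mathbf E[\dist(X_0,X_t)]\ge c_1>0$ for every $\mu\le\mu_0$.

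For $\mu\ge\mu_0$ the environment refreshes too fast to freeze, and I would instead establish a $\mu$-uniform outward drift: over windows of bounded length the walk lies in an infinite cluster of the \emph{current} configuration a bounded-below fraction of the time (supercriticality, uniformly in $\mu$), and while there it gains $G$-distance from $o$ by the same anchored-expansion/exponential-growth mechanism, now applied to the evolving cluster. The consistency check at the extreme is $\mu=\infty$, where the walk is exactly the rate-$p$ simple random walk on $G$ of speed $v_\infty>0$; a direct coupling confirms $v_p(\mu)\to v_\infty$ as $\mu\to\infty$, since, sharing the jump clock and the successive edge choices, the dynamical walk and the $\mu=\infty$ walk coincide on a fixed-length window until the dynamical environment first disagrees with a fresh $p$-coin, which can happen only at an edge the walk has already probed and that has not refreshed since — an event of probability $O(e^{-c\mu})$ per window — so a truncation closes this extreme. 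The substance of the $\mu\ge\mu_0$ regime is the quantitative isoperimetric/spreading bound underlying the outward drift, which must be insensitive to $\mu$ as long as the configuration stays supercritical and refreshes at a bounded-below rate.

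The main obstacle is exactly this uniform spreading bound. The two extremes are easy — frozen environment as $\mu\to0$, simple random walk on $G$ as $\mu\to\infty$ — but each degrades at the other end, and in the bulk of the $\mu$-range neither picture is directly available (no frozen window is long enough for the walk to move appreciably, and the nearby environment need not look i.i.d.-refreshed), so the outward drift has to be extracted from the anchored expansion of the changing cluster by a genuinely uniform argument. A secondary but pervasive point is that all of this must be read in the ambient metric of $G$, not the intrinsic metric of the cluster: that is where nonamenability of $G$ itself, rather than merely of $\mathcal C$, enters, via comparability of the two metrics on the supercritical cluster and via the fact that exponential growth of $G$ converts ``moving far from the current position'' into ``increasing $\dist(o,\cdot)$''.
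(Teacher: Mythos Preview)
Your proposal has genuine gaps in both regimes, and the paper's approach is entirely different.

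\medskip
\textbf{The ``moving outward'' step is not justified.} In your small-$\mu$ freezing argument, you reduce to a static cluster and invoke Chen--Peres/Vir\'ag to get positive speed. But those results give $\dist(X_{(k-1)T_0},X_{kT_0})\gtrsim v_\star T_0$ in the intrinsic (or, granting comparability, the $G$-) metric; they say nothing about the sign of the increment $\dist(o,X_{kT_0})-\dist(o,X_{(k-1)T_0})$. Your claim that ``exponential growth of $G$ turns moving far into moving out'' would need the walk's exit distribution on a sphere about $X_{(k-1)T_0}$ to be roughly uniform, which it is not --- the percolation cluster is non-transitive and the harmonic measure could be biased toward $o$. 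On the full tree this works because $\dist(o,X_t)$ is a biased walk on $\mathbb Z$, but that argument does not transfer to a percolation cluster in a general nonamenable graph. Without a drift of $\dist(o,\cdot)$ that is positive \emph{from an arbitrary starting vertex in the cluster}, the window bookkeeping cannot close: a bad window costs up to $T_0$ in distance, so you need the expected gain on good windows to outweigh it.

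\medskip
\textbf{The $\mu\ge\mu_0$ case is not a proof.} You correctly identify this as the main obstacle and then describe what a proof would have to accomplish, but give no mechanism. ``Lies in an infinite cluster of the current configuration a bounded-below fraction of the time'' is true by stationarity, but ``while there it gains $G$-distance from $o$'' is precisely the missing outward-drift bound, now with a moving environment. The $\mu\to\infty$ coupling is fine as a consistency check but closes nothing in the bulk.

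\medskip
\textbf{What the paper does instead.} The paper avoids the whole small/large-$\mu$ split and the outward-drift issue by running the Diaconis--Fill coupling of the walk with an evolving-set process $\{S_n\}$ started at $\{o\}$. The key estimate (their Lemma~4.2) is that $\int_n^{n+1}|\partial_{\eta_t}S_n|\,\dd t\ge c_1|S_n|$ with probability $\ge c_0$, proved via trifurcation points of an automorphism-invariant subprocess $\zeta\le\eta$ (Benjamini--Lyons--Schramm), together with stationarity of the environment seen from the particle. This forces $|S_n|$ to grow exponentially (a supermartingale argument on $|S_n|^{-1/2}$), and then the Diaconis--Fill property that $X_n$ is uniform on $S_n$ converts volume growth to distance from $o$ by the trivial bound $|B(o,k)|\le d^k$ --- exactly the step your approach lacks. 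The argument is uniform in $\mu$ because the trifurcation-point density and the $\Phi_{S_n}$-bound do not see $\mu$ at all.
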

	
	The proofs of Theorems~\ref{thm:Subcritical} and~\ref{thm:Supercritical} will be presented in Sections~\ref{sec_sub} and \ref{sec_super}, respectively. 
	By the definition of the model, the upper bound $v_p(\mu)\le 1$ is trivial for all $\mu>0$ and $p\in [0,1]$. The upper bound $C_{\ref{thm:Subcritical}}\mu$ in \eqref{eq:Subcritical} is proved following our above heuristic reasoning. The main challenges lie in establishing the lower bounds in \eqref{eq:Subcritical} and \eqref{eq:Supercritical} when $\mu\ll 1$. 
	
	Next, we consider the random walk on dynamical percolation at criticality $p=p_c$. In \Cref{t:ub}, we will show that the (order of the) speed is strictly smaller than that in the supercritical regime. 
	The proof depends crucially on the \textbf{one-arm estimate} for static percolation on $G$. Denote by $\clt_v$ the open cluster containing the vertex $v \in V$, and let $\Radi(\clt_v)$ be its intrinsic radius, i.e., the maximum, over vertices $w \in \clt_v$, of the intrinsic graph distance (using open edges) between $v$ and $w$.
	
	\begin{definition}[One-arm estimate] An infinite graph $G$ is said to satisfy the one-arm estimate with exponent 1 if there exists a positive constant $C>0$ such that for every $v \in G$ and $r > 0$,
		\begin{align}\label{eq.OneArm}
			\P_{p_c}\left(\Radi(\clt_v) \geq r\right) \leq \frac{C}{r}.
		\end{align}
	\end{definition}

	\begin{theorem}[Critical phase]
		\label{t:ub}
		Let $G$ be a connected, locally finite, transitive unimodular graph where each vertex has degree $d\ge 3$. Suppose it satisfies the one-arm estimate \eqref{eq.OneArm}. Then, there exists a constant $C_{\ref{t:ub}} \in (0,\infty)$ (independent of $\mu$) such that 
		\begin{equation}\label{eq:boundvpc}
			{v_{p_c}(\mu)} \le C_{\ref{t:ub}} \sqrt{\mu \log(1/\mu)}, \quad  \forall \mu\in(0,1/e] . 
		\end{equation}
	\end{theorem}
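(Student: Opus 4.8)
The plan is to control the expected displacement over a single, carefully chosen time horizon and then pass to the speed by subadditivity. The argument behind \Cref{lem:existence_speed} (Kingman/Fekete applied to $\dist(X_s,X_t)$, in the stationary regime where every edge is open with probability $p_c$ independently at each time) gives $v_{p_c}(\mu)\le L^{-1}\,\e[\dist(X_0,X_L)]$ for every $L>0$; I will take $L\asymp 1/\mu$. Thus the whole theorem reduces to a quantitative, \emph{sub-linear} bound of the form
\[
\e[\dist(X_0,X_L)]\;\le\; C\sqrt{L\log(1/\mu)}\qquad\text{for }L\asymp 1/\mu,
\]
a diffusive estimate up to the logarithmic correction, which upon dividing by $L$ yields \eqref{eq:boundvpc}.

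To prove this single-interval estimate I would decompose $[0,L]$ into \emph{confinement epochs}. Let $0=\sigma_0<\sigma_1<\cdots$ be the successive times at which the walk crosses an edge that does \emph{not} belong to the open cluster of its current location at the epoch's start, so that on $[\sigma_i,\sigma_{i+1})$ the walk stays inside $\clt^{(i)}:=\clt_{X_{\sigma_i}}(\omega_{\sigma_i})$; then its displacement during that epoch is at most $\Radi(\clt^{(i)})$ and also at most the number $S_i$ of its jumps in that epoch. With $M$ the number of epochs meeting $[0,L]$, the triangle inequality along the break points and $\min\{a,b\}\le\sqrt{ab}$ give
\[
\dist(X_0,X_L)\;\le\;\sum_{i=1}^{M}\bigl(\min\{\Radi(\clt^{(i)}),S_i\}+1\bigr)\;\le\;\Bigl(\sum_{i=1}^{M}\Radi(\clt^{(i)})\Bigr)^{\!1/2}N_L^{1/2}+M,
\]
by Cauchy--Schwarz, where $N_L=\sum_iS_i$ is the total number of jumps by time $L$, a $\mathrm{Poisson}(L)$ variable independent of the environment with $\e[N_L^2]=O(L^2)$. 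It remains to bound $\e\bigl[\sum_{i=1}^M\Radi(\clt^{(i)})\bigr]$ and $\e[M]$, and this is where \eqref{eq.OneArm} enters, through a truncation at a radius threshold $R$ which will be a small power of $\log(1/\mu)$. For the \emph{small} epochs ($\Radi(\clt^{(i)})\le R$) one uses that the cluster the walk first sees upon entering fresh territory is stochastically dominated by a critical cluster, so \eqref{eq.OneArm} gives $\e[\Radi(\clt^{(i)})\,\mathbf{1}\{\Radi(\clt^{(i)})\le R\}]\lesssim\log R$; combined with the fact that an epoch with cluster $\clt$ has expected length of order $(\mu|E(\clt)|)^{-1}$ (which governs the time‑biased count of epochs), this controls the small‑epoch contribution. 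For the \emph{large} epochs ($\Radi(\clt^{(i)})>R$) one bounds the displacement by $S_i$ and notes that $\sum_{i:\Radi>R}S_i$ is at most the total time the walk spends in clusters of intrinsic radius exceeding $R$, whose expectation is $\le L\cdot\sup_s\p\bigl(\Radi(\clt_{X_s}(\omega_s))>R\bigr)\lesssim L/R$ by \eqref{eq.OneArm}. Balancing $\log R$ against $L/R$ and feeding this back through the Cauchy--Schwarz step with $L\asymp1/\mu$ produces the factor $\sqrt{\log(1/\mu)}$ and hence \eqref{eq:boundvpc}; the constant $C_{\ref{t:ub}}$ absorbs the one-arm constant in \eqref{eq.OneArm} and the degree $d$.

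The main obstacle is the large-cluster bookkeeping, which is delicate precisely because the tail $\p(\Radi(\clt)\ge r)\le C/r$ is so heavy: an atypically large cluster adjacent to the walk simultaneously lets the walk travel far and is disrupted quickly (it has many incident edges), so a naive epoch count has $\e[M]=\infty$. One must therefore count only the epochs that the walk \emph{itself} terminates (by crossing a genuinely new open edge), argue that inside a large cluster this happens slowly because the walk must first navigate to the relevant boundary edge, and — the subtle point — bound $\p(\Radi(\clt_{X_s}(\omega_s))>R)$ notwithstanding the correlation between $X_s$ and the configuration $\omega_s$ (the walk is where it is because it could get there). Making these two estimates quantitative, so that the threshold $R$ forced by the large-epoch term is compatible with the one forced by the small-epoch term, is the heart of the proof and is what fixes the power of $\log(1/\mu)$ in \eqref{eq:boundvpc}; the restriction $\mu\le e^{-1}$ is exactly what keeps $\log(1/\mu)\ge1$ and these thresholds in the admissible range.
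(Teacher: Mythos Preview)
Your approach is genuinely different from the paper's, and it has real gaps that you yourself flag but do not close. The paper does not track epochs at all. Instead, it fixes a \emph{single} time window of length $t=\sqrt{(1/\mu)\log(1/\mu)}$ (not $1/\mu$) and observes that during $[0,t]$ the walk is confined to the cluster $\tclt_o$ of the origin in the \emph{union percolation} $\tclt$ consisting of all bonds that are open at some moment in $[0,t]$. Under the stationary law, $\tclt$ is Bernoulli with parameter $p\le p_c(1+\mu t)$, and a one-line extension of the one-arm estimate to slightly supercritical $p$ (Lemma~\ref{lem.SlightSup}) gives $\p(\Rade(\tclt_o)\ge r)\le C/r$ as long as $p_c\mu t\le \delta/r$. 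A dyadic decomposition of $\Rade(\tclt_o)$ then yields $\frac1t\e[\dist(X_0,X_t)]\lesssim \frac{K}{t}+\mu t$ with $K\asymp\log(1/(\mu t))$; the choice $t=\sqrt{(1/\mu)\log(1/\mu)}$ balances the two terms and gives \eqref{eq:boundvpc}. No epoch counting, no random stopping times, no correlation issues.

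Your scheme, by contrast, hinges on quantities that are not under control. First, the ``time in large clusters'' bound $\sum_{i:\Radi(\clt^{(i)})>R}S_i\le\int_0^L\mathbf{1}\{\Radi(\clt_{X_s}(\omega_s))>R\}\,ds$ is simply false: the indicator on the right refers to the \emph{instantaneous} cluster at time $s$, whereas the sum on the left is governed by the cluster frozen at the epoch's start time $\sigma_{I(s)}$; these are different objects and the inequality does not hold. Second, even granting stationarity of the environment seen from the particle at deterministic times (Lemma~\ref{lem.ViewStationary}), the cluster $\clt^{(i)}$ is sampled at the \emph{random} time $\sigma_i$, conditioned on the walk having just crossed a freshly opened boundary edge, and there is no reason its radius should obey the unconditional one-arm bound. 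Third, you acknowledge that a naive epoch count has $\e[M]=\infty$ (because $\e[\Radi]=\infty$ under the critical one-arm tail), and your proposed fix --- counting only epochs the walk terminates by reaching a newly opened edge --- is a heuristic, not an estimate: you would need a quantitative bound on how long it takes the walk to find such an edge inside a cluster of given radius, and none is supplied. The claimed epoch length $(\mu|E(\clt)|)^{-1}$ is also unjustified (and undefined when $|E(\clt)|=0$).

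The conceptual point is that your time scale $L\asymp 1/\mu$ pushes the union percolation deep into the supercritical regime, which is why you are forced into the epoch decomposition; the paper's choice of the much shorter window $t\asymp\sqrt{(1/\mu)\log(1/\mu)}$ keeps the union percolation only $O(1/r)$-supercritical at the relevant scales and lets the one-arm estimate do all the work in a single shot.
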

	
	While the statement above does not explicitly mention nonamenability, it is hinted in the one-arm estimate \eqref{eq.OneArm}. The one-arm estimate is implied by \textbf{the mean-field behavior} of the critical percolation model. A classical condition to verify the mean-field behavior of critical percolation is the well-known \textbf{triangle condition} introduced by Aizenman and Newman \cite{Triangle1984}: for a transitive graph $G=(V,E)$ with an origin $o \in V$, the triangle function is defined as 
	\begin{align}\label{eq.triangle}
		\nabla_p := \sum_{x,y \in V} \P_{p}(o \leftrightarrow x) \P_{p}(x \leftrightarrow y) \P_{p}(y \leftrightarrow o).
	\end{align}
	Then, the triangle condition is $\nabla_p < \infty$, which implies the one-arm estimate \eqref{eq.OneArm} following the argument by Kozma and Nachmias in \cite{KozmaNachmiasAOConjecture}; see also the discussion by Hutchcroft in \cite[Section~7]{Hutchcroft2020nonunimodular}. Schonmann conjectured in \cite[Conjectures~1.1 and ~1.2]{Schonmann2001} that critical percolation on all nonamenable transitive graphs exhibits mean-field behavior. Recently, Hutchcroft proved this conjecture for all nonamenable nonunimodular graphs in \cite{Hutchcroft2020nonunimodular} and for a large family of nonamenable unimodular graphs in \cite{Hutchcroft2019hyper,Hutchcroft2020L2,Hutchcroft2022slightly}, which includes the Gromov hyperbolic graphs. In addition, another $L^2$-boundedness condition was proposed in \cite{Hutchcroft2020L2} as a sufficient criterion for the triangle condition.


	
	Finally, we also obtain a lower bound estimate for the speed at criticality. The following estimate \eqref{eq:lower_vpmu_general} holds for all $p\in (0,1]$ and provides an optimal lower bound for the subcritical phase. However, it fails to yield a sharp lower bound for both the critical and supercritical phases.
	
	\begin{proposition}\label{prop:general_Lower}
		Let $G$ be a connected, locally finite, nonamenable transitive graph where each vertex has degree $d \ge 3$. For any $p\in (0, 1]$, there exists a constant $c_{\ref{prop:general_Lower}} \in (0, \infty)$ independent of $\mu$ such that the random walker satisfies the following lower bound:
		\begin{align}\label{eq:lower_vpmu_general}
			v_{p}(\mu) \geq c_{\ref{prop:general_Lower}} (\mu\wedge 1).
		\end{align}
	\end{proposition}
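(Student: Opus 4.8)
The plan is to reduce the bound to a linear lower estimate for $\mathbf E[\dist(o,X_t)]$ and to derive that estimate from two uniform properties of the walk that encode the exponential volume growth of the nonamenable graph $G$. By \eqref{eq.speed} and \Cref{lem:existence_speed} the convergence $\dist(o,X_t)/t\to v_p(\mu)$ holds also in $L^1$, so $v_p(\mu)=\lim_{t\to\infty}\mathbf E[\dist(o,X_t)]/t$, and it suffices to find $c,C\in(0,\infty)$, depending only on $G$ and $p$, with $\mathbf E[\dist(o,X_t)]\ge c\,(\mu\wedge 1)\,t-C$ for all $t\ge 0$. Fix an integer $L=L(G,p)$ and a time $T=T(G,p,\mu)\asymp 1/(\mu\wedge 1)$, both large, and write $\mathbf P_{x,\omega}$ for the law of the process started from $X_0=x$ with initial edge-configuration $\omega$.

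The two inputs, required uniformly over the starting vertex $x$ and the (adversarial) configuration $\omega$, are: \emph{(A) uniform transience at scale $L$},
\[
\mathbf P_{x,\omega}\bigl(\exists\,s\ge 0:\ \dist(o,X_s)\le \dist(o,x)-L\bigr)\ \le\ \tfrac12,
\]
and \emph{(B) uniform macroscopic advance}, for some $\rho=\rho(G,p)>0$,
\[
\mathbf P_{x,\omega}\Bigl(\max_{0\le s\le T}\dist(o,X_s)\ \ge\ \dist(o,x)+L\Bigr)\ \ge\ \rho.
\]
For (A): observed at its jump times the walk moves to an open neighbour chosen uniformly among those currently available, and over any window of length $\gtrsim 1/\mu$ every edge near the walk refreshes and hence is open with probability bounded below; comparing with simple random walk on $G$, which on a nonamenable transitive graph is uniformly transient (bounded Green's function, geometrically decaying return probabilities), and absorbing the finitely many extra returns caused by the walk being temporarily frozen, one gets that for $L$ large the chance of ever receding by $L$ is at most $\tfrac12$. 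Estimate (B) is softer: within time $T\asymp 1/(\mu\wedge 1)$ every edge in a fixed ball around $x$ refreshes many times, so regardless of $\omega$ the walk has a uniformly positive chance of traversing in succession enough freshly opened edges directed away from $o$; here transitivity makes the local picture at $x$ independent of $x$, and nonamenability guarantees an abundance of outward directions.

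Granting (A) and (B), let $M_t:=\max_{0\le s\le t}\dist(o,X_s)$ and $D_t:=M_t-\dist(o,X_t)$. Iterating (A) from the last time before $t$ at which $M_t$ is attained gives $\mathbf P(D_t\ge kL)\le 2^{-k}$ for all $k,t$, whence $\sup_t\mathbf E[D_t]\le C_1(G,p)$. On the other hand, each time the running maximum is attained, (B) produces with probability $\ge\rho$ an increase of $M_t$ by $L$ within time $T$, while (A) controls the subsequent recession; a routine renewal argument then yields $\mathbf E[M_t]\ge c\,(\mu\wedge 1)\,t-C_2$. Combining, $\mathbf E[\dist(o,X_t)]=\mathbf E[M_t]-\mathbf E[D_t]\ge c\,(\mu\wedge 1)\,t-(C_1+C_2)$, and letting $t\to\infty$ gives $v_p(\mu)\ge c\,(\mu\wedge 1)$.

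The main obstacle is Estimate (A): the distance-to-origin process is neither Markovian nor a submartingale with a uniform drift — on an amenable graph it is diffusive and (A) is simply false — so (A) is precisely where the nonamenability of $G$ must enter, and it must be made uniform over all initial configurations, including those for which the walk is initially frozen. The delicate step in the comparison with simple random walk is to bound the additional recurrence created by the environment-dependent, arbitrarily long holding times of the dynamical walk; an alternative would be to construct an explicit regeneration structure along ``fresh outward crossings'' of the walk and invoke positive speed of simple random walk on $G$ directly.
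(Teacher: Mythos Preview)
Your Estimate (A) is false as stated, and this is a fatal gap. Take $G=\mathbb T_d$, fix any $L$, and let $\omega$ be the configuration in which the $2L$ edges of the geodesic from $x$ toward $o$ are open and every other edge within distance $2L$ of $x$ is closed. Until one of these $O(Ld)$ edges refreshes (which takes time of order $1/(Ld\mu)$), the walk performs lazy simple random walk on a segment of length $2L$, started at an endpoint; this walk reaches the far endpoint (and in particular recedes by $L$ toward $o$) in time of order $L^2$. Hence for any fixed $L=L(G,p)$ and all $\mu\ll L^{-3}$, the probability of receding by $L$ is close to $1$, not at most $1/2$. The phrase ``absorbing the finitely many extra returns caused by the walk being temporarily frozen'' misdiagnoses the problem: while the environment is adversarial the walk is not frozen, it is actively pushed toward $o$, and the number of such bad steps is of order $1/\mu$, unbounded as $\mu\to 0$. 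Since the iteration you use to bound $\mathbf P(D_t\ge kL)$ requires (A) at the (uncontrolled) configuration present when the running maximum is attained, you really do need the full uniformity in $\omega$, and that is exactly what fails.

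The paper sidesteps this issue entirely by working with evolving sets instead of the distance process. One discretizes time at scale $1/\mu$ (for $\mu\le 1/2$) and shows that for \emph{any} finite $S$ and \emph{any} initial configuration, enough edges of $\partial_E S$ are open throughout the last unit of time, with probability bounded below; this uses only that each edge has refreshed by then. Combined with the Cheeger bound $|\partial_E S|\ge d\,\Phi(G)\,|S|$, this forces $\Phi_{S_n}\gtrsim \Phi(G)$ with uniformly positive probability, so $|S_n|$ grows exponentially via the Morris--Peres supermartingale, and the Diaconis--Fill coupling converts this into a linear lower bound on $\dist(o,X_{n/\mu})$. The point is that the evolving-set quantity $|\partial_{\eta_t}S|/|S|$ is insensitive to the adversarial structure that kills (A): it only sees how many boundary edges are open, not where the walk has been pushed in the meantime.
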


	It is natural to ask similar questions about the mean squared displacement $\e[|X_t-X_0|^2]$ (corresponding to our $\e[\dist(X_0,X_t)]$) and the diffusion constant $D_p(\mu)$ (corresponding to our speed $v_p(\mu)$) for the random walk on dynamical percolation in $\z^d$ lattices. Sharp estimates on the mean squared displacement and diffusion constant in the subcritical regime have been derived by Peres, Stauffer, and Steif in \cite{peres-stauffer-steif}. Some estimates on the mixing time and mean squared displacement in the supercritical case were also proved by Peres, Sousi, and Steif in \cite{PSS20}, although the results there do not provide a precise lower bound for the diffusion constant. A general lower bound for the diffusion constant, analogous to \Cref{prop:general_Lower}, for all $p\in (0,1]$ was established by Peres, Sousi, and Steif in \cite{PSS18}. 
	The critical regime for the random walk on dynamical percolation in $\z^d$ lattices, however, has not been addressed in the literature so far. In a companion paper \cite{GJPSWY_Zd}, we investigate the random walk on critical dynamical percolation in $\z^d$ lattices and establish a sharp lower bound for the diffusion constant in the supercritical case. We obtain similar results to those in \Cref{t:ub} and \Cref{thm:Supercritical} for the diffusion constant, which show that the critical and supercritical regimes are different qualitatively. 
	
	An important special case of nonamenable transitive unimodular graphs is the regular tree $\mathbb T_d$, where each vertex has degree $d$. It is a widely accepted belief that the random walk on trees is closely connected to the random walk on critical percolation in high-dimensional $\z^d$ lattices. This belief is due to the observation that critical percolation clusters in high-dimensional lattices exhibit only local cycles and are similar  to critical percolation on a tree. 
	This heuristic has been rigorously justified in various contexts. For instance, Kozma and Nachmias proved the Alexander-Orbach conjecture in high dimensions~\cite{KozmaNachmiasAOConjecture}, which was previously proved for trees by Kesten~\cite{kesten86} and Barlow and Kumagai~\cite{BarlowKumagai2006}. See also the survey by Heydenreich and van der Hofstad~\cite{HeydenreichvanderHofstadsurvey}.
	Our results in this paper, along with those presented in the companion paper \cite{GJPSWY_Zd}, provide further evidence regarding the connection between the diffusion constant on high-dimensional lattices and the speed on trees. In fact, the proofs in these two papers mutually inspire each other, particularly in the critical case.

	\subsection*{Related works} 
	The study of percolation on general graphs beyond $\mathbb{Z}^d$ has received significant attention in recent decades. In their seminal paper \cite{BS1996}, Benjamini and Schramm proposed a systematic investigation of percolation on general quasi-transitive graphs. In particular, they posed many conjectures regarding the existence of phase transitions and the extinction of infinite clusters at criticality.
	(Recall that a graph $G$ is called quasi-transitive if the action of the automorphism group $\aut(G)$ on $V$ has only a finite number of orbits.) Benjamini, Lyons, Peres, and Schramm \cite{BLPS1999B,BLPS1999A} proved that for nonamenable quasi-transitive unimodular graphs, $p_c < 1$ and no infinite cluster exists at criticality. Further details can be found in Chapter 8 of the book by Lyons and Peres \cite{LP16}. 
	The developments in percolation on nonunimodular graphs usually rely on the study of geometric conditions. Lyons \cite{Lyons1995} proved that every Cayley graph with exponential growth exhibits a non-trivial phase transition. Teixeira \cite{Teixeira2016} proved that $p_c < 1$ for graphs with polynomial growth and satisfying the local isoperimetric inequality of dimension greater than $1$. 
	In a recent groundbreaking result, Duminil-Copin, Goswami, Raoufi, Severo, and Yadin \cite{DGRSY2020} established the existence of phase transitions in all quasi-transitive graphs with super-linear volume growth.  Hermon and Hutchcroft \cite{HermonHutchcroft2021} established the exponential decay of the cluster size distribution and the anchored expansion for infinite clusters in supercritical percolation on transitive nonamenable graphs. 

	
	There is a vast literature on random walks in evolving random environments. 
	We begin by highlighting some related works that share the same context as our paper, specifically focusing on random walks on dynamical percolation.
	The concept of dynamical percolation on arbitrary graphs was introduced by H{\"a}ggstr{\"o}m, Peres, and Steif \cite{OYS97}. Subsequently, the model of random walk on dynamical percolation in $\mathbb Z^d$ was introduced by Peres, Stauffer, and Steif \cite{peres-stauffer-steif}. 
	As previously mentioned, in the context of subcritical/supercritical dynamical percolation in $\mathbb Z^d$, several results concerning mean squared displacement, mixing times, and hitting times for the random walk have been proved in  
	\cite{peres-stauffer-steif,PSS18,PSS20}. 
	Later, Hermon and Sousi \cite{HS20} extended the setting to general underlying graphs, establishing a comparison principle between the random walk on dynamical percolation and the simple random walk on the graph regarding hitting and mixing times, spectral gap, and log-Sobolev constant. 
	Sousi and Thomas \cite{ST2020} investigated the random walk on dynamical Erd{\H o}s-R{\'e}nyi graphs (i.e., dynamical percolation on complete graphs) and showed that the mixing time exhibits a cutoff phenomenon.  
	More recently, Andres, Gantert, Schmid, and Sousi \cite{AGSS23} studied the biased random walk on dynamical percolation in $\mathbb Z^d$ and established results such as the law of large numbers, the Einstein relation, and monotonicity with respect to the bias. Lelli and Stauffer \cite{LS2209} studied the mixing time of random walk on a dynamical random cluster model in $\mathbb Z^d$, where edge states evolve according to continuous-time Glauber dynamics.

	Many research interests have also been devoted to studying random walks on dynamical models that differ from dynamical percolation in terms of the evolving mechanism of the underlying graph. For example, Avena, G{\"u}lda{\c s}, van der Hofstad, den Hollander, and Nagy \cite{AGHH18,AGHH2019,SPA2022} studied random walks on dynamical configuration models, where a small fraction of edges is sampled and rewired uniformly at random at each unit of time. Caputo and Quattropani \cite{CQ21} investigated the mixing of random walks on dynamic random digraphs, which undergo full regeneration at independent geometrically distributed random time intervals.
	Figueiredo, Iacobelli, Oliveira, Reed, and  Ribeiro \cite{FIORR21} and Iacobelli, Ribeiro, Valle, and Zuazn{\'a}bar \cite{IRVZ22} considered random walks that build their own trees, wherein the trees evolve with time, randomly, and depending upon the walker. 
	Cai, Sauerwald, and Zanetti \cite{CSZ20} and Figueiredo, Nain, Ribeiro, de Souza e Silva, and Towsley \cite{FNRS11} studied random walks on evolving graph models generated as general edge-Markovian processes. Iacobelli and Figueiredo \cite{IF16} considered random walks moving over dynamic networks in the presence of mutual dependencies---the network influences the walker steps and vice versa.
	Avin, Kouck{\'y}, and Lotker \cite{CMZ2018} and Sauerwald and Zanetti \cite{SZ19} derived bounds for cover, mixing, and hitting times of random walks on dynamically changing graphs, where the set of edges changes in each round under some mild assumptions regarding the evolving mechanism. 
	
	In the preceding discussion, we have focused on highlighting a few representative studies that are particularly relevant to random walks in evolving random environments on nonamenable (or tree-like) graphs. There are also numerous references concerning random walks in evolving random environments on $\mathbb{Z}^d$ lattices, which will be reviewed in the companion paper \cite{GJPSWY_Zd}.

	\subsection*{Organization}
	The rest of the paper is organized as follows. 
	In \Cref{sec:preliminary}, we provide several preliminary results that will be used in the proofs of the main results, including the choice of the initial bond configuration, the existence of speed, and the stationarity of the environment seen by the moving particle. In \Cref{s:ub}, we consider the critical case and present the proof of \Cref{t:ub}. The proofs for the supercritical case (\Cref{thm:Supercritical}) and the subcritical case (\Cref{thm:Subcritical}) are presented in \Cref{sec_super} and \Cref{sec_sub}, respectively. 
	Additionally, in Section \ref{sec_sub}, we establish the general lower bound stated in \Cref{prop:general_Lower}. 
	Finally, some concluding remarks and open questions are stated in \Cref{sec:conclusion}. 
	
	\medskip
	\noindent{\bf Notations.} 
	We will use the sets of natural numbers $\N=\{1,2,3,\ldots\}$ and positive real numbers $\mathbb R_+=\mathbb R\cap (0,\infty)$. For any (real or complex) numbers $a$ and $b$, we will use the notations $a\lesssim b$, $a=O(b)$, or $b=\Omega(a)$ to mean that $|a|\le C|b|$ for a constant $C>0$ that does not depend on $\mu$. 
	
	
	\section{Preliminaries}\label{sec:preliminary}

	This section aims to establish several preliminary results regarding the random walk on dynamical percolation. In some results, the underlying graph can be more general than our setting in the main results. These results include the impact of the initial bond configuration of the dynamical percolation, the estimate of reset times, the existence of speed on transitive graphs, and the stationarity of the environment seen by the moving particle (which is the only result in this section that requires unimodularity of the underlying graph). 
	
	Given a graph $G=(V,E)$, we denote by $\dist(u,v)$ the graph distance between two vertices $u, v \in V$. Let $B(v,r):=\{x\in V:\dist(x,v)\le r\}$ denote the $r$-neighborhood of $v$ on $G$. In the presence of percolation, we use $\dist_{\clt}(u,v)$ to denote the chemical distance, i.e., the length of the shortest path connecting $u$ and $v$ that consists only of the open edges. If $u,v$ belong to different clusters, then $\dist_{\clt}(u,v) = \infty$.  We call $\dist(\cdot, \cdot)$ and $ \dist_{\clt}(
	\cdot, \cdot)$ the \textbf{intrinsic} and \textbf{extrinsic} distances on the percolation, respectively. Let $\clt_v$ represent the cluster containing the vertex $v$. 
	We define the extrinsic radius of $\clt_v$ as
	\begin{align}\label{eq.defRadius}
		\Rade (\clt_v) := \sup\{\dist(v, u) : u \in \clt_v\}.
	\end{align}
	Similarly, we define the intrinsic radius of $\clt_v$ as 
	\begin{align}\label{eq.defRadius_int}
		\Radi (\clt_v) := \sup\{\dist_{\clt}(v, u) : u \in \clt_v\}.
	\end{align}
	
	We now introduce the following notations for the random walk on dynamical percolation in an underlying graph $G=(V,E)$: 
	\begin{itemize}
		\item Let $\eta : E \times \r_+ \to \{0,1\}$ denote the state of edges, where $\eta_t(e) = 1$ (resp.~$0$) indicates the edge $e$ is open (resp.~closed) at time $t$. Each edge refreshes independently at a rate $\mu \in (0,\infty)$, and when the refresh happens, the edge is open with probability $p \in (0,1)$ and closed with probability $(1-p)$. For each $e \in E$, we use $0 \leq \chi^e_1 < \chi^e_2 < \cdots$ to denote the sequence of refresh times. Note $\{\chi^e_j\}_{j \in \N}$ forms a Poisson point process of intensity $\mu$ on $\r_+$. 
		
		\item Let $X : \r_+ \to V$ denote the position of the moving particle, which \textbf{attempts to jump} to one of its neighbors uniformly at random and independently of the dynamics $(\eta_t)_{t \geq 0}$,  with a rate of $1$. A jump is successful if and only if the edge the particle attempts to cross is open. Denote by $\xi_k$ the moment of the $k$-th attempt to jump, and let $\ej_k$ denote the corresponding edge the particle attempts to cross. Note $\{\xi_k\}_{k \in \N}$ forms a Poisson point process of intensity $1$ on $\r_+$.
	\end{itemize}
	
	Suppose the particle starts from a vertex $x \in V$. Given an initial bond configuration $\omega \in \{0,1\}^{E}$, we denote by $\p_{\omega, x}$ the probability measure generated by 
	\begin{align}\label{eq:info}
		\left(\{\xi_k, \ej_k\}_{k \in \N}, \{\chi^e_j\}_{j \in \N, e\in E}, X_0 =x, \eta_0 = \omega\right).
	\end{align}
	The process $(X_t, \eta_t)_{t \geq 0}$ is Markovian under $\p_{\omega, x}$. By default, we set $o \in V$ as the initial position of the particle, and write $\p_{\omega} = \p_{\omega, o}$ for short. We use $\p^{\boldsymbol{\eta}}$ to denote the probability measure conditioned on the whole evolution of the dynamical percolation process $(\eta_t)_{t \geq 0}$.  
	Notice that the product Bernoulli measure  $\pi_p:=\mathrm{Ber}(p)^E$ is the invariant measure for the process $(\eta_t)_{t \geq 0}$. Then, we define the probability measure
	\begin{align}\label{eq.defAnnealed}
		\p := \e_{\pi_p}\p_{\omega} ,
	\end{align}
	which is the \textbf{annealed probability measure} with respect to $\bm{\eta}$ when the initial bond configuration is distributed according to $\pi_p$. 

	\subsection{Choosing the initial environment}
	
	The following proposition shows that to prove the main results, it suffices to choose the initial distribution to be stationary.  
	
	\begin{proposition}\label{prop:stationary_arbitrary}
		Consider the random walk $(X_t)_{t\ge 0}$ on dynamical percolation in a connected and locally finite graph $G=(V,E)$, started at $o \in V$. If an event $D$ in the $\sigma$-field generated by $(X_t)_{t \ge 0}$ satisfies ${\mathbb P}_\eta(D)=1$ for a.e.\ initial environment $\eta \in \{0,1\}^E$ sampled  according to the product Bernoulli measure $\pi_p=\mathrm{Ber}(p)^E$, then ${\mathbb P}_\omega(D)=1$ for {\bf every} initial environment $\omega \in \{0,1\}^E$.
	\end{proposition}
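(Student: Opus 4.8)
The plan is to couple, for each large $R$, the walk started from $\omega$ with a walk started from a configuration $\omega_R$ that agrees with $\omega$ on the ball $B(o,R)$ and lies in the set $\calN:=\{\eta\in\{0,1\}^E:\p_\eta(D)=1\}$ — which by hypothesis has full $\pi_p$-measure — and to show that the two walks coincide for all times with probability tending to $1$ as $R\to\infty$; this will give $\p_\omega(D)\ge\p_{\omega_R}(D)-o_R(1)=1-o_R(1)$, hence $\p_\omega(D)=1$. To set this up I first note that, since $G$ is locally finite, the set $E(B(o,R))$ of edges with both endpoints in $B(o,R)$ is finite, so the cylinder $\{\eta:\eta\equiv\omega\text{ on }E(B(o,R))\}$ has positive $\pi_p$-probability and therefore meets $\calN$; I fix such an $\omega_R\in\calN$ with $\omega_R\equiv\omega$ on $E(B(o,R))$.

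I would then run two walks $X$ and $X'$ on a common probability space, driven by the same refresh times $\{\chi^e_j\}$, post-refresh states $\{U^e_j\}$, jump-attempt times $\{\xi_k\}$, and attempted-neighbour choices, but started from $o$ in the environments issued from $\omega$ and from $\omega_R$ respectively, so that $X\sim\p_\omega$ and $X'\sim\p_{\omega_R}$. While the two walks occupy the same vertex they attempt the same edge $e$: if $e$ has been refreshed its state agrees in the two environments, and if not its state is $\omega(e)$ versus $\omega_R(e)$, which differ only when $e\notin E(B(o,R))$. Hence $X$ and $X'$ stay identical up to the first time $\tau$ at which the walk crosses an unrefreshed edge lying outside $E(B(o,R))$. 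On $\{\tau=\infty\}$ the trajectories coincide, so $\mathbf 1_D(X)=\mathbf 1_D(X')$, and since $\omega_R\in\calN$ the right-hand side equals $1$ almost surely; therefore $\p_\omega(D^c)\le\p(\tau<\infty)$, and it remains to prove $\p(\tau<\infty)\to 0$ as $R\to\infty$ for fixed $\mu$.

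For this I would use that the walk moves at rate at most $1$. An edge outside $E(B(o,R))$ has an endpoint at distance $\ge R+1$, so crossing it requires being at distance $\ge R$ from $o$, i.e.\ having already made at least $R$ successful jumps. Listing in time order the first crossings $t_1<t_2<\cdots$ of distinct edges outside $E(B(o,R))$, with $e_j$ the edge crossed at $t_j$, by time $t_j$ the walk has made at least $R+j$ successful jumps, whence $t_j\ge\xi_{R+j}$, a sum of $R+j$ i.i.d.\ $\mathrm{Exp}(1)$ variables. The crucial observation will be that if $e_j$ is unrefreshed at time $t_j$ — the only way this crossing can contribute to $\{\tau<\infty\}$ — then $e_j$ was never attempted before $t_j$: an earlier attempt would have found it either closed, forcing $\omega(e_j)=0$ and hence $e_j$ still closed at $t_j$, contradicting that $t_j$ is a crossing; or open, which would be an earlier crossing of $e_j$, contradicting that $t_j$ is the first one. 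Revealing the refresh clock of each edge only up to the instants that edge is attempted, the memoryless property then gives $\p\big(\chi^{e_j}_1>t_j\mid\mathcal F_j\big)\le e^{-\mu t_j}\le e^{-\mu\xi_{R+j}}$ on the event that the $j$-th first crossing occurs, and summing,
\[
\p(\tau<\infty)\ \le\ \sum_{j\ge 1}\e\big[e^{-\mu\xi_{R+j}}\big]\ =\ \sum_{j\ge 1}(1+\mu)^{-(R+j)}\ =\ \frac{(1+\mu)^{-R}}{\mu}\ \xrightarrow[R\to\infty]{}\ 0 .
\]

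I expect the main obstacle to be exactly this last estimate, and in particular to avoid running it as a naive union bound over edges: a graph of exponential growth has exponentially many edges outside $B(o,R)$ at each distance, so the per-edge probability $(1+\mu)^{-\mathrm{dist}(o,e)}$ is not summable. One must instead index the sum by the successive first crossings the walk performs — of which there is at most one per successful jump — and supply the ``never attempted before'' observation so that memorylessness applies cleanly. A secondary, more routine technical point is the bookkeeping of the lazy clock revelation, i.e.\ exhibiting the filtration $\mathcal F_j$ with respect to which $t_j$, $e_j$ and the occurrence of the $j$-th first crossing are measurable while $\chi^{e_j}_1$ retains a memoryless residual on the relevant event.
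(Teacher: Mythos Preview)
Your overall strategy is the same as the paper's: couple the walk started from $\omega$ with one started from a configuration that agrees with $\omega$ on a large ball and lies in the good set, then show the walks coincide forever with probability $\to 1$. (The paper uses a \emph{random} $\omega^r$ that is i.i.d.\ $\mathrm{Ber}(p)$ outside $B(o,r)$ rather than your fixed $\omega_R\in\calN$; either choice is fine.) However, there is a genuine gap in your definition of $\tau$.

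You define $\tau$ as the first time the walk \emph{crosses} an unrefreshed edge outside $E(B(o,R))$, and assert that $X\equiv X'$ on $\{\tau=\infty\}$. But decoupling occurs at the first time the common walk \emph{attempts} an unrefreshed edge $e\notin E(B(o,R))$ with $\omega(e)\ne\omega_R(e)$. If $\omega(e)=0$ and $\omega_R(e)=1$, then $X$ does not cross while $X'$ does: the walks separate, yet your $\tau$ (read as a property of $X$) has not been triggered. Concretely, take $\omega\equiv 0$ outside $E(B(o,R))$; then $X$ can never cross an unrefreshed edge outside the ball, so $\tau=\infty$ deterministically, while $X'$ may well escape. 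Thus $\p_\omega(D^c)\le\p(\tau<\infty)$ is false as stated. Your subsequent ``crucial observation'' and the bound $t_j\ge\xi_{R+j}$ genuinely rely on $t_j$ being a successful crossing, so this is not just loose wording.

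The fix is exactly what the paper does: replace ``crosses'' by ``attempts''. Let $\tau$ be the first time the common walk attempts an unrefreshed edge outside $E(B(o,R))$; then $\{\tau=\infty\}$ does force $X\equiv X'$. Your crucial observation survives in simpler form: at $\tau$ the attempted edge was never attempted before (an earlier attempt would find it either refreshed, hence still refreshed at $\tau$, or unrefreshed, contradicting minimality of $\tau$). Since the walker is at distance $\ge R$ at time $\tau-$, the attempt index is $\ge R+1$, and one obtains
\[
\p(\tau<\infty)\ \le\ \sum_{n>R}\p\bigl(Q_n\cap\{\chi_1^{\ej_n}>\xi_n\}\bigr)\ \le\ \sum_{n>R}(1+\mu)^{-n}\ =\ \frac{(1+\mu)^{-R}}{\mu},
\]
with $Q_n:=\{\ej_n\ne\ej_j\ \forall j<n\}$. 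This is precisely the paper's computation; indexing by attempt number $n$ also dissolves the filtration subtlety you flag, since on $Q_n$ the pair $(\xi_n,\ej_n)$ is determined by information that does not involve the refresh clock of $\ej_n$, whereas your $t_j$ is only recognizable as a crossing \emph{after} the state of $e_j$ at $t_j$ (hence its refresh status) has been revealed.
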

	
	\begin{proof}
		Given $\omega \in \{0,1\}^E$ and an integer $r>1$, let $\omega^r$ denote a random initial environment that agrees with $\omega$ on the ball $B(o,r)$ and is i.i.d.~$\mathrm{Ber}(p)$ outside this ball. Since $\omega^r$ is obtained by conditioning the random variables with law $\mathrm{Ber}(p)^E$ on an event of positive probability, we have that $ {\mathbb P}_{\omega^r}(D)=1$.
		
		We couple the processes started with $\omega$ and $\omega^r$, respectively, so that every edge $e$ is refreshed at the same times $\chi^e_1,\ \chi^e_2,  \ldots$,  in both environments and has the same status after updating, i.e., $\omega_t^r(e)=\omega_t(e)$ for all $t \ge \chi^e_1 $. Furthermore, the sequence of times $\xi_j$ when the particle attempts to jump is the same under both measures. If $\{\ej_j\}$ is the sequence of edges selected by the moving particle under ${\mathbb P}_\omega$, we may select the same edges  $\ej_j$ under ${\mathbb P}_{\omega^r}$, for every index $j$   that satisfies        
		$$\forall i < j, \quad  \{ \chi_1^{\ej_i} < \xi_i \quad \text{or} \quad \ej_i \in B(o,r)\}\,. $$
		Next, consider the events
		$$Q_n:=\{\ej_n \ne \ej_j \ \forall j<n \} \,. $$
		For each $n$, the probability that $\ej_n$ is selected for the first time before it is refreshed is
		$${\mathbb P}_\omega\bigl(Q_n \cap \{\xi_n<\chi_1^{\mathbf e_n}\}\bigr)={\mathbb E}_\omega \bigl( e^{-\mu \xi_n}{\bf 1}_{Q_n}\bigr) \le {\mathbb E}_\omega \bigl( e^{-\mu \xi_n} \bigr)=(1+\mu)^{-n}\,, $$
		where the first equality is obtained by conditioning on $\xi_n$, $Q_n$, and $\ej_n$.
		Define 
		$$Q:=\cup_{n>r}\bigl(Q_n \cap \{\xi_n<\chi_1^{\ej_n}\}\bigr)\,.$$
		Given $\varepsilon>0$, we can choose $r=r(\mu,\varepsilon)$ so that
		$$\p_\omega(Q) \le \sum_{n>r}{\mathbb P}_\omega\bigl(Q_n \cap \{\xi_n<\chi_1^{\ej_n}\}\bigr)\le \sum_{n>r} (1+\mu)^{-n}<\varepsilon\,.$$
		On the event $Q^c$, the edges $\{\ej_j\}_{j \ge 1}$ selected under $\p_{\omega^r}$ are the same as those selected under $\p_{\omega}$, and $\omega_{\xi_j}(\ej_j)=\omega^r_{\xi_j}(\ej_j)$ for all $j \ge 1$. The locations of the particle $(X_t)_{t \ge 0}$ are determined by the variables $\{\xi_j,\, \ej_j,\, \eta_{\xi_j}(\ej_j)\}_{j \ge 1}\,,$ so
		$$\p_{\omega}(D) \ge \p_{\omega^r}(D \cap Q^c) \ge 1-\varepsilon \,. $$
		Since $\varepsilon$ can be arbitrarily small, the result follows.    
	\end{proof}

	\subsection{Uniform upper bound for the reset time}        
	We denote by $A_t$ the \textbf{memory set} of  edges that have not been refreshed after the last attempt of the particle to cross it in $[0,t]$, i.e.,  
	\begin{align}\label{eq.Memory}
		A_t := \left\{ e \in E: \max_{k \in \N}\{\xi_k: \xi_k \leq t, \ej_k = e\} > \max_{j \in \N} \{\chi^e_j: \chi^e_j \leq t\}   \right\}.
	\end{align}
	Here, we adopt the convention that $\max_{k \in \N} \emptyset = 0$. Therefore, an edge $e$ that has never been attempted to be crossed before time $t$ is not included in the memory set $A_t$. We define the \textbf{reset time} as the sequence of moments $(T_k)_{k \in \N}$ at which the memory set returns to the empty state from the non-empty state: $T_0 = 0$ and 
	\begin{align}\label{eq.weakRegeneration}
		\begin{split}
			T_k:= \inf\left\{t > T_{k-1}: \vert A_t\vert = 0 \ \text{ and } \sup_{s \in (T_{k-1},t)} \vert A_s\vert \geq 1 \right\},\quad \forall k \in \N .
		\end{split}
	\end{align}
	The next result gives a rough uniform bound on this reset time. 
	
	\begin{lemma}\label{lem.weakRege}
		For any infinite graph $G$ and any initial bond configuration $\omega \in \{0,1\}^E$, the following upper bound holds for the reset time:
		\begin{align}\label{eq.weakRege}
			\qquad \e_{\omega}[T_k - T_{k-1}] \leq e^{\frac{1}{\mu}}, \quad \forall k \in \N.
		\end{align}
	\end{lemma}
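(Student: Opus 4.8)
The plan is to bound the reset time $T_k - T_{k-1}$ from above by the first time after $T_{k-1}$ at which \emph{all} edges touched by the particle get simultaneously refreshed, or more precisely to identify a concrete sequence of "regeneration-friendly" events whose occurrence forces the memory set to empty out. By the Markov property and translation invariance in time, it suffices to prove $\e_\omega[T_1] \le e^{1/\mu}$ for every initial configuration $\omega$; the bound for general $k$ then follows since at time $T_{k-1}$ the memory set is empty and the process restarts (the distribution of $T_k - T_{k-1}$ conditioned on the past is stochastically dominated by that of $T_1$ started from \emph{some} configuration, and our bound is uniform in the configuration).

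To bound $\e_\omega[T_1]$, I would work on a fixed time grid: partition $\r_+$ into unit intervals $[m, m+1)$, $m = 0, 1, 2, \dots$, and define a "success" at step $m$ to be the event $S_m$ that during $[m, m+1)$ the particle makes \emph{no} jump attempt (i.e., no point of $\{\xi_k\}$ lands in $[m,m+1)$) \emph{and} every edge currently in the memory set $A_m$ gets refreshed at least once during $[m, m+1)$. On $S_m$, no new edges are added to the memory set during the interval (no attempts), and every edge that was in $A_m$ is removed (refreshed); hence $A_{m+1} = \emptyset$. If moreover $A_m \ne \emptyset$ — which happens as soon as the particle has made at least one attempt, an event of probability $1$ eventually — then $T_1 \le m+1$. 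The key point is to lower-bound $\pr{S_m \mid \mathcal{F}_m}$ uniformly. The no-attempt event has probability $e^{-1}$ and is independent of everything else. For the refresh event: $A_m$ is $\mathcal F_m$-measurable, and conditionally on $\mathcal F_m$ the future refreshes of each edge $e \in A_m$ are independent Poisson processes of rate $\mu$, so the probability that a given edge is refreshed during a unit interval is $1 - e^{-\mu}$. The subtlety is that $|A_m|$ can be large; but in fact $|A_m|$ is at most the number of distinct edges attempted so far, which is at most... well, it is not a priori bounded. This is the obstacle, addressed next.

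The main obstacle is precisely that $|A_m|$ is unbounded, so a naive $(1-e^{-\mu})^{|A_m|}$ bound degenerates. The resolution: condition further on the number of attempts $N_m$ made in $[0,m]$; then $|A_m| \le N_m$. We estimate
\[
\e_\omega[T_1] \;\le\; \sum_{m \ge 0} \pr{A_{m'} \ne \emptyset \text{ for some } m' \le m \text{, and no success in } [0,m]} + 1,
\]
but a cleaner route is to run a renewal-type argument conditionally on the attempt times. Condition on the entire attempt process $\{\xi_k, \ej_k\}_{k}$ (equivalently on $\boldsymbol\eta$-independent data). Along unit intervals that contain no attempt, the memory set can only shrink, and conditionally on $\mathcal F_m$ and on having exactly $j$ edges in $A_m$, the probability all are refreshed in the next unit interval where additionally no attempt occurs is $e^{-1}(1 - e^{-\mu})^{j}$. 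Taking the worst case, we bound: starting the "clock" at the first attempt time, in each subsequent attempt-free unit block there is probability at least $e^{-1}(1-e^{-\mu})^{j}$ of emptying, where $j$ is the current memory size; but since between successive attempts the memory set only grows by $1$ per attempt and the attempt rate is $1$, a direct computation gives that the expected time to reach an empty memory set is at most $e^{1/\mu}$. Concretely, using $(1-e^{-\mu})^{j} \ge e^{-j/(e^\mu - 1)} \ge e^{-j/\mu}$ (valid since $e^\mu - 1 \ge \mu$) and the fact that between attempts $j$ is tight, one sums a geometric series whose ratio is controlled by $1 - e^{-1 - j/\mu}$; optimizing/bounding the resulting sum yields the clean constant $e^{1/\mu}$. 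I would carry this out by defining the stopping time $\tau$ as the first attempt-free unit block after each attempt in which the memory set empties, writing a supermartingale or direct conditional-expectation recursion for $\e_\omega[T_1 \mid N_t]$, and checking the arithmetic gives $e^{1/\mu}$ — the exponential in $1/\mu$ is exactly what one expects since emptying a memory set of size $\asymp 1/\mu$ (the typical size, as the particle makes $\asymp 1/\mu$ attempts before a given edge refreshes) against refresh rate $\mu$ costs time $\asymp e^{1/\mu}$.
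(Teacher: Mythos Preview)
Your proposal has a genuine gap at the crucial step. You correctly identify the main difficulty---the memory set size $|A_m|$ is unbounded, so a success probability of $e^{-1}(1-e^{-\mu})^{|A_m|}$ can be arbitrarily small---but your resolution is not a proof. The ``geometric series whose ratio is controlled by $1 - e^{-1 - j/\mu}$'' is not a geometric series at all, since $j = |A_m|$ is itself a random process that grows at rate up to $1$ and shrinks at rate $\mu j$. You cannot simply ``optimize/bound the resulting sum'' without first controlling the trajectory of $j$, which is precisely the heart of the problem. Nothing in your outline produces the exact constant $e^{1/\mu}$; at best a careful version of your block argument would yield $e^{C/\mu}$ for some unspecified $C > 1$, and even that would require real work (e.g., a Lyapunov function or an excursion decomposition) that is absent.

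The paper's argument bypasses all of this with a single clean idea: the process $(|A_t|)_{t\ge 0}$ is stochastically dominated by a birth--death process $(S_t)$ with constant birth rate $1$ and death rate $\mu S_t$ (an $M/M/\infty$ queue). The domination holds because the memory set gains edges at rate at most $1$ (an attempt may hit an edge already in $A_t$) and loses them at rate exactly $\mu|A_t|$. The stationary distribution of $(S_t)$ is Poisson$(1/\mu)$, so the mass at $0$ is $\tilde q_0 = e^{-1/\mu}$, and the expected return time to $0$ for a positive recurrent chain is $\tilde q_0^{-1} = e^{1/\mu}$. This gives the stated bound in one line and explains why the constant is exactly $e^{1/\mu}$: it is the reciprocal of $\mathbb{P}(\text{Poisson}(1/\mu) = 0)$. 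Your unit-block scheme never accesses the stationary distribution and so cannot recover this constant.
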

	\begin{proof}
		It suffices to estimate $\e_{\omega}[T_1]$, as the same proof can be applied to $\e_{\omega}[T_k - T_{k-1}]$. We define the hitting time of $1$ for the process $(\vert A_t\vert)_{t \geq 0}$:
		\begin{align*}
			\tau_1 := \inf\left\{t >0: \vert A_t\vert = 1 \right\}.
		\end{align*}
		The proof is based on the observation that the size of the memory set $(\vert A_t\vert)_{t \geq 0}$ is stochastically dominated by a birth-death process $(S_t)_{t \geq 0}$ with birth rate $1$ and death rate $\mu |S_t|$: 
		\begin{align}
			&\vert A_t\vert = S_t, \quad \forall t \in [0, \tau_1], \label{eq.couple1}\\
			&\vert A_t\vert \leq S_t,\quad \forall t \in (\tau_1, \infty).
			\label{eq.couple2}
		\end{align}
		Regarding \eqref{eq.couple1}, since the memory set $A_t$ is empty before $\tau_1$, its increasing rate is the same as that of $S_t$, and we can couple the two processes using the same random variable $\xi_1$. In the subsequent evolution, the increasing rate of the memory set is smaller than $1$ because every attempt to jump may choose an edge already in $A_t$. Meanwhile, the decay rate of the memory set is $\mu \vert A_t \vert$ since every edge refreshes independently with rate $\mu$. This leads to \eqref{eq.couple2}.
		
		
		We define another sequence of reset times $(\widetilde{T}_k)_{k \in \N}$ for the process $(S_t)_{t \geq 0}$. In view of \eqref{eq.couple1} and \eqref{eq.couple2}, we have that 
		\begin{align*}
			\e_{\omega}[T_1] \leq \e[\widetilde{T}_1].
		\end{align*}
		To estimate $\e[\widetilde{T}_1]$, we can calculate the stationary distribution $(\tilde q_n)_{n \in \N}$ for the process $(S_t)_{t \geq 0}$ by solving the detailed balance equation 
		\begin{align}\label{eq.detailedBalance}
			\tilde q_n  = \tilde q_{n+1}\cdot \mu (n+1)  , \qquad \forall n \in \N.
		\end{align} 
		We find that $(\tilde q_n)_{n \in \N}$ follows a $\operatorname{Poisson}(\mu^{-1})$ distribution. Therefore, $(S_t)_{t \geq 0}$ is a positive recurrent Markov process (see e.g., Theorem~21.13 in the book \cite{LP17} by Levin and Peres) and the expectation of  $\widetilde{T}_1$ is given by (see e.g., \cite[Proposition~1.14 (ii)]{LP17})
		\begin{align}\label{eq.BDreturn}
			\e[\widetilde{T}_1] =  \tilde q_0^{-1} = e^{\frac{1}{\mu}}.
		\end{align}
		This concludes \eqref{eq.weakRege}.
	\end{proof}

	\subsection{Existence of speed on transitive graphs}
	
	With \Cref{lem.weakRege}, we can derive that the speed of random walk on dynamical percolation exists for any transitive graph.
	\begin{lemma}\label{lem:existence_speed}
		For any infinite transitive graph $G$ and any initial bond configuration $\omega \in \{0,1\}^E$, the speed $v_p(\mu)$ of $(X_t, \eta_t)_{t \geq 0}$ (recall \eqref{eq.speed}) exists $\p_{\omega}$-a.s.~and in $L^1(\p_\omega)$. Furthermore, $v_p(\mu)$ does not depend on the choice of $\omega$. 
		
	\end{lemma}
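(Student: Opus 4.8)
## Proof Proposal for Lemma 1.8 (Existence of speed)

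The plan is to deduce the existence of the speed from a subadditive-type argument built on the reset times $(T_k)_{k\in\N}$ from \Cref{lem.weakRege}. The key observation is that the reset times are (close to) regeneration times for the joint process $(X_t,\eta_t)_{t\ge 0}$: at a reset time $T_k$, the memory set $A_{T_k}$ is empty, so \emph{every} edge that the walk has attempted to cross since the previous reset has been refreshed afterwards. Consequently, conditionally on the position $X_{T_k}$, the law of the edge configuration restricted to the edges the walk has ``touched'' is back to the stationary product measure $\pi_p$ on those edges, and is independent of the past trajectory. More precisely, I would first argue that the increments $\bigl(X_{T_{k}} - X_{T_{k-1}},\, T_k - T_{k-1}\bigr)$ — or rather the pairs $\bigl(\dist(X_{T_{k-1}}, X_{T_k}),\, T_k - T_{k-1}\bigr)$ together with the local environment seen from $X_{T_k}$ — form a process with a renewal-like structure, using transitivity of $G$ to identify the re-rooted environment at $X_{T_k}$ with the one at the origin. \Cref{lem.weakRege} gives $\e_\omega[T_k - T_{k-1}]\le e^{1/\mu}<\infty$, which is what makes the renewal argument quantitative.

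Next, I would set up the limit. Let $N_t := \max\{k : T_k \le t\}$ be the number of resets by time $t$. By the renewal structure and $\e[T_k - T_{k-1}] \le e^{1/\mu}$, we have $N_t/t \to 1/\e[T_1 - T_0]$ a.s.\ (a standard renewal theorem, once one checks the increments $T_k - T_{k-1}$ are i.i.d., or at least stationary ergodic, with finite mean — this requires a short argument that the resets are genuine regeneration times, not merely stopping times with a Markov property at fixed times). Then I would write
\begin{equation*}
	\frac{\dist(X_0, X_t)}{t} = \frac{\dist(X_0, X_{T_{N_t}})}{t} + \frac{\dist(X_0,X_t) - \dist(X_0, X_{T_{N_t}})}{t},
\end{equation*}
and control the error term using $|\dist(X_0,X_t) - \dist(X_0,X_{T_{N_t}})| \le \dist(X_{T_{N_t}}, X_t) \le (\text{number of jump attempts in } (T_{N_t}, t])$, which is negligible after dividing by $t$ because the overshoot $t - T_{N_t}$ has controlled tails (again from the finite-mean reset estimate and Borel–Cantelli). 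For the main term, $\dist(X_0, X_{T_k})$ is subadditive along the renewal times: $\dist(X_0, X_{T_{k+\ell}}) \le \dist(X_0, X_{T_k}) + \dist(X_{T_k}, X_{T_{k+\ell}})$, and by transitivity and the regeneration property the re-rooted increment $\dist(X_{T_k}, X_{T_{k+\ell}})$ has the same law as $\dist(X_0, X_{T_\ell})$ under the appropriate measure. Applying Kingman's subadditive ergodic theorem (or, since the increments are i.i.d., just the strong law combined with the subadditivity to handle the triangle-inequality slack) yields $\dist(X_0, X_{T_k})/k \to \ell_p(\mu)$ a.s.\ and in $L^1$ for some constant $\ell_p(\mu) \in [0,\infty)$; combining with $N_t/t \to 1/\e[T_1]$ gives $\dist(X_0,X_t)/t \to v_p(\mu) := \ell_p(\mu)/\e[T_1] \in [0,1]$, with the bound $v_p(\mu)\le 1$ coming from $\dist(X_0,X_t) \le$ (number of successful jumps) $\le$ (number of jump attempts), which has rate $1$.

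Finally, for the independence of $v_p(\mu)$ on $\omega$: the speed is an event in the $\sigma$-field generated by $(X_t)_{t\ge 0}$ — indeed $\{\lim_t \dist(X_0,X_t)/t = v\}$ is such an event — so once we know the limit equals $v_p(\mu)$ for $\pi_p$-a.e.\ initial environment, \Cref{prop:stationary_arbitrary} immediately upgrades this to \emph{every} initial configuration $\omega$. The $L^1$ convergence follows from a.s.\ convergence plus uniform integrability of $\dist(X_0,X_t)/t$, which holds because $\dist(X_0,X_t)/t \le (\text{Poisson}(t)\text{ variable})/t$ is bounded in $L^2$ uniformly in $t$.

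The main obstacle I expect is justifying the renewal/regeneration structure rigorously — i.e., proving that the reset times $T_k$ are genuine regeneration times at which the conditional law of the future of $(X_t,\eta_t)$ given the past depends only on $X_{T_k}$ (and, by transitivity, is a shifted copy of the original process started from stationarity on the ``active'' edges). One must be careful: at $T_k$ the memory set is empty, but $\eta_{T_k}$ restricted to edges \emph{never touched} by the walk is still the unmodified initial configuration rather than a fresh sample, so the regeneration is only for the part of the environment that the walk can have learned about; a clean way around this is to observe that the walk's future trajectory depends on $\eta$ only through the edges it attempts to cross, whose status at the moment of attempt, after $T_k$, is an i.i.d.\ $\mathrm{Ber}(p)$ sample by the definition of the memory set — this is precisely the point already exploited in the proof of \Cref{prop:stationary_arbitrary}. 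Making this coupling precise, and checking the resulting increments are i.i.d.\ with finite mean so that Kingman's theorem and the renewal theorem apply, is the technical heart of the proof.
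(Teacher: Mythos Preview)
Your proposal is correct and follows essentially the same route as the paper's proof: use the reset times $(T_k)$ as regeneration times under the stationary initial law $\pi_p$, apply Kingman's subadditive ergodic theorem to $\dist(X_0,X_{T_n})/n$, combine with the LLN for $T_n/n$ to pass to continuous time, invoke \Cref{prop:stationary_arbitrary} to extend to arbitrary $\omega$, and use the Poisson bound on jump attempts for uniform integrability and $L^1$ convergence. The only cosmetic differences are that the paper uses a two-sided sandwich (its inequality \eqref{eq.vSupInf}) rather than your $N_t$-plus-overshoot decomposition, and it dispatches the finite-mean condition $\e[\dist(X_0,X_{T_1})]<\infty$ by an optional stopping argument on the supermartingale $\dist(X_0,X_t)-t$, which you should spell out. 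Your worry in the final paragraph about edges never touched is resolved exactly as you suspect: working under $\pi_p$ makes $\eta_{T_k}$ genuinely $\pi_p$-distributed (and conditionally independent of the trajectory) at the reset time, so the strong Markov property plus transitivity gives i.i.d.\ increments directly---the paper states this in one sentence without elaboration.
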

	
	\begin{proof}
		For simplicity, we assume the initial configuration $\omega$ is sampled from the product measure $\pi_p$. We will address the general initial configuration using Proposition~\ref{prop:stationary_arbitrary}. 
		
		\medskip
		\noindent \textit{Step~1: limit along the reset times.} Recall the reset times $(T_k)_{k \in \N}$ defined in \eqref{eq.weakRegeneration}. By the strong Markov property, the stationarity of the initial configuration, and the transitivity of $G$, we know that $(T_k - T_{k-1})_{k \in \N}$ are i.i.d.~random variables and $(X_{T_k})_{k \in \N}$ is a discrete random walk on $G$. Since the increments of $\dist(X_0, X_t)$ have a rate at most $1$, $(\dist(X_0, X_t) - t)_{t \geq 0}$ is a supermartingale. Then, the optional stopping time theorem yields that
		\begin{align*}
			\e[\dist(X_0, X_{T_1 \wedge t}) - T_1 \wedge t] \leq 0.
		\end{align*}
		This implies that $\e[\dist(X_0, X_{T_1 \wedge t})] \leq \e[T_1 \wedge t]$. Taking $t \to \infty$, we obtain that
		\begin{align*}
			\e[\dist(X_0, X_{T_1})] &\leq \liminf_{t \to \infty} \e[\dist(X_0, X_{T_1 \wedge t})] \\
			&\le\lim_{t \to \infty }\e[T_1 \wedge t] = \e[T_1] < \infty,
		\end{align*}
		where we applied Fatou's lemma in the first step, the monotone convergence theorem in the third step, and \eqref{eq.weakRege} in the last step.
		Once we have verified that $\e[\dist(X_0, X_{T_1})] < \infty$, according to \cite[Theorem~14.10]{LP17} based on Kingman's subadditive ergodic theorem, the limit
		\begin{align}\label{eq.vSubsequence}
			v' := \lim_{n\to \infty} \frac{\mathrm{dist}(X_0, \, X_{T_n})}{n}
		\end{align}
		exists $\p$-a.s.\ and is a constant.

		\medskip
		\noindent\textit{Step~2: limit along $\r_+$.}
		We define the total number of attempts to jump during $[s, t]$ as
		\begin{align}\label{eq.defJ}
			J[s,t] := \# \{ i \in \N_+: \xi_i \in [s, t]\}.
		\end{align}
		Note that 
		\begin{multline}\label{eq.vSupInf}
			\frac{\mathrm{dist}(X_0, \, X_{T_{n}}) - J[T_{n-1}, T_n]}{T_n} \leq \inf_{t \in (T_{n-1}, T_n]}\frac{\mathrm{dist}(X_0, \, X_t)}{t} \\
			\leq \sup_{t \in (T_{n-1}, T_n]}\frac{\mathrm{dist}(X_0, \, X_t)}{t} \leq \frac{\mathrm{dist}(X_0, \, X_{T_{n-1}}) + J[T_{n-1}, T_n]}{T_{n-1}}.
		\end{multline}
		Hence, it suffices to show that the two sides converge to the same limit as $n \to \infty$. First, we have that
		\begin{align}\label{eq.vSupInf1}
			\lim_{n \to \infty} \frac{\mathrm{dist}(X_0, \, X_{T_{n}})}{T_n} = \lim_{n \to \infty} \frac{\mathrm{dist}(X_0, \, X_{T_{n}})}{n} \cdot \frac{n}{T_n} = \frac{v'}{\e[T_1]}, \qquad \p\text{-a.s.}
		\end{align}
		by \eqref{eq.vSubsequence} and the strong law of large numbers (LLN). Regarding $J[T_{n-1}, T_n]$, notice that $t \mapsto J[0,t]$ is a Poisson point process of intensity $1$, so it admits a strong LLN (see e.g., Theorem~2.4.7 of Durrett \cite{durrett2019probability}):
		\begin{align*}
			\lim_{t \to \infty}\frac{J[0,t]}{t} = 1, \qquad \p\text{-a.s.}
		\end{align*}
		On the other hand, using the LLN again, we have that $T_n/T_{n-1} \to 1$ $\p$-a.s. Hence, we conclude that  
		\begin{align}\label{eq.vSupInf2}
			\lim_{n \to \infty} \frac{J[T_{n-1}, T_n]}{T_{n-1}} = \lim_{n \to \infty} \left[\frac{J[0, T_n]}{T_{n}} \frac{T_n}{T_{n-1}} - \frac{J[0, T_{n-1}]}{T_{n-1}} \right]= 0, \qquad \p\text{-a.s.}
		\end{align}
		Applying \eqref{eq.vSupInf1} and \eqref{eq.vSupInf2} to \eqref{eq.vSupInf} yields that
		\begin{align*}
			v_p(\mu) = \lim_{t\to \infty} \frac{\mathrm{dist}(X_0, \, X_t)}{t} =  \frac{v'}{\e[T_1]}, \qquad \p\text{-a.s.} 
		\end{align*}
		
		For a general initial configuration $\omega$, consider the event $A := \{\lim_{t\to \infty} {\mathrm{dist}(X_0, \, X_t)}/{t} = v_p(\mu)\}$. By \Cref{prop:stationary_arbitrary}, we have that 
		\begin{align*}
			\p_{\omega}(A) = \p(A) = 1,
		\end{align*}
		which implies the $\p_\omega$-a.s.~convergence. Since $v_p(\mu)$ is defined under $\p$, it does not depend on the choice of $\omega$.
		
		\medskip
		\noindent
		\textit{Step~3: $L^1$-convergence.}
		Since $\dist(X_0, X_t) \leq J[0,t]$ and $J[0,t]$ follows a Poisson$(t)$ distribution, for every initial configuration $\omega$, we have that 
		\begin{align}\label{eq.L2UI}
			\e_{\omega}\left[ \left(\frac{\mathrm{dist}(X_0, \, X_t)}{t}\right)^2  \right] &\leq \e_{\omega}\left[ \left(\frac{J[0,t]}{t} \right)^2 \right] = \frac{t^2 + t}{t^2} \le 2, \quad \forall t\ge 1. 
		\end{align}
		Thus, the ratios $({\mathrm{dist}(X_0, \, X_t)}/{t})_{t\ge 1}$ are uniformly integrable under $\p_\omega$. Together with the $\p_\omega$-a.s.~convergence, this justifies the $L^1(\p_\omega)$ convergence.             
	\end{proof}

	\begin{remark}
		We can establish the positivity of the speed $v_p(\mu)$ for discrete nonamenable groups: Kaimanovich and Vershik proved in \cite[Theorem~5]{kaimanovich1983random} that the spectral radius of the random walk $(X_{T_k})_{k \in \N}$ is strictly less than $1$. Proposition 14.6 in \cite{LP16} (which is due to Avez's work in 1976 \cite{Avez1976}) connects this lower bound of the spectral radius to that of entropy, which further gives a lower bound of the speed of the random walk following \cite[Theorem~14.9]{LP16}. \end{remark}

	\subsection{Environment seen by the particle on transitive unimodular graphs}\label{sec:regenerate}
	
	In this subsection, we study the process $(X_t, \eta_t)_{t\ge 0}$ from the view of the particle. Using the reversibility of the process $(X_t, \eta_t)_{t \geq 0}$,  we extend it to a two-sided process $(X_t, \eta_t)_{t \in \mathbb R}$ by first running an independent copy $(X'_t, \eta'_t)_{t \geq 0}$ and then defining
	\begin{align}\label{eq.RWNegative}
		(X_t, \eta_t) := (X'_{-t}, \eta'_{-t}),\quad \forall t < 0.
	\end{align} 
	The random variables in \eqref{eq:info} can also be extended to $(\{\xi_k, \ej_k\}_{k \in \z}, \{\chi^{e}_j\}_{j \in \z, e\in E})$ for the two-sided process $(X_{t}, \eta_{t})_{t \in \r}$. Specifically, for $k<0$, $\xi_{k}$ and $ \ej_{k}$ record the moments and the corresponding edges attempted to be crossed before time $0$, while $\chi^{e}_{k}$ records the moments when the edge $e$ refresh before time $0$.

	To ensure the stationarity of the environment seen by the particle, besides transitivity, we also need to assume that the graph $G$ is unimodular. We now introduce some necessary definitions and notations (for more details, refer to \cite[Chapter~8.2]{LP16}). Let $G=(V,E)$ be a transitive graph and $\aut(G)$ be the group of automorphisms. Then, $G$ is unimodular if its left Haar measure coincides with its right Haar measure. Denote this Haar measure by $H$. Let $\Gamma_{x,y}$ be the set of automorphisms that maps $x$ to $y$:
	\begin{align}\label{eq.defGammaxy}
		\Gamma_{x,y} := \{\gamma \in \aut(G): \gamma x = y\}.
	\end{align}
	Let $H_{o,x}$ represent the normalized probability measure obtained by restricting $H$ to $\Gamma_{x,y}$.

	In this subsection, we always assume that $G=(V,E)$ is a locally finite, transitive, and unimodular graph. We define the environment seen from the particle $(\eta^*_t)_{t \in \r}$ as follows: the process $(\eta^*_t)_{t \in \r}$ takes value in $\{0,1\}^E$ and the particle is always located at the origin $o$. Each edge refreshes independently with rate $\mu$. At an exponential clock of rate $1$, the particle attempts to jump to a neighbor $x$ of the origin $o$, selected uniformly at random,  and the environment is shifted accordingly. 
	More specifically, for the $k$-th attempt to jump at time $\xi_k$, we sample an automorphism $\gamma$ according to the normalized Haar measure $H_{o,x}$. We then define the shift of the environment at $\xi_k$ as follows:
	\begin{align}\label{eq.Shift}
		\eta^*_{\xi_k} = \left\{
		\begin{array}{ll}
			\eta^*_{\xi_k-} \circ \gamma, &\quad \text{ if } \eta^*_{\xi_k-}(\{o,x\}) = 1\\
			\eta^*_{\xi_k-}, &\quad \text{ if }  \eta^*_{\xi_k-}(\{o,x\}) = 0
		\end{array}\right. ,
	\end{align} 
	where $(\eta^*_{\xi_k-} \circ \gamma)(e) := \eta^*_{\xi_k-}(\gamma e)$ for every $e \in E$. Here, in the first case, when the edge $\{o,x\}$ is open, the particle jumps and the corresponding automorphism in $\aut(G)$ applies. The automorphism $\gamma$ is chosen uniformly at random since there can be multiple automorphisms in $\Gamma_{o,x}$. In the second case, when the edge $\{o,x\}$ is closed, the environment remains unchanged and the particle does not move. 
	
	We continue to use $\p$ to denote the probability measure for the extended two-sided process when the initial bond configuration $\eta_0$ is sampled according to $\pi_p$. The next result shows that the process $(\eta^*_t)_{t \in \mathbb R}$ seen from the particle forms a stationary ergodic environment.

	\begin{lemma}\label{lem.ViewStationary}
		Given a transitive unimodular graph $G=(V,E)$ where each vertex has degree $d\ge 3$, the environment $(\eta^*_t)_{t \in \mathbb R}$ seen from the particle is stationary and ergodic under $\p$.
	\end{lemma}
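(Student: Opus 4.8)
The plan is to derive stationarity from the unimodularity of $G$ (via the mass-transport principle) and ergodicity from the refreshing mechanism.

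\emph{Stationarity.} The process $(\eta^*_t)_{t\ge 0}$ is a Markov process on $\{0,1\}^E$ whose generator decomposes as $\mathcal{L}^*=\mathcal{L}_{\mathrm{ref}}+\mathcal{L}_{\mathrm{sh}}$, where $\mathcal{L}_{\mathrm{ref}}$ resamples each edge from $\mathrm{Ber}(p)$ at rate $\mu$, independently, and, with $x\sim o$ ranging over the neighbours of the origin,
\begin{equation*}
(\mathcal{L}_{\mathrm{sh}}f)(\omega)=\frac1d\sum_{x\sim o}\omega(\{o,x\})\left(\int_{\Gamma_{o,x}}f(\omega\circ\gamma)\, \mathrm{d} H_{o,x}(\gamma)-f(\omega)\right).
\end{equation*}
A one-sided stationary Markov process extends canonically to a two-sided stationary one, so it suffices to check that $\pi_p$ is invariant, i.e.\ $\int\mathcal{L}^*f\,\mathrm{d}\pi_p=0$ for every $f$ depending on finitely many edges. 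The term $\int\mathcal{L}_{\mathrm{ref}}f\,\mathrm{d}\pi_p$ vanishes because $\pi_p$ is reversible for the edgewise resampling. For the shift term, the change of variables $\omega'=\omega\circ\gamma$ (which preserves $\pi_p$ since $\pi_p$ is $\aut(G)$-invariant), together with $\gamma^{-1}\{o,x\}=\{\gamma^{-1}o,o\}$ for $\gamma\in\Gamma_{o,x}$, reduces $\int\mathcal{L}_{\mathrm{sh}}f\,\mathrm{d}\pi_p=0$ to the identity
\begin{equation*}
\sum_{x\sim o}\int_{\Gamma_{o,x}}h(\gamma^{-1}o)\, \mathrm{d} H_{o,x}(\gamma)=\sum_{z\sim o}h(z),\qquad h\colon\{z:z\sim o\}\to\mathbb{R}.
\end{equation*}
This is precisely the statement that the left Haar measure of $\aut(G)$ is inversion-invariant, which holds exactly because $G$ is unimodular; it is the form of the mass-transport principle recorded in \cite[Chapter~8]{LP16}. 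Hence $\pi_p$ is invariant and $(\eta^*_t)_{t\in\mathbb{R}}$ is stationary.

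\emph{Ergodicity: reduction to a decorrelation estimate.} For a stationary Markov process, ergodicity under the time shift is equivalent to the triviality of its invariant $\sigma$-field, which in turn holds as soon as every bounded function fixed by the transition semigroup $(P^*_t)$ is $\pi_p$-a.s.\ constant. So let $f\in L^\infty(\{0,1\}^E,\pi_p)$ with $P^*_tf=f$ for all $t\ge 0$. Then $(f(\eta^*_t))_{t\ge 0}$ is a bounded martingale under $\mathbf{P}$ which is also stationary, hence a.s.\ constant in $t$; thus $f(\eta^*_t)=f(\eta^*_0)$ $\mathbf{P}$-a.s.\ for all $t$. For a finite edge set $S$ put $f_S:=\mathbf{E}_{\pi_p}[f\mid\sigma(\omega(e):e\in S)]$, so that $\varepsilon_S:=\|f-f_S\|_{L^2(\pi_p)}\to 0$ as $S\uparrow E$. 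Since $\mathrm{Law}(\eta^*_t)=\pi_p$ for every $t$, the triangle inequality gives $\|f_S(\eta^*_t)-f_S(\eta^*_0)\|_{L^2(\mathbf{P})}\le 2\varepsilon_S$, and expanding the square and using stationarity yields
\begin{equation*}
\mathrm{Cov}_{\mathbf{P}}\big(f_S(\eta^*_0),f_S(\eta^*_t)\big)\ \ge\ \mathrm{Var}_{\pi_p}(f_S)-2\varepsilon_S^{\,2}\qquad\text{for all }t.
\end{equation*}
Therefore, if the decorrelation $\mathrm{Cov}_{\mathbf{P}}\big(f_S(\eta^*_0),f_S(\eta^*_t)\big)\to 0$ as $t\to\infty$ holds for each fixed finite $S$, then $\mathrm{Var}_{\pi_p}(f_S)\le 2\varepsilon_S^{\,2}\to 0$, whence $\mathrm{Var}_{\pi_p}(f)=0$ and $f$ is constant.

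\emph{The decorrelation is the main obstacle.} Passing to the two-sided process $(X_t,\eta_t)$, write $\eta^*_t=\eta_t\circ\psi_t$ with $\psi_t\in\aut(G)$ and $\psi_t(o)=X_t$ (so $\psi_0=\mathrm{id}$ and $\eta^*_0=\eta_0$). Then $\eta^*_t|_S=\eta_t|_{\psi_t(S)}$, where $\psi_t(S)$ consists of the edges at bounded distance from $X_t$; hence $f_S(\eta^*_t)$ is a function of the time-$t$ statuses of those edges. On the event that every such edge has refreshed during $(0,t]$, their statuses are i.i.d.\ $\mathrm{Ber}(p)$ and, up to the effect of conditioning on the walk having attempted some of them, independent of $\eta_0=\eta^*_0$; the probability of the complementary event is $O(e^{-\mu t})$ uniformly, because each edge carries an independent rate-$\mu$ refresh clock and, by transitivity, only a bounded number of edges lie near $X_t$. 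Turning this into the estimate $\mathrm{Cov}_{\mathbf{P}}(f_S(\eta^*_0),f_S(\eta^*_t))\to 0$ — the delicate point being to control the correlation introduced by conditioning on the walk's jump history, for which one uses that an edge never attempted by the walk is independent of the walk's trajectory — is the only genuinely technical step; granting it, the argument above is complete.
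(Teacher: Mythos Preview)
Your stationarity argument matches the paper's: both decompose the generator as refresh plus shift and invoke unimodularity for the shift part (the paper cites \cite[Lemma~3.13]{LS99}; you write out the mass-transport identity directly). For ergodicity, your reduction---bounded $P^*_t$-invariant functions are constant, via decorrelation of cylinder approximants---is correct, but you explicitly leave the decorrelation $\mathrm{Cov}_{\mathbf{P}}(f_S(\eta^*_0),f_S(\eta^*_t))\to 0$ unproved, and that is the entire substance of the ergodicity claim; everything else is soft. The difficulty you yourself flag is genuine and your sketch does not overcome it: the edges in $\psi_t(S)$ are precisely those adjacent to $X_t$, so conditioning on the walk's position biases their current status (e.g., the last successful jump into $X_t$ crossed an open edge) even if each of them has refreshed at some earlier point in $(0,t]$. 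Your proposed fix (``an edge never attempted by the walk is independent of the walk's trajectory'') does not apply, since these are exactly the edges most likely to have been attempted recently.

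The paper closes this gap by proving strong mixing for cylinder events $A,B\in\sigma(\{0,1\}^{B(o,R)}\times[0,T])$, which implies your decorrelation. The device is to separate the time axis into $[0,T]$ (where $A$ is evaluated), a buffer $[T,s]$, and $[s,s+T]$ (where $B$ is evaluated), and to introduce $E_1=\{\text{at most }T+s^\varepsilon\text{ attempted jumps during }[0,T]\}$ and $E_2=\{\text{every edge in }B(o,R+T+s^\varepsilon)\text{ refreshes during }[T,s]\}$. One has $\mathbf{P}(E_1^c\cup E_2^c)\to 0$ for $\varepsilon$ small, and on $E_1\cap E_2$ the Markov property together with stationarity yield $\mathbf{P}(\{T_s\eta^*\in B\}\mid\{\eta^*\in A\}\cap E_1\cap E_2)=\mathbf{P}(\eta^*\in B)$. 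The idea absent from your sketch is this temporal buffer in which \emph{all} relevant edges are guaranteed to refresh \emph{after} the conditioning window $[0,T]$ has closed.
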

	\begin{proof}
		First, we prove the stationarity. Let $\mathcal{L}$ denote the generator of the dynamics $(\eta^*_t)_{t \in \mathbb R}$: for any bounded measurable function $f : \{0,1\}^E \to \mathbb{R}$, 
		\begin{align*}
			\mathcal{L} f (\eta^*): &= \mathcal{L}_{R} f (\eta^*) + \mathcal{L}_{M} f (\eta^*), 
		\end{align*}
		where $\mathcal{L}_{R}$ and $\mathcal{L}_{M}$ are defined as
		\begin{align*}
			\mathcal{L}_{R} f (\eta^*) &:= \sum_{e \in E} \left[ \mu p \left(f(\eta^{*,e,1}) - f (\eta^*)\right) + \mu(1-p)\left(f(\eta^{*,e,0}) - f (\eta^*)\right)\right],\\
			\mathcal{L}_{M} f (\eta^*) &:= \sum_{x \in V_o} \frac{1}{\vert V_o\vert} \int_{\Gamma_{o,x}}\left[f(\eta^* \circ \gamma) - f (\eta^*)\right]\mathbf{1}_{\{\eta^*(\{o,x\}) = 1\}} \, \d H_{o,x}(\gamma).
		\end{align*}
		Here, $\eta^{*,e,1}$ (resp.~$\eta^{*,e,0}$) denotes the bond configuration obtained by opening (resp.~closing) the edge $e$, and $V_o$ represents the set of neighboring vertices of $o$.
		We have decomposed the generator into two parts $\mathcal{L}_{R}$ and $\mathcal{L}_{M}$, which correspond to the edge refreshing and the movement of the particle, respectively. To establish stationarity, it suffices to prove that 
		$$\e[\mathcal{L} f ] = \e[\mathcal{L}_{R} f ] +\e[\mathcal{L}_{M} f ] = 0.$$ 
		The term $\e[\mathcal{L}_{R} f ]=0$ follows from the fact that $\pi_p$ is an invariant measure for the refreshing generator $\mathcal{L}_{R}$. On the other hand, we have $\e[\mathcal{L}_{M} f ] = 0$ by \cite[Lemma~3.13]{LS99}, where Lyons and Schramm proved that the environment seen by the particle on static percolation is stationary when the graph is transitive unimodular.

		Next, we prove the ergodicity through the following strong mixing condition: for any $T, R \in (0, \infty)$  and cylinder events $A,B \in \sigma(\{0,1\}^{B(o,R)} \times [0,T])$, 
		\begin{align}\label{eq.mixing}
			\lim_{s \to \infty}\p\left(\{\eta^* \in A\} \cap \{T_{s} \eta^* \in B\}\right) = \p\left(\eta^* \in A\right)\p\left(\eta^* \in B\right).
		\end{align}
		Here, the notation $\eta^*$ represents the whole process $(\eta^*_t)_{t \in \mathbb R}$ and $ T_{s} $ is the time-shift operator defined as $(T_{s} \eta^*)_t := \eta^*_{s+t}$. To prove \eqref{eq.mixing}, let $\epsilon > 0$ be a constant that will be chosen later and choose an arbitrary $s > T$. Then, we introduce the  events
		\begin{align*}
			E_1 &:= \{\text{the number of attempted jumps is less than } T+s^{\epsilon} \}, \\
			E_2 &:= \{\text{all the edges in } B(o,R+T+s^{\epsilon}) \text{ refresh at least once during } [T, s] \}.
		\end{align*}
		Clearly, we have that 
		\begin{align}\label{eq.decayE1}
			\lim_{s \to \infty}\p(E_1^c) = 0,
		\end{align}
		since the number of attempted jumps is finite almost surely. On the other hand, using a simple union bound, we obtain that  
		\begin{align}\label{eq.decayE2}
			\limsup_{s \to \infty}\p(E_2^c) \leq \lim_{s\to \infty} d^{R+T+s^{\epsilon}} e^{- \mu (s - T)} = 0,
		\end{align}
		as long as we choose $\epsilon \in (0,1)$ sufficiently small depending on $d$ and $\mu$.  
		
		Denoting ${E_0 := \{\eta^* \in A\} \cap \{T_{s} \eta^* \in B\}}$, we can rewrite the left-hand side (LHS) of \eqref{eq.mixing} using \eqref{eq.decayE1} and \eqref{eq.decayE2} as  
		\begin{align}
			\lim_{s \to \infty}\p\left(E_0\right) &= \lim_{s \to \infty} \p\left(E_0 \cap E_1 \cap E_2\right) + \lim_{s \to \infty}\p\left(E_0 \cap E_1 \cap E_2^c\right) + \lim_{s \to \infty}\p\left(E_0 \cap E_1^c \right) \nonumber \\
			&=\lim_{s \to \infty} \p\left(E_0 \cap E_1 \cap E_2\right). \label{eq.mixingReduce}
		\end{align}
		We decompose the event $E_0 \cap E_1 \cap E_2$ as
		\begin{align}\label{eq.mixingFactor}
			\p\left(E_0 \cap E_1 \cap E_2\right) 
			= \p\left(\{T_{s} \eta^* \in B\} \vert \{\eta^* \in A\} \cap E_1 \cap E_2 \right)\p\left(\{\eta^* \in A\} \cap E_1 \cap E_2\right).
		\end{align}
		Using \eqref{eq.decayE1} and \eqref{eq.decayE2} again, the second factor on the right-hand side (RHS) satisfies
		\begin{align}\label{eq.mixingFactor2}
			\lim_{s \to \infty} \p\left(\{\eta^* \in A\} \cap E_1 \cap E_2\right)  = \p\left(\eta^* \in A\right).
		\end{align}
		For the first factor on the RHS of \eqref{eq.mixingReduce}, the Markov property implies that
		\begin{align}\label{eq.mixingFactor1}
			\p\left(\{T_{s} \eta^* \in B\} \vert \{\eta^* \in A\} \cap E_1 \cap E_2 \right) = \p\left(T_{s} \eta^* \in B \right) = \p\left( \eta^* \in B \right).
		\end{align}
		Here, the first equality holds because every visited edge is refreshed under $E_1 \cap E_2$, and the second equality is due to stationarity. Combining \eqref{eq.mixingReduce}--\eqref{eq.mixingFactor2} concludes \eqref{eq.mixing}.
	\end{proof}
	\begin{remark}
		The ergodicity of the environment seen from the particle will not be used in the rest of the paper. However, it may be of interest for future studies related to random walks on dynamical percolation. 
	\end{remark}

	\section{Speed for the critical case}
	\label{s:ub}
	
	In this section, we prove Theorem~\ref{t:ub}. A key ingredient is the following mean-field behavior exhibited by the supercritical percolation on $G$ with $p$ slightly greater than $p_c$. 
	
	\begin{lemma}\label{lem.SlightSup}
		Given an infinite graph $G$ satisfying the one-arm estimate \eqref{eq.OneArm}, there exist constants $\delta = \delta(G) > 0$ and $C >0$ such
		that for every vertex $v\in V$ and $p \in \left[p_c, p_c + {\delta}/{r} \right]$, 
		\begin{align}\label{eq.SlightSup}
			\P_p\left(\Rade(\clt_v) \geq r\right) \leq \P_p\left(\Radi(\clt_v) \geq r\right) \leq \frac{C}{r},\quad 	\forall r > 0.
		\end{align}
	\end{lemma}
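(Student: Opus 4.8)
The first inequality is immediate: for any $u\in\clt_v$ a chemical path from $v$ to $u$ is in particular a path in $G$, so $\dist(v,u)\le\dist_{\clt}(v,u)$; hence $\Rade(\clt_v)\le\Radi(\clt_v)$ pointwise and $\{\Rade(\clt_v)\ge r\}\subseteq\{\Radi(\clt_v)\ge r\}$. So it suffices to prove the bound $\P_p(\Radi(\clt_v)\ge r)\le C/r$ for $p\in[p_c,p_c+\delta/r]$. Write $\theta_r(p):=\P_p(\Radi(\clt_v)\ge r)$ (independent of $v$ by transitivity). The plan is to establish the differential inequality
\[
\frac{d}{dp}\,\theta_r(p)\ \le\ \frac{r}{p}\,\theta_r(p),\qquad p\in(0,1),
\]
and then integrate it over the interval $[p_c,p_c+\delta/r]$, whose length $\delta/r$ is tuned precisely so that this multiplicative control changes the one-arm constant only by a bounded factor.

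To apply Russo's formula I first note that $A_r:=\{\Radi(\clt_v)\ge r\}$ is an increasing event depending on only finitely many edges, namely those inside the (finite) graph ball $B(v,r)$: if $\Radi(\clt_v)(\omega)\ge r$ then some vertex $w$ satisfies $\dist_{\clt}(v,w)=r$, a chemical geodesic from $v$ to $w$ lies inside $B(v,r)$, and deleting edges outside $B(v,r)$ can only increase chemical distances, so $A_r$ already holds for the restriction of $\omega$ to $B(v,r)$. Russo's formula then gives $\theta_r'(p)=\sum_{e}\P_p(e\text{ is pivotal for }A_r)$.

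The key point is to bound this sum. Since the event that $e$ is pivotal for $A_r$ does not depend on the state of $e$, we have $\P_p(e\text{ pivotal})=p^{-1}\P_p(e\text{ pivotal and open})$; and if $e$ is pivotal and open then $\omega\in A_r$ while deleting $e$ destroys $A_r$, i.e.\ $e$ is a \emph{cut-edge} for the event $\{\Radi(\clt_v)\ge r\}$. I claim that on $A_r$ there are at most $r$ such cut-edges: fixing $\omega\in A_r$, pick $w$ with $\dist_{\clt}(v,w)=r$ and a chemical geodesic $\gamma$ from $v$ to $w$ (which has exactly $r$ edges); any edge $e\notin\gamma$ leaves $\gamma$ intact after deletion, so $w$ still lies at chemical distance $\ge r$ from $v$ and $e$ is not a cut-edge — hence every cut-edge lies on $\gamma$. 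Therefore
\[
\theta_r'(p)\ \le\ \frac1p\,\E_p\!\left[\ind{\Radi(\clt_v)\ge r}\cdot\#\{\text{cut-edges}\}\right]\ \le\ \frac{r}{p}\,\theta_r(p).
\]

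Finally, since $p\ge p_c$ this gives $\frac{d}{dp}\log\theta_r(p)\le r/p_c$, and integrating from $p_c$ to $p\le p_c+\delta/r$ yields $\theta_r(p)\le\theta_r(p_c)\,\exp\!\big(\tfrac{r}{p_c}(p-p_c)\big)\le\theta_r(p_c)\,e^{\delta/p_c}$. The one-arm estimate \eqref{eq.OneArm} bounds $\theta_r(p_c)\le C_0/r$, so $\theta_r(p)\le (C_0\,e^{\delta/p_c})/r$, which is \eqref{eq.SlightSup} with $C:=C_0\,e^{\delta/p_c}$ and any fixed choice of $\delta>0$ (e.g.\ $\delta=1$). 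The step I expect to require the most care is the combinatorial one: correctly identifying pivotal open edges with cut-edges and observing that all cut-edges can be packed onto a single length-$r$ geodesic (so that Russo's formula yields a factor $r$ rather than a divergent one). A more pedestrian alternative — sprinkling extra open edges onto a critical configuration and analysing the resulting tree of $p_c$-clusters glued along the new edges — would instead require an a priori estimate of the form $\E_{p_c}[\#(\clt_v\cap B(v,r))]\lesssim r$ on the volume of intrinsic balls, which does not follow from the one-arm estimate alone; the Russo/cut-edge route is preferable precisely because it avoids any such volume input.
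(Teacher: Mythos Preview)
Your proof is correct. The paper reaches the same conclusion $\theta_r(p)\le e^{\delta/p_c}\,\theta_r(p_c)$ by a more direct coupling argument rather than Russo's formula: realize $\P_{p_c}$ as a two-step procedure, first sampling $\P_p$ and then retaining each open edge independently with probability $p_c/p$. On the event $D_r=\{\Radi(\clt_v)\ge r\}$ under $\P_p$, fix a chemical geodesic $\gamma$ of length $r$; since $\gamma$ survives the thinning with probability $(p_c/p)^r$ and thinning can only increase chemical distances, one gets $\P_{p_c}(D_r)\ge(p_c/p)^r\,\P_p(D_r)\ge e^{-\delta/p_c}\,\P_p(D_r)$. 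Both arguments hinge on the same combinatorial observation you flagged as the crux---that the event is witnessed by a single path of length $r$---so your Russo/differential-inequality route is essentially the infinitesimal form of the paper's finite thinning, and both extract the identical factor $e^{\delta/p_c}$. The paper's version is marginally shorter in that it bypasses differentiation and the check that $A_r$ depends on finitely many edges; yours has the merit of making the pivotal-edge structure explicit. One small remark: your parenthetical ``independent of $v$ by transitivity'' is unnecessary and not assumed in the lemma---the argument works vertex by vertex.
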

	\begin{proof}
		The first inequality follows immediately from the fact that  $\Rade(\clt_v) \leq \Radi(\clt_v)$. For the second inequality, we introduce the shorthand notation $D_r := \{\Radi(\clt_v) \geq r\}$. Notice that the percolation cluster containing $v$ at $p_c$ can be obtained in two steps: we first construct a Bernoulli-$p$ percolation, and then consider the Bernoulli-$(p_c/p)$ percolation of it. This gives us the inequality
		\begin{align*}
			\P_{p_c}(D_r)
			\ge
			\Big( \frac{p_c}{p} \Big)^{\! r} \P_p(D_r) \ge
			\left(1+\frac{\delta}{p_c r}\right) ^{- r} \P_p(D_r) \ge
			e ^{-\delta/p_c} \P_p(D_r),
		\end{align*}
		where the factor $({p_c}/{p})^r$ above represents the probability that all the $r$ edges in a path realizing $\{\Radi(\clt_v) \geq r\}$ is open, and the second inequality is due to the condition $p \in \left[p_c, p_c + {\delta}/{r} \right]$. Now, applying the one-arm estimate \eqref{eq.OneArm} to $\P_{p_c}(D_r)$, we obtain that
		\begin{align*}
			\P_p(D_r) \leq e ^{\delta/p_c}  \P_{p_c}(D_r) \leq \frac{C}{r},
		\end{align*}
		which completes the proof of the second inequality in \eqref{eq.SlightSup}.
	\end{proof}

	\begin{proof} [Proof of Theorem \ref{t:ub}]
		We study the displacement $\e[\dist(X_0, X_t)]$, where $t=t(\mu)$ will be chosen later. Denote by $\tclt$ the subgraph composed of all the bonds that are open at least once during $[0,t]$. Under the stationary measure $\p$, the subgraph $\tclt$ is a percolation on $G$, where each bond is open with probability 
		\begin{align*}
			p := p_c + (1-p_c)(1-  e^{-\mu t p_c}) \leq p_c(1+\mu t).
		\end{align*}  
		Let $\tclt_o$ denote the cluster containing the root $o$ in $\tclt$, and let $\delta$ be the constant in Lemma~\ref{lem.SlightSup}. Under the condition 
		\begin{align}\label{eq.muSlight}
			0 \leq p_c \mu t \leq {\delta}/{r},
		\end{align}
		we apply Lemma~\ref{lem.SlightSup} to obtain that
		\begin{align}\label{eq.tildeKolmogrov}
			\p\left(\Rade(\tclt_o) \geq r\right) \leq \frac{C}{r}. 
		\end{align}
		
		Next, we analyze the displacement using $\tclt$ and decompose it as follows:
		\begin{equation}
			\begin{split}\label{eq.CriticalDecom}
				\e[\dist(X_0, X_t)] \leq \sum_{k=1}^K 	&~\e\left[\dist(X_0, X_t)\1_{\left\{2^{k-1} \leq  \Rade(\tclt_o) \leq 2^k \right\}}\right] \\
				+ &~\e\left[\dist(X_0, X_t)\1_{\left\{\Rade(\tclt_o) \geq 2^K \right\}}\right],
			\end{split}
		\end{equation}
		where $K$ is a threshold to be determined later. For the scale $2^k$, since the random walk during $[0,t]$ must stay within $\tclt_o$, we have that 
		\begin{align*}
			\e\left[\dist(X_0, X_t)\1_{\left\{2^{k-1} \leq \Rade(\tclt_o) \leq 2^k \right\}}\right] &\leq 2^k \p\left(\Rade(\tclt_o) \geq 2^{k-1}\right)  \leq 2^k \cdot \frac{C}{2^{k-1}} = 2 C, 
		\end{align*}
		where we used the estimate \eqref{eq.tildeKolmogrov} in the second step, assuming that \eqref{eq.muSlight} holds. 
		For the scale beyond the threshold $2^K$, we use the trivial bound $\e[\dist(X_0, X_t) \vert \tclt_o] \le t$ to obtain that
		\begin{align*}
			\e\left[\dist(X_0, X_t)\1_{\left\{\Rade(\tclt_o) \geq 2^K \right\}}\right] &= \e\left[\dist(X_0, X_t) \, \big\vert \Rade(\tclt_o) \geq 2^K \right] \p\left(\Rade(\tclt_o) \geq 2^K \right)\\
			&\leq \frac{C t}{2^{K}}.
		\end{align*}
		Once again, we applied the estimate \eqref{eq.tildeKolmogrov} provided that the condition \eqref{eq.muSlight} holds for $r = 2^K$.  
		We choose $K$ such that 
		$$\frac{\delta}{2^{K+1}}<p_c \mu t\leq \frac{\delta}{2^K}.$$ 
		By choosing this $K$, we ensure that the condition $p_c \mu t \leq {\delta}/{2^k}$ is satisfied in all previous steps for $1 \leq k \leq K$. Plugging the above estimates into \eqref{eq.CriticalDecom}, we obtain that
		\begin{align*}
			\frac{1}{t}\e[\dist(X_0, X_t)] \leq  \frac{2 C}{t} \left(K + \frac{t}{2^{K+1}}\right) \le   2 C \left(\frac{- \log_2(p_c\mu t/\delta) }{t} + \frac{p_c\mu t}{\delta}\right).
		\end{align*} 
		To minimize the RHS, we set $t=t(\mu):= \sqrt{(1/\mu) \log(1/\mu)}$ and obtain that 
		\begin{align}\label{eq.CriticalSmallInterval}
			\frac{1}{t}\e[\dist(X_0, X_t)] \leq  C \sqrt{\mu \log(1/\mu)}.
		\end{align}
		
		Finally, we use the definition \eqref{eq.speed} and the triangle inequality to obtain that 
		\begin{align*}
			v_{p_c}(\mu) &= \lim_{T\to \infty} \frac{\e [\mathrm{dist}(X_0, \, X_T)]}{T}  \\
			&\leq \lim_{T \to \infty} \frac{t}{T} \sum_{n=0}^{\lfloor T/t\rfloor} \frac{1}{t} \e[\dist(X_{nt}, X_{(n+1)t})] \\
			& \leq C \sqrt{\mu \log(1/\mu)}.
		\end{align*} 
		Here, in the second step, we applied the estimate \eqref{eq.CriticalSmallInterval} to every segment $[nt, (n+1)t]$ due to stationarity. This concludes \eqref{eq:boundvpc}.
	\end{proof}

	\section{Speed for the supercritical case}\label{sec_super}

	In this section, we provide the proof of Theorem~\ref{thm:Supercritical}. The upper bound in \eqref{eq:Supercritical} is trivial. 
	When $\mu \ge 1$, the lower bound follows from \Cref{prop:general_Lower}. When $\mu$ is much smaller than 1, we get a substantial improvement over \eqref{eq:lower_vpmu_general}. 
	The proof of the lower bound in this case relies on the Diaconis-Fill coupling~\cite{DF90} between the random walk and the evolving set process (which we will review in Section~\ref{subsec::DFcoupling}). Assuming a key estimate in Lemma~\ref{lem:Phi2}, we complete the proof of Theorem~\ref{thm:Supercritical} in Section~\ref{subsec::DFcoupling}. For clarity, we first present the proof of Lemma~\ref{lem:Phi2} in Section~\ref{subsec_tree} for a special case where $G$ is an infinite regular tree. This proof is simpler and already contains all the key ideas. 
	Subsequently, in \Cref{subsec_general}, we explain the necessary modifications to extend the proof to general nonamenable transitive unimodular graphs.

	\subsection{The evolving set process and Diaconis-Fill coupling}
	\label{subsec::DFcoupling}
	The whole evolution of the environment is denoted by $\bm{\eta}=(\eta_t: t\ge 0)$.
	We discretize time by observing the random walk at nonnegative integer times. We consider the time-inhomogeneous Markov chain with transition probability given by
	\begin{equation}\label{eq:discre_MC}
		P_{n+1}^{\bm{\eta}}(x, y)=\mathbb{P}^{\bm{\eta}}\left(X_{n+1}=y \;|\; X_{n}=x\right),\quad \forall x, y \in V, \ n \in \mathbb{N} \cup\{0\} .
	\end{equation}
	Note that $\pi(x)\equiv 1, ~{x \in V},$ is a stationary measure for each $P_{n}^{\bm{\eta}}$. Moreover, since the random walk moves at rate 1, we have
	\[P_{n}^{\bm{\eta}}(x, x) \geq e^{-1},\quad \forall x \in V, \ n \in \mathbb{N}.\]
	The evolving set process is a Markov chain that takes values in the collection of subsets of $V$. Its transition is defined as follows: given the current state $S_n=S\subset V$, we pick a random variable $U$ uniformly distributed in $[0,1]$, and the next state of the chain is the set 
	\[S_{n+1}:=\left\{y\in V: \sum_{x\in S}P_{n+1}^{\bm{\eta}}(x, y)\ge U\right\}.\]
	Note that the evolving set process has two absorbing states: $\emptyset$ and $V$.
	Denote by $K_{P}$ the transition probability for the evolving set process $(S_n: n\in \mathbb N\cup \{0\})$ when the transition matrix for the Markov chain is $P\in\{P_n^{\bm{\eta}}: n\in\mathbb{N}\}$. Doob's transform of the evolving set process conditioned to stay nonempty 
	is defined by the transition kernel
	\[\widehat{K}_{P}(A, B)=\frac{\pi(B)}{\pi(A)} K_{P}(A, B).\]
	For more discussion about evolving sets, we refer to \cite{MP05} by Morris and Peres, \cite[Section 6.7]{LP16}, or \cite[Section 17.4]{LP17}.
	
	Now, the Diaconis-Fill coupling $\widehat{\mathbb{P}}^{\bm{\eta}}$ is a coupling between the Markov chain $X_n$ and Doob's transform of the evolving set process, defined as follows. Let $\mathrm{DF}=\{(x, A): x\in A, A\subset V\}$. We define the Diaconis-Fill transition kernel on $\mathrm{DF}$ as 
	\[
	\widehat{P}^{\bm{\eta}}_{n+1}((x, A), (y, B)):=\frac{P_{n+1}^{\bm{\eta}}(x,y) K_{P_{n+1}^{\bm{\eta}}}(A, B)}{\sum_{z\in A}P_{n+1}^{\bm{\eta}}(z,y)},\quad \forall (x, A), (y, B)\in\mathrm{DF}.
	\]
	Let $((X_n, S_n): n\in \mathbb N\cup \{0\})$ be the Markov chain with initial state $(x, \{x\})\in \mathrm{DF}$ and transition kernel $\widehat{P}^{\bm{\eta}}_{n+1}$ from time $n$ to $n+1$. Then, the following properties hold (see Theorem 17.23 of \cite{LP17} for a proof): 
	\begin{itemize}
		\item The chain $(X_n: n\in \mathbb N\cup \{0\})$ has transition kernels $\{P_{n+1}:n\in \mathbb N\cup \{0\}\}$.
		\item The chain $(S_n: n\in \mathbb N\cup \{0\})$ has transition kernels $\{\widehat{K}_{P_{n+1}}: n\in \mathbb N\cup \{0\}\}$.
		\item For any $y\in S_n$, we have 
		\begin{equation} \label{eq:DF_key}
			\widehat{\mathbb{P}}^{\bm{\eta}}(X_n=y\;|\; S_0, S_1, \ldots , S_n)= {|S_n|}^{-1}.
		\end{equation} 
	\end{itemize}
	Throughout the proof, we will write \smash{$\widehat{\mathbb{P}}^{\bm{\eta}}$} for the probability measure arising from the Diaconis-Fill coupling with initial state $(o,\{o\})$ when the entire evolution of the environment is given by $\bm{\eta}$.
	Then, we will use \smash{$\widehat{\mathbb{P}}$} to denote the annealed probability measure with respect to $\bm{\eta}$ when the initial bond configuration is given by $\pi_p$.
	We will use $\widehat{\mathbb{E}}^{\bm{\eta}}$ and $\widehat{\mathbb{E}}$ to denote the corresponding expectations.
	
	For every subgraph $G'$ of $G$ and $S\subset V$, we denote $\partial_{G'} S$ as the \textbf{edge boundary} of $S$ in $G'$, i.e., the set of edges in $E(G')$ that have one endpoint in $S$ and the other endpoint in $V \setminus S$. We will also view $\eta_{t}$ as a subgraph of $G$ with vertex set $V$. We now prove a crucial property related to evolving sets. This property holds for random walks on dynamical percolation of general graphs.

	\begin{lemma}\label{lem:evo}
		Let $(X_t, \eta_t)_{t \geq 0}$ be a random walk on the dynamical percolation of an arbitrary infinite graph $G$, where every vertex has a degree bounded by $d$. Then, the following estimate holds:
		\begin{equation}\label{eq:Sn+1}
			\widehat{\mathbb{E}}^{\bm{\eta}}\left[\left.|S_{n+1}|^{-1/2}\;\right|\;S_n\right]\leq \exp\left(-{\Phi_{S_n}^2}/{6}\right)|S_n|^{-1/2},\quad \forall n\in\mathbb{N}\cup\{0\}.
		\end{equation}
		Here, $\Phi_{S_n}\equiv \Phi^{\bm{\eta}}_{S_{n}}$ is defined as 
		\begin{align}\label{eq.defSn}
			\Phi_{S_{n}} & :=\frac{1}{\left|S_{n}\right|} \sum_{x \in S_{n}} \sum_{y \in S_{n}^{c}} \widehat{\mathbb{P}}^{\bm{\eta}}\left(X_{n+1}=y | X_{n}=x\right),
		\end{align}
		and it satisfies
		\begin{align}\label{eq:Phietan}
			\Phi_{S_n} \geq \frac{1}{ d \ee \left|S_{n}\right|} \int_{n}^{n+1}\left|\partial_{\eta_{t}} S_{n}\right| \d t .
		\end{align}
	\end{lemma}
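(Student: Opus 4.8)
The plan is to prove the two assertions in turn. For \eqref{eq:Phietan}, I would unpack the definition \eqref{eq.defSn} of $\Phi_{S_n}$ and relate the discrete transition probability $\widehat{\mathbb{P}}^{\bm\eta}(X_{n+1}=y\mid X_n=x)=P_{n+1}^{\bm\eta}(x,y)$ to the continuous-time dynamics on the unit time interval $[n,n+1]$. The key point is that a successful crossing of an edge $e=\{x,y\}$ during $[n,n+1]$ occurs at rate $\tfrac1d\,\eta_t(e)$ (rate $1$ for the jump attempt, probability $\tfrac1d$ of selecting $e$, and it must be open). Conditioning on the whole environment $\bm\eta$, the probability that at least one such successful crossing happens and the walk ends at $y$ is at least $e^{-1}$ times the expected number of such crossings, since the probability the walk has not left $x$ by time $t$ is at least $e^{-1}$ (the total jump rate is $1$). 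Summing over $x\in S_n$, $y\in S_n^c$, and noting that $\sum_{x\in S_n}\sum_{y\in S_n^c}\int_n^{n+1}\tfrac1d\eta_t(\{x,y\})\,\dd t=\tfrac1d\int_n^{n+1}|\partial_{\eta_t}S_n|\,\dd t$, dividing by $|S_n|$ yields \eqref{eq:Phietan}. One should be slightly careful that the "at least $e^{-1}$" survival factor and the "linearize the exponential" step are applied in a way that does not double count multiple crossings; using $1-e^{-\lambda}\ge \lambda e^{-\lambda}$ or simply a first-moment-from-below bound handles this cleanly.

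For \eqref{eq:Sn+1}, I would invoke the standard evolving-set machinery. By the Diaconis--Fill construction, $\widehat K_P(A,B)=\tfrac{\pi(B)}{\pi(A)}K_P(A,B)$ with $\pi\equiv 1$, so for any function $f$,
\[
\widehat{\mathbb E}^{\bm\eta}\!\left[f(|S_{n+1}|)\mid S_n\right]=\frac{1}{|S_n|}\,\mathbb E_{K_{P_{n+1}^{\bm\eta}}}\!\left[|S_{n+1}|\,f(|S_{n+1}|)\mid S_n\right].
\]
Taking $f(k)=k^{-1/2}$, the right-hand side becomes $|S_n|^{-1}\,\mathbb E_{K}[\,|S_{n+1}|^{1/2}\mid S_n]$. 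The classical bound (Morris--Peres; see \cite[Section 17.4]{LP17}) states that for a reversible chain with holding probability at least $e^{-1}$ at every state, $\mathbb E_K[\sqrt{|S_{n+1}|}\mid S_n]\le \sqrt{|S_n|}\,\bigl(1-\tfrac{\Phi_{S_n}^2}{2}\bigr)^{?}$ — more precisely one gets a factor of the form $\exp(-\Phi_{S_n}^2/c)$; I would reproduce the computation with the explicit constant. Using $\mathbb E_K[\sqrt{|S_{n+1}|}]\le \tfrac12\mathbb E_K[|S_{n+1}|/\sqrt{|S_n|}] + \tfrac12\mathbb E_K[\sqrt{|S_n|}\,\mathbf 1(|S_{n+1}|\le|S_n|)]+\cdots$ together with $\mathbb E_K[|S_{n+1}|\mid S_n]=|S_n|$ and concavity of the square root, one extracts the gap $\Phi_{S_n}$ from the discrepancy between $|S_{n+1}|$ and its mean; combining the pieces and using $1-x\le e^{-x}$ gives the stated $\exp(-\Phi_{S_n}^2/6)$.

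The main obstacle is getting the explicit constant in \eqref{eq:Sn+1} right: one needs the $e^{-1}$-laziness hypothesis $P_n^{\bm\eta}(x,x)\ge e^{-1}$ to control the overlap $\sum_x P(x,y)$ from below on the growth side and to ensure the martingale-like identity $\mathbb E_K[|S_{n+1}|]=|S_n|$ can be leveraged, and the number $6$ comes from a specific Cauchy--Schwarz/concavity estimate that I would need to track carefully rather than quote loosely. For \eqref{eq:Phietan} the only delicate point is the interchange between the continuous-time crossing description and the discrete kernel $P_{n+1}^{\bm\eta}$, which is routine once one conditions on $\bm\eta$ and uses that the walk's clock and the refresh clocks are independent Poisson processes.
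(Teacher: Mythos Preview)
Your proposal is correct and follows essentially the same route as the paper: for \eqref{eq:Sn+1} the paper converts the $\widehat K$-expectation of $|S_{n+1}|^{-1/2}$ into the $K$-expectation of $|S_{n+1}|^{1/2}$ via the Doob transform (equation (29) of \cite{MP05}) and then applies Lemma~3 of \cite{MP05} with laziness $\gamma=e^{-1}$, which is exactly the Morris--Peres bound you invoke and yields the constant $6$ via $\tfrac{e^{-2}}{2(1-e^{-1})^2}\ge\tfrac16$; for \eqref{eq:Phietan} the paper bounds $P_{n+1}^{\bm\eta}(x,y)$ from below by the probability that the clock rings exactly once in $[n,n+1]$ and at a time when $\{x,y\}$ is open, giving the same $\tfrac{1}{de}\int_n^{n+1}\eta_t(x,y)\,\dd t$ you obtain, followed by Fubini.
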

	\begin{proof} 
		A similar result for random walk on finite graphs has been established in Lemma 2.3 of \cite{PSS20}. Our proof follows a similar approach. By equation (29) of \cite{MP05}, we have
		\begin{align*}
			\widehat{\mathbb{E}}^{\bm{\eta}}\left[\frac{\left|S_{n+1}\right|^{-1 / 2}}{\left|S_{n}\right|^{-1 / 2}} \bigg | S_{n}\right] & =\widehat{\mathbb{E}}^{\bm{\eta}}_{P_{n+1}}\left[\frac{\left|S_{n+1}\right|^{-1 / 2}}{\left|S_{n}\right|^{-1 / 2}} \bigg | S_{n}\right] \\
			& =\mathbb{E}^{\bm{\eta}}_{P_{n+1}}\left[\frac{\left|S_{n+1}\right|}{\left|S_{n}\right|} \frac{\left|S_{n+1}\right|^{-1 / 2}}{\left|S_{n}\right|^{-1 / 2}} \bigg | S_{n}\right] \\
			& =\mathbb{E}^{\bm{\eta}}_{P_{n+1}}\left[\frac{\left|S_{n+1}\right|^{1 / 2}}{\left|S_{n}\right|^{1 / 2}} \bigg | S_{n}\right],
		\end{align*}
		where $\widehat{\mathbb{E}}^{\bm{\eta}}_{P}$ (resp.~$\mathbb{E}^{\bm{\eta}}_{P}$) denotes the expectation when $\{S_n\}$ has transition kernel $\widehat{K}_P$ (resp.~$K_P$), and the first equality is due to the definition of $\widehat{\mathbb{E}}^{\bm{\eta}}$. 
		By Lemma 3 of \cite{MP05} (with $\gamma=\ee^{-1}$) and using the definition \eqref{eq.defSn}, we obtain that
		\[\mathbb{E}^{\bm{\eta}}_{P_{n+1}}\left[\frac{\left|S_{n+1}\right|^{1 / 2}}{\left|S_{n}\right|^{1 / 2}} \bigg | S_{n}\right] \leq 1-\frac{\ee^{-2}}{2\left(1-\ee^{-1}\right)^{2}} \Phi_{S_{n}}^{2} \leq 1-\frac{\Phi_{S_{n}}^{2}}{6}\leq \exp\left[-\frac{\Phi_{S_n}^2}{6}\right] .\]
		This concludes \eqref{eq:Sn+1}.
		
		For the bound \eqref{eq:Phietan},	recall that 
		\[\eta_{t}(x, y)= \begin{cases}1, & \text{ if }\{x, y\} \text { is open in } \eta_{t}, \\ 0, & \text{ if }\{x, y\} \text { is closed in } \eta_{t}.\end{cases}\]
		For neighboring vertices $x$ and $y$, by considering the event that the random walk clock rings exactly once during $\{t\in[n,n+1]:\eta_t(x,y)=1\}$, we have that
		\[\widehat{\mathbb{P}}^{\bm{\eta}}\left(X_{n+1}=y \;|\; X_{n}=x\right)\geq \frac{1}{de}\int_{n}^{n+1} \eta_{t}(x, y) \dd t.\]
		Therefore, we obtain
		\[\sum_{x \in S_{n}} \sum_{y \in S_{n}^{c}} \widehat{\mathbb{P}}^{\bm{\eta}}\left(X_{n+1}=y \;|\; X_{n}=x\right) \geq \frac{1}{de} \sum_{x \in S_{n}} \sum_{y \in S_{n}^{c}} \int_{n}^{n+1} \eta_{t}(x, y) \dd t .\]
		Then, using Fubini's theorem and the fact that
		\[\left|\partial_{\eta_{t}} S_n\right|=\sum_{x \in S_{n}} \sum_{y \in S_{n}^{c}} \eta_{t}(x, y),\]
		we conclude \eqref{eq:Phietan}.
	\end{proof}
	
	We define a sequence of random variables
	\begin{equation}\label{eq:M0n}
		M_0:=1, \quad \text{and}\quad M_n:=|S_n|^{-1/2}\exp\left(\sum_{k=0}^{n-1}\frac{\Phi_{S_k}^2}{6}\right)\quad \forall n\in\mathbb{N}.
	\end{equation}
	Let $\mathscr{F}_{n}$ be the $\sigma$-algebra generated by the evolving sets up to time $n$. By Lemma \ref{lem:evo}, we have
	\[\widehat{\mathbb{E}}^{\bm{\eta}}[M_{n+1}|\mathscr{F}_n]\leq M_n,\]
	which implies that $\{M_n:n\in \mathbb{N}\cup\{0\}\}$ is a supermartingale with respect to $\{\mathscr{F}_n\}$. Thus, we have $\widehat{\mathbb{E}}^{\bm{\eta}}[M_n]\le \widehat{\mathbb{E}}^{\bm{\eta}} [M_0] =1 $. Applying Markov's inequality yields that for any $\varepsilon>0$,
	\begin{equation*}
		\widehat{\mathbb{P}}^{\bm{\eta}}(M_n \geq 2/\varepsilon)\leq \frac{\varepsilon }{2}\mathbb{E}^{\bm{\eta}}[M_n]=\frac{\varepsilon}{2}.
	\end{equation*}
	In the special case where the initial configuration is $\pi_p$, it also gives that
	\begin{equation}\label{eq:Mnlb}
		\widehat{\mathbb{P}}(M_n \geq 2/\varepsilon)\leq \varepsilon/2.
	\end{equation}
	
	\begin{lemma}\label{lem:Phi2}	
		Under the setting of Theorem~\ref{thm:Supercritical}, suppose $\eta_{0}$ has the stationary distribution $\pi_{p}$. Then, there exist constanats $c_0, c_1>0$ such that for any $n \in \mathbb{N} \cup\{0\}$, 
		\begin{equation}\label{eq:PSn_G}		
			\widehat{\mathbb{P}}\left( \int_{n}^{n+1}\left|\partial_{\eta_{t}} S_n\right| \dd t  \geq c_1 |S_n| \right) \geq c_0. 
		\end{equation}									
	\end{lemma}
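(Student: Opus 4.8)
The plan is to prove the displayed estimate directly, since it is stronger than, and is exactly what later yields, a lower bound on $\Phi_{S_n}$ via \eqref{eq:Phietan}. Throughout I would condition on the $\sigma$-algebra $\mathscr{G}_{n}$ generated by the environment $\bm{\eta}|_{[0,n]}$ together with the auxiliary uniforms driving the evolving-set chain up to time $n$: this makes both $S_{n}$ and the configuration $\eta_{n}$ measurable, and, by the Markov property of $\bm{\eta}$, conditionally on $\mathscr{G}_{n}$ the restriction $\bm{\eta}|_{[n,\infty)}$ is a dynamical percolation started from $\eta_{n}$ that evolves independently across edges. The one structural fact used at every step is nonamenability: since $G$ is $d$-regular with Cheeger constant $\Phi(G)>0$, every finite $W$ satisfies $|\partial_{E}W|\ge \Phi(G)\,d\,|W|$, so candidate boundary edges of $S_{n}$ are never scarce; the content is that a positive fraction of them is genuinely usable on $[n,n+1]$, with probability bounded below uniformly in $n$ and $\mu$. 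It is natural to split the argument at $\mu=1$.

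For $\mu\ge 1$ the argument is soft. For each $e\in\partial_{E}S_{n}$, writing $Y_{e}:=\int_{n}^{n+1}\eta_{t}(e)\,\dd t\in[0,1]$, conditioning on $\eta_{n}(e)$ gives $\widehat{\mathbb{E}}[Y_{e}\mid\mathscr{G}_{n}]\ge p\bigl(1-\tfrac{1-e^{-\mu}}{\mu}\bigr)\ge p/e$ (the last step since $x\mapsto(1-e^{-x})/x$ is decreasing), and the $Y_{e}$, $e\in\partial_{E}S_{n}$, are conditionally independent given $\mathscr{G}_{n}$ because distinct edges evolve independently after time $n$. With at least $\Phi(G)d|S_{n}|\ge\Phi(G)d$ of them, Hoeffding's inequality gives $\int_{n}^{n+1}|\partial_{\eta_{t}}S_{n}|\,\dd t=\sum_{e\in\partial_{E}S_{n}}Y_{e}\ge \tfrac{p}{2e}\,\Phi(G)d\,|S_{n}|$ with conditional probability at least $1-\exp\bigl(-\tfrac{p^{2}}{2e^{2}}\Phi(G)d\bigr)$, which is \eqref{eq:PSn_G} in this regime.

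For $\mu<1$ the matter is genuinely harder. An edge open at time $n$ survives refresh-free on all of $[n,n+1]$ with conditional probability $e^{-\mu}>e^{-1}$, independently over edges, so a Chernoff bound reduces the claim to showing $\widehat{\mathbb{P}}\bigl(|\partial_{\eta_{n}}S_{n}|\ge a|S_{n}|\bigr)\ge p_{0}$ for some constants $a,p_{0}>0$. By the Diaconis--Fill identity \eqref{eq:DF_key}, $X_{n}$ is uniform on $S_{n}$; writing $\deg^{\omega}(x)$ for the number of open edges at $x$ in $\omega$ and $\deg^{\omega}_{W}(x)$ for those landing in $W$, this gives $\widehat{\mathbb{E}}\bigl[|\partial_{\eta_{n}}S_{n}|/|S_{n}|\bigr]=\widehat{\mathbb{E}}\bigl[\deg^{\eta_{n}}(X_{n})\bigr]-\widehat{\mathbb{E}}\bigl[\deg^{\eta_{n}}_{S_{n}}(X_{n})\bigr]$. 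The first term equals $pd$, because the $X$-marginal of $\widehat{\mathbb{P}}$ is the random walk and the environment seen from the particle is stationary with marginal $\pi_{p}$ (\Cref{lem.ViewStationary}). For the second, $\sum_{x\in S_{n}}\deg^{\eta_{n}}_{S_{n}}(x)=2\,(\#\text{ open edges inside }S_{n})\le d|S_{n}|-|\partial_{E}S_{n}|\le d(1-\Phi(G))|S_{n}|$ by nonamenability, so $\widehat{\mathbb{E}}\bigl[|\partial_{\eta_{n}}S_{n}|/|S_{n}|\bigr]\ge d\bigl(p-1+\Phi(G)\bigr)$; combined with the deterministic bound $|\partial_{\eta_{n}}S_{n}|/|S_{n}|\le d$ and a reverse Markov inequality, this already proves \eqref{eq:PSn_G} whenever $p>1-\Phi(G)$.

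The remaining case, $p$ close to $p_{c}$ (so that $p-1+\Phi(G)$ may be $\le 0$), is where I expect the real obstacle, and where the structure of supercritical percolation must enter. The idea I would pursue: with $\P_{p}(|\clt_{o}|=\infty)>0$ the root lies in the infinite cluster of $\eta_{n}$, and on a further event of probability close to $1$ — by \cite{HermonHutchcroft2021}, or, for $G=\mathbb{T}_{d}$, by the explicit Galton--Watson computation that Section~\ref{subsec_tree} carries out first — this infinite cluster has anchored expansion with constants $\iota_{0}>0$, $K_{0}<\infty$ fixed once and for all; on the event that $S_{n}$ is, up to a bounded correction, a connected subset of that cluster containing $o$, one then gets $|\partial_{\eta_{n}}S_{n}|\ge\min(\iota_{0},K_{0}^{-1})|S_{n}|$, since every edge of the cluster is open. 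The hard point — the crux of the whole lemma — is the geometric one: to show that, with probability bounded below uniformly in $n$ and $\mu<1$, the evolving set $S_{n}$ really is essentially connected and sits inside the supercritical cluster of $\eta_{n}$, despite being strongly correlated with $\bm{\eta}$. For small $\mu$ one would argue that $S_{n}\subseteq\bigcup_{t\le n}\clt_{o}(\eta_{t})$, which differs from a single slightly-supercritical cluster only through the rare edges that open and reclose within a unit interval, and that the evolving-set updates preserve an essentially-connected core about $X_{n}$; making this robust — the evolving set need not be connected in general — is the technical heart, which is precisely why the regular tree, where cluster geometry and its expansion are transparent, is treated first and said to contain all the key ideas.
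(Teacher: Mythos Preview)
Your partial arguments for $\mu\ge 1$ and for $p>1-\Phi(G)$ are correct and self-contained, but the heart of the lemma is precisely the regime you leave open, namely $p$ just above $p_c$ with $\mu$ small, and there your proposed route via anchored expansion of the infinite cluster and approximate connectedness of $S_n$ is not the right direction. The evolving set $S_n$ is produced by a nonlocal thresholding rule and has no reason to be connected, to contain the fixed anchor $o$ (it contains $X_n$, not $o$), or to sit inside the infinite cluster of $\eta_n$; it can scatter vertices across many finite components as well as pieces of the infinite one. Anchored expansion, which by definition concerns connected sets through a fixed vertex, simply does not apply, and there is no evident mechanism by which the evolving-set dynamics would enforce the geometric hypothesis you need.

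The key idea the paper uses, and which you are missing, is the Burton--Keane trifurcation inequality \eqref{eq:triine}: for \emph{any} finite $S$ and any configuration $\omega$, one has $|\partial_\omega S|\ge(\text{number of trifurcation points of }\omega\text{ in }S)$, with no connectivity assumption on $S$ whatsoever. Combining this with the Diaconis--Fill identity \eqref{eq:DF_key} and the stationarity of the environment seen from the particle (\Cref{lem.ViewStationary}) gives directly
\[
\widehat{\mathbb{E}}\!\left[\frac{|\partial_{\eta_n} S_n|}{|S_n|}\right]\ \ge\ \widehat{\mathbb{P}}\bigl(X_n\text{ is a trifurcation point of }\eta_n\bigr)\ =\ \widehat{\mathbb{P}}\bigl(o\text{ is a trifurcation point of }\eta_0\bigr),
\]
and this last probability is strictly positive for every $p>p_c$: on $\mathbb T_d$ it equals $(p\tilde\theta_p)^3$, while on a general nonamenable transitive unimodular graph one passes (\Cref{lem:trifucation_general}, due to Benjamini--Lyons--Schramm) to an automorphism-invariant subprocess $\zeta\le\eta$ for which the trifurcation probability at $o$ is known to be positive. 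A reverse Markov inequality using the trivial bound $|\partial_{\eta_t}S|/|S|\le d$, together with $\widehat{\mathbb{E}}[|\partial_{\eta_t}S_n|\mid S_n]\ge p\,\widehat{\mathbb{E}}[|\partial_{\eta_n}S_n|\mid S_n]$ for $t\in[n,n+1]$, then yields \eqref{eq:PSn_G}. No case-splitting in $\mu$ or in $p$ is needed.
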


	This lemma will be proved as \Cref{lem:Phi} in the tree case, and will be proved at the end of \Cref{subsec_general} in the general case. 
	Assuming Lemma~\ref{lem:Phi2}, we first show that the volume of $S_n$ grows exponentially fast in Lemma~\ref{lem::ES_exp}, which is then used to complete the proof of Theorem~\ref{thm:Supercritical}. 
	
	\begin{lemma}\label{lem::ES_exp}
		Using the same notations as in Lemma~\ref{lem:Phi2}, we have  
		\begin{equation}\label{eqn::ESP_keyestimate_G}
			\widehat{\mathbb{P}}\left(|S_n|>\left(\frac{c_0}{4}\right)^{2} \exp\left[\frac{c_0c_1^2 }{6 e^2d^2}n\right]\right)\geq \frac{c_0}{4},\quad \forall n\in\mathbb{N}.
		\end{equation}
	\end{lemma}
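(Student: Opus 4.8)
\textbf{Proof proposal.} The plan is to feed Lemma~\ref{lem:Phi2} into the supermartingale estimate \eqref{eq:Mnlb}. First I would record that, by \eqref{eq:Phietan}, on the event $\bigl\{\int_n^{n+1}|\partial_{\eta_t}S_n|\,\dd t\ge c_1|S_n|\bigr\}$ one has $\Phi_{S_n}\ge c_1/(ed)$; hence, setting $A_k:=\{\Phi_{S_k}\ge c_1/(ed)\}$, Lemma~\ref{lem:Phi2} yields $\widehat{\mathbb P}(A_k)\ge c_0$ for every $k\in\mathbb N\cup\{0\}$, uniformly in $k$.

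Next I would run a first–moment (reverse Markov) argument on the count $N_n:=\sum_{k=0}^{n-1}\mathbf 1_{A_k}$. Since $\widehat{\mathbb E}[N_n]=\sum_{k=0}^{n-1}\widehat{\mathbb P}(A_k)\ge c_0 n$ while $N_n\le n$ deterministically, writing $\widehat{\mathbb E}[N_n]\le n\,\widehat{\mathbb P}(N_n\ge c_0 n/2)+c_0 n/2$ gives
\[
\widehat{\mathbb P}\Bigl(N_n\ge \tfrac{c_0 n}{2}\Bigr)\ge \frac{c_0}{2}.
\]
Independently, applying \eqref{eq:Mnlb} with $\varepsilon=c_0/2$ gives $\widehat{\mathbb P}(M_n\ge 4/c_0)\le c_0/4$, i.e.\ $\widehat{\mathbb P}(M_n<4/c_0)\ge 1-c_0/4$. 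By the union bound the intersection of $\{N_n\ge c_0 n/2\}$ and $\{M_n<4/c_0\}$ has probability at least $c_0/2-c_0/4=c_0/4$.

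On this intersection I would unwind the definition \eqref{eq:M0n}: $M_n<4/c_0$ rearranges to $|S_n|>(c_0/4)^2\exp\bigl(\tfrac13\sum_{k=0}^{n-1}\Phi_{S_k}^2\bigr)$, while on $\{N_n\ge c_0 n/2\}$ we bound $\sum_{k=0}^{n-1}\Phi_{S_k}^2\ge N_n\,(c_1/(ed))^2\ge \tfrac{c_0 n}{2}\cdot\tfrac{c_1^2}{e^2 d^2}$, so that $\tfrac13\sum_{k=0}^{n-1}\Phi_{S_k}^2\ge \tfrac{c_0 c_1^2}{6 e^2 d^2}\,n$. Combining the two displays yields $|S_n|>(c_0/4)^2\exp\bigl(\tfrac{c_0 c_1^2}{6 e^2 d^2}n\bigr)$ on an event of probability at least $c_0/4$, which is exactly \eqref{eqn::ESP_keyestimate_G}.

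Given Lemma~\ref{lem:Phi2}, this is essentially bookkeeping, so I do not expect a serious obstacle here; the one point deserving care is that $\widehat{\mathbb P}(A_k)\ge c_0$ must hold \emph{for every} $k$ with the same constant, which is precisely the form in which Lemma~\ref{lem:Phi2} is stated, and that the reverse–Markov step genuinely uses the deterministic bound $N_n\le n$. The constants are chosen so that the factor $2$ coming from squaring $M_n^{-1/2}$ and the factor $1/2$ from the count $N_n\ge c_0 n/2$ combine to produce exactly the exponent $c_0 c_1^2/(6 e^2 d^2)$ in the statement.
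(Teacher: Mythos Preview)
Your proposal is correct and follows essentially the same argument as the paper: define the indicators $I_k=\mathbf 1\{\Phi_{S_k}\ge c_1/(ed)\}$, use Lemma~\ref{lem:Phi2} together with \eqref{eq:Phietan} to get $\widehat{\mathbb E}[I_k]\ge c_0$, apply the reverse Markov inequality to obtain $\widehat{\mathbb P}(\sum_{k<n}I_k\ge c_0 n/2)\ge c_0/2$, combine with \eqref{eq:Mnlb} at $\varepsilon=c_0/2$, and unwind the definition of $M_n$. The constants and the intersection bound match the paper's proof exactly.
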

	
	\begin{proof}
		We define the indicator functions
		\[I_{k}:= \mathbf 1(\Phi_{S_{k}} \geq c_1/(ed))\]
		Then, by \eqref{eq:Phietan} and \eqref{eq:PSn_G}, we have that
		\begin{align}\label{eq:EIk}
			\widehat{\mathbb{E}}[I_k]&=\widehat{\mathbb{P}}\left(\Phi_{S_{k}} \geq c_1/(ed)\right)\geq\widehat{\mathbb{P}}\left( \int_k^{k+1}|\partial_{\eta_t} S_k| \dd t \geq c_1|S_k| \right)\geq c_0,\quad  \forall k\in\mathbb{N}\cup\{0\}.
		\end{align}
		Therefore, we have that 
		\begin{align*}
			c_0 n\leq \widehat{\mathbb{E}}\left[\sum_{k=0}^{n-1}I_k\right]&=\widehat{\mathbb{E}}\left[\sum_{k=0}^{n-1}I_k\cdot \mathbf{1}_{\left\{\sum_{k=0}^{n-1}I_k\geq \frac{c_0 n}{2}\right\}}\right]+\widehat{\mathbb{E}}\left[\sum_{k=0}^{n-1}I_k\cdot \mathbf{1}_{\left\{\sum_{k=0}^{n-1}I_k<\frac{c_0 n}{2}\right\}}\right]\\
			&\leq n\widehat{\mathbb{P}}\left(\sum_{k=0}^{n-1}I_k\geq \frac{c_0 n}{2}\right)+\frac{c_0 n}{2},
		\end{align*}
		which implies that
		\begin{equation}\label{eq:sumI_k}
			\widehat{\mathbb{P}}\left(\sum_{k=0}^{n-1}I_k\geq \frac{c_0 n}{2}\right)\geq \frac{c_0}{2},\quad \forall n\in\mathbb{N}.
		\end{equation}
		Taking $\varepsilon={c_0}/{2}$ in \eqref{eq:Mnlb}, we get that
		\[\widehat{\mathbb{P}}\left(|S_n|>\left(\frac{c_0}{4}\right)^{2}\exp\left[\sum_{k=0}^{n-1}\frac{\Phi_{S_k}^2}{3}\right]\right)\geq 1-\frac{c_0}{4},\quad \forall n\in\mathbb{N}.\]
		Combined with with \eqref{eq:sumI_k}, this implies that
		\[\widehat{\mathbb{P}}\left(|S_n|>\left(\frac{c_0}{4}\right)^{2} \exp\left[\sum_{k=0}^{n-1}\frac{\Phi_{S_k}^2}{3}\right]\text{ and }\sum_{k=0}^{n-1}I_k\geq \frac{c_0 n}{2}\right)\geq \frac{c_0}{4},\]
		which concludes \eqref{eqn::ESP_keyestimate_G} by the definition of $I_k$.
	\end{proof}
	
	\begin{proof}[Proof of Theorem~\ref{thm:Supercritical}]
		Recall that $B(o,k)$ denotes the $k$-neighborhood of $o$ on $G$.
		Let $((X_n, S_n): n\ge 0)$ be the Diaconis-Fill coupling defined above. By \eqref{eq:DF_key}, we have 
		\begin{equation*}	\widehat{\mathbb{P}}^{\bm{\eta}}\left(X_n\in B(o,k)\;\Big |\; |S_n|> (8/c_0)d^{k} \right)\leq\frac{|B(o,k)|}{(8/c_0)d^{k}}\leq\frac{c_0}{8},
		\end{equation*}
		which implies that 
		\begin{align*}
			\widehat{\mathbb{P}}^{\bm{\eta}}\left(X_n\in B(o,k)\right)&\leq \widehat{\mathbb{P}}^{\bm{\eta}}\left(X_n\in B(o,k)\;\Big |\; |S_n|> (8/c_0)d^{k} \right)+\widehat{\mathbb{P}}^{\bm{\eta}}\left(|S_n|\leq (8/c_0)d^{k}\right)\nonumber\\
			&\leq\widehat{\mathbb{P}}^{\bm{\eta}}\left(|S_n|\leq (8/c_0)d^{k}\right)+\frac{c_0}{8}.
		\end{align*}
		In the special case where the initial configuration is $\pi_p$, we obtain that
		\begin{equation}\label{eqn::EPS_aux}
			\widehat{\mathbb{P}}\left(X_n\in B(o,k)\right)\leq \widehat{\mathbb{P}}\left(|S_n|\leq (8/c_0)d^{k}\right)+\frac{c_0}{8}.
		\end{equation}
		By setting
		\[k=k(n)=\left\lfloor\frac{1}{\log d}\left(\log\frac{c_0^3}{128}+\frac{c_0c_1^2}{6e^2d^2}n\right)\right\rfloor\]
		in~\eqref{eqn::ESP_keyestimate_G}, 
		we get that
		\[\widehat{\mathbb{P}}\left(|S_n|\leq (8/c_0)d^{k(n)}\right)\leq 1-\frac{c_0}{4} \quad \forall n\in\mathbb{N}.\]
		Combined  with~\eqref{eqn::EPS_aux}, this gives us 
		\begin{align}\label{eq:XinBall}
			\widehat{\mathbb{P}}\left(X_n\in B(o,k(n))\right)\leq  1-\frac{c_0}{8}.
		\end{align}
		Applying Lemma~\ref{lem:existence_speed} and \eqref{eq:XinBall} in the case where $\eta_0$ has distribution $\pi_p$, we otain
		\begin{equation}\label{eqn::super_lowerbound_G}
			v_p(\mu)\ge \frac{c_0c_1^2}{6e^2d^2\log d}. 
		\end{equation}
		Finally, by Proposition~\ref{prop:stationary_arbitrary}, this bound extends to a general initial bond configuration.	 
	\end{proof}
	
	\subsection{The tree case}\label{subsec_tree}

	Let $G=\mathbb{T}_{d}=(V\left(\mathbb{T}_{d}\right),E\left(\mathbb{T}_{d}\right))$ be an infinite regular tree where every vertex has degree $d\ge 3$. It is known that $p_c(\mathbb{T}_{d})=b^{-1}$, where $b:=d-1$. 
	Let $o$ be an arbitrary vertex in $V$. Define
	\begin{equation*}
		\theta_p:=\mathbb{P}(\exists\text{ infinite open path containing } o  \text{ in the Bernoulli-$p$ bond percolation of } \mathbb{T}_{d}).
	\end{equation*}
	Let $\tilde{\mathbb{T}}_{b}$ be an infinite $b$-ary tree, i.e., an infinite tree with a root $o$ having degree $b$ and all other vertices having degree $d$. Then, define
	\begin{equation*}
		\tilde{\theta}_p:=\mathbb{P}(\exists\text{ infinite open path containing } o  \text{ in the Bernoulli-$p$ bond percolation of } \tilde{\mathbb{T}}_{b}).
	\end{equation*}
	Note that $\theta_p$ and $\tilde{\theta}_p$ are related by the equation
	\begin{equation*}
		1-\theta_p=(1-p\tilde{\theta}_p)^{d}. 
	\end{equation*}
	In particular, we have 
	$\theta_p\sim dp\tilde{\theta}_p\text{ as }p\downarrow p_c$.
	This subsection focuses on proving the following theorem, which implies the desired lower bound in \eqref{eq:Supercritical}. 
	
	
	
	\begin{theorem}\label{thm::supercritical_quantitative}
		For any $p>p_c=1/b$, the speed for the random walk on dynamical percolation in $\mathbb{T}_{d}$ with any initial bond configuration $\eta_0$ satisfies
		\begin{equation}\label{eq:lowerbound_tree}
			v_p(\mu)\geq\frac{p^3(p\tilde{\theta}_p)^9}{48 e^2d^3\log d}.
		\end{equation}
		In particular, there exists a constant $c_{\ref{thm::supercritical_quantitative}}=c_{\ref{thm::supercritical_quantitative}}(d)>0$ depending only on $d$ such that	\begin{equation}\label{eq:lowerv9}
			v_p(\mu)\geq c_{\ref{thm::supercritical_quantitative}}(p-p_c)^9 \ \ \text{ as }\ \ p\downarrow p_c.\end{equation}
	\end{theorem}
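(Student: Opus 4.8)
The plan is to verify the key input \Cref{lem:Phi2} for the regular tree $\mathbb T_d$, since all the remaining machinery (the Diaconis--Fill coupling, the supermartingale $M_n$, \Cref{lem::ES_exp}, and the passage from ball-escape probabilities to a positive speed) is already in place and graph-independent. Once \Cref{lem:Phi2} holds on $\mathbb T_d$ with explicit constants $c_0,c_1$, inequality \eqref{eqn::super_lowerbound_G} gives $v_p(\mu)\ge c_0 c_1^2/(6e^2 d^2\log d)$, and tracking the constants through the tree-specific estimate will yield \eqref{eq:lowerbound_tree}; then \eqref{eq:lowerv9} follows from $\theta_p\sim dp\tilde\theta_p$ as $p\downarrow p_c$ together with the classical fact that $\tilde\theta_p\asymp (p-p_c)$ for the $b$-ary tree near criticality. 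So the real content is a lower bound, valid with probability bounded below, on the \emph{integrated edge boundary} $\int_n^{n+1}|\partial_{\eta_t}S_n|\,\dd t$ in terms of $|S_n|$.

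First I would exploit stationarity: under $\widehat{\mathbb P}$ with $\eta_0\sim\pi_p$, the pair $(S_n,\eta)$ restricted to the time window $[n,n+1]$ has, for the purpose of \eqref{eq:PSn_G}, the same law as the $n=0$ case after re-rooting via \Cref{prop:stationary_arbitrary} and \Cref{lem.ViewStationary}; so it suffices to treat $S_0=\{o\}$ is false---$S_n$ is a genuinely large random set---so instead I would argue \emph{conditionally on $S_n=S$} for an arbitrary finite $S\subset V$. The point is that $\partial_{\eta_t}S$ only sees the edges in the (deterministic) edge boundary $\partial_G S$ of $S$ in $\mathbb T_d$, and on a tree $|\partial_G S|\ge (d-2)|S|+2\ge |S|$ for any finite $S$ (the tree has Cheeger-type isoperimetry $\Phi(\mathbb T_d)\ge (d-2)/d$). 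At a fixed time each such edge is open with probability $p$ independently of $S_n$ when $\eta_0\sim\pi_p$ and of the coupling randomness; more care is needed because the Diaconis--Fill coupling and the history of $\eta$ on $[0,n]$ correlate $S_n$ with $\eta_n$, but the refreshing of edges in $\partial_G S$ during $[n,n+1]$ is independent of $\mathscr F_n$ and of $\eta_n$ on those edges after their first refresh. I would therefore condition on $S_n=S$ and on $\eta_n|_{\partial_G S}$, and use that each edge $e\in\partial_G S$ is refreshed at rate $\mu$; on the event (probability $\ge 1-e^{-\mu}\wedge$ something, or one can just keep edges that are already open) an edge spends a positive fraction of $[n,n+1]$ open with the Bernoulli-$p$ marginal. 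A clean way: let $Y_e:=\int_n^{n+1}\eta_t(e)\,\dd t$; then $\widehat{\mathbb E}[Y_e\mid \mathscr F_n]$ is bounded below by a constant $c(p,\mu)>0$ for every $e\in\partial_G S$ uniformly in $S$ (worst case: $e$ closed at time $n$, but it becomes open after an Exp$(\mu)$-time refresh with prob $p$ and then stays open for an Exp$(\mu)$-time), and $\sum_{e\in\partial_G S}Y_e=\int_n^{n+1}|\partial_{\eta_t}S|\,\dd t$. Since $|\partial_G S|\ge |S|$, we get $\widehat{\mathbb E}\big[\int_n^{n+1}|\partial_{\eta_t}S_n|\,\dd t\,\big|\,S_n\big]\ge c(p,\mu)|S_n|$. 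To convert this first-moment bound into the probability bound \eqref{eq:PSn_G}, I would combine it with the deterministic upper bound $\int_n^{n+1}|\partial_{\eta_t}S_n|\,\dd t\le |\partial_G S_n|\le (d-2)|S_n|+2\le (d-1)|S_n|$ (for $|S_n|\ge 1$) and apply the reverse Markov (Paley--Zygmund-type) inequality: if $0\le Z\le M|S_n|$ and $\widehat{\mathbb E}[Z\mid S_n]\ge c|S_n|$, then $\widehat{\mathbb P}(Z\ge (c/2)|S_n|\mid S_n)\ge c/(2M)$. Taking expectations gives \eqref{eq:PSn_G} with $c_1=c(p,\mu)/2$ and $c_0=c(p,\mu)/(2(d-1))$.

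The arithmetic to reach the stated constant $p^3(p\tilde\theta_p)^9/(48e^2d^3\log d)$ suggests the authors do something sharper than the crude ``worst-case closed edge'' bound above---likely they restrict $S_n$ to the \emph{infinite-cluster part} so that a definite $\tilde\theta_p$-fraction of boundary edges lie on infinite open rays and hence are open a constant fraction of the time with a $(p\tilde\theta_p)$-type weight, with the power $9$ coming from iterating/squaring such events (three factors from $\Phi^2_{S_n}$ entering $M_n$ as a square, another from an $L^1$-to-probability conversion, etc.). So in the final writeup I would, rather than using all of $\partial_G S_n$, keep only those boundary edges each of which sees an infinite open path in $\eta_n$ away from $S_n$; a Borel--Cantelli / second-moment argument shows their number is $\ge (p\tilde\theta_p)|S_n|$ with probability $\ge$ const, using the tree structure to get independence across distinct boundary edges. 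The main obstacle, and the place where the tree hypothesis is essential, is precisely this independence and the isoperimetric lower bound $|\partial_G S|\gtrsim |S|$: on a general nonamenable graph both still hold (nonamenability gives $|\partial_G S|\ge \Phi(G)d|S|$, and the Lyons--Schramm stationarity replaces the re-rooting), which is why the general case in \Cref{subsec_general} only requires ``necessary modifications''—but on the tree one gets the clean explicit constants above. I would therefore present this as a self-contained lemma (call it \Cref{lem:Phi}) mirroring \Cref{lem:Phi2}, prove it by the conditioning-on-$S_n$ / reverse-Markov scheme just described, and then simply quote \Cref{lem::ES_exp} and the already-written proof of \Cref{thm:Supercritical} to conclude \eqref{eq:lowerbound_tree} and \eqref{eq:lowerv9}.
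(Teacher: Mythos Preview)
Your overall plan---verify \Cref{lem:Phi2} on $\mathbb T_d$ with explicit $c_0,c_1$ and then invoke \eqref{eqn::super_lowerbound_G}---matches the paper exactly, and you are right that the power $9$ arises as $c_0c_1^2$ with $c_0,c_1$ each carrying a factor $(p\tilde\theta_p)^3$. But the way you propose to get those constants has real gaps.

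\textbf{The crude bound is $\mu$-dependent.} Your first route bounds $\widehat{\mathbb E}[Y_e\mid\mathscr F_n]$ from below by some $c(p,\mu)$ using the worst case where $e$ is closed at time $n$. In that case $\mathbb E[\eta_t(e)\mid\eta_n(e)=0]=p(1-e^{-\mu(t-n)})$, so $c(p,\mu)\asymp p\mu$ as $\mu\to 0$. Plugging this into \eqref{eqn::super_lowerbound_G} gives a speed lower bound vanishing with $\mu$, which is precisely what the theorem forbids. You recognize this and look for something sharper, but the sharper idea you sketch is not the right one.

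\textbf{What the paper actually does.} The key device is \emph{trifurcation points} plus the Burton--Keane inequality: for any finite $S$ and any configuration $\eta$,
\[
|\partial_\eta S|\;\ge\;\#\{\text{trifurcation points of }\eta\text{ in }S\}+2.
\]
On $\mathbb T_d$ one has $\P_p(x\text{ is a trifurcation point})\ge (p\tilde\theta_p)^3$, so for a \emph{deterministic} $S$ one gets $\widehat{\mathbb E}[|\partial_{\eta_t}S|]\ge |S|(p\tilde\theta_p)^3$, and your reverse-Markov step finishes that case. For the random $S_n$, the paper first observes that an edge open at time $n$ remains open at time $t\in[n,n+1]$ with probability at least $p$ (regardless of $\mu$), so it suffices to bound $\widehat{\mathbb E}[|\partial_{\eta_n}S_n|/|S_n|]$. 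This is the step that eliminates the $\mu$-dependence you ran into.

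\textbf{Handling the $S_n$--$\eta_n$ correlation.} Here is the trick you are missing. By the Diaconis--Fill property \eqref{eq:DF_key},
\[
\frac{1}{|S_n|}\sum_{x\in S_n}\mathbf 1(x\text{ is a trifurcation point of }\eta_n)
=\widehat{\mathbb P}^{\bm\eta}\!\left(X_n\text{ is a trifurcation point of }\eta_n\,\middle|\,S_0,\dots,S_n\right),
\]
so after taking $\widehat{\mathbb E}$ the quantity reduces to $\widehat{\mathbb P}(X_n\text{ is a trifurcation point of }\eta_n)$. Now the stationarity of the environment seen from the particle (\Cref{lem.ViewStationary}, which you allude to but do not deploy) gives that this equals $\widehat{\mathbb P}(X_0\text{ is a trifurcation point of }\eta_0)\ge (p\tilde\theta_p)^3$. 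This is how the correlation is dissolved---not by any independence of boundary edges, and not by a second-moment argument on ``boundary edges seeing infinite rays'', which would still face exactly the conditioning problem you flagged.

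With this input the constants are $c_0=\tfrac{p(p\tilde\theta_p)^3}{2d}$, $c_1=\tfrac{p(p\tilde\theta_p)^3}{2}$, and \eqref{eqn::super_lowerbound_G} gives \eqref{eq:lowerbound_tree}; then $\tilde\theta_p\asymp (p-p_c)$ gives \eqref{eq:lowerv9}.
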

	Recall that in the context of bond percolation $\eta$, a vertex $x\in V$  is called a \textbf{trifurcation point} of $\eta$ if closing all edges incident to $x$ would split the component of $x$ in $\eta$ into at least 3 disjoint infinite connected components; see Figure \ref{fig:tri} for an illustration of trifurcation points on $\mathbb T_3$. 
	
	\begin{figure}
		\begin{center}
			\includegraphics{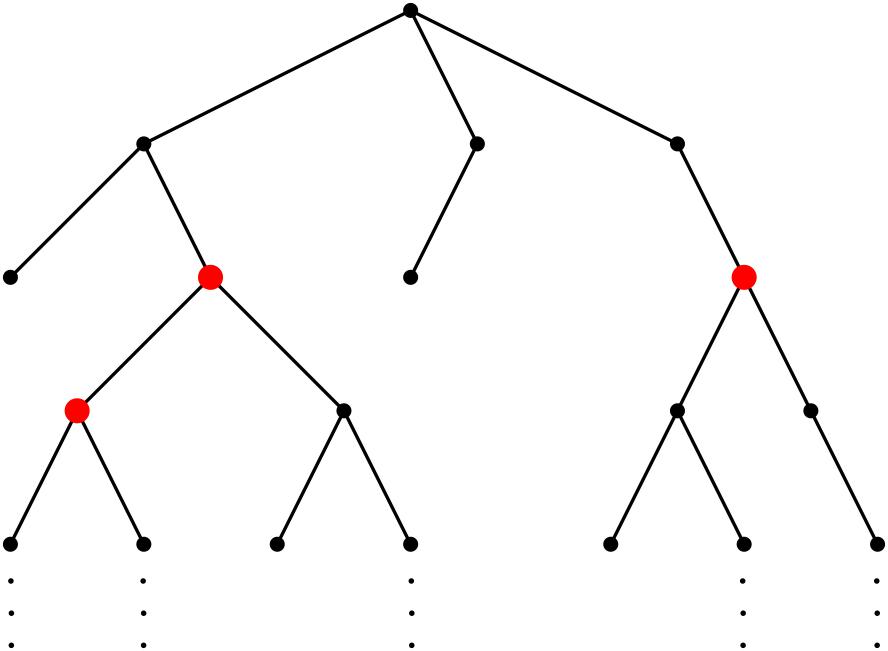}
			\caption{Dots stand for an infinite open path emanating from the corresponding vertex; three trifurcation points are marked in red.}\label{fig:tri}
		\end{center}
	\end{figure}

	Assuming that $\eta_{0}$ is distributed according to $\pi_{p}$, by stationarity, $\eta_t$ also follows the distribution $\pi_{p}$ for any $t\ge 0$. Let $S \subset V\left(\mathbb{T}_{d}\right)$ be an arbitrary finite subset. For each $x \in S$, by setting $3$ edges incident to $x$ to be open, we have
	\begin{equation}\label{eq:trif_estimate}
		\widehat{\mathbb{P}}(x \text{ is a trifurcation point of } \eta_t)\geq (p\tilde{\theta}_p)^3.  
	\end{equation}
	This implies that
	\begin{equation}\label{eq:numberoftri}
		\widehat{\mathbb{E}}[\text{number of trifurcation points of }  \eta_t \text{ in } S]\geq |S|(p\tilde{\theta}_p)^3.
	\end{equation}
	For any $S \subset V$, Burton-Keane~\cite{BK1989} proved (which can be verified by induction on $|S|$) that
	\begin{equation}\label{eq:triine}
		|\partial_{\eta_t} S|\geq (\text{number of trifurcation points of $\eta_t$ in }S)+2.
	\end{equation}
	Combining \eqref{eq:numberoftri} and \eqref{eq:triine}, we get that
	\begin{equation}\label{eq:ebSb}
		\widehat{\mathbb{E}}\left[|\partial_{\eta_t} S|\right]\geq  |S|(p\tilde{\theta}_p)^3,\quad  \forall t\geq 0.
	\end{equation}
	With this estimate, we can derive the following lemma. 
	\begin{lemma}\label{lem:Phi}
		Suppose $\eta_{0}$ follows the stationary distribution $\pi_{p}$. For any fixed finite nonempty subset $S \subset V\left(\mathbb{T}_{d}\right)$ and $n \in \mathbb{N} \cup\{0\}$, we have
		\begin{equation}\label{eq:PSn}
			\widehat{\mathbb{P}}\left( \frac{1}{|S|}\int_{n}^{n+1}\left|\partial_{\eta_{t}} S\right| \dd t  \geq \frac{1}{2}(p \tilde{\theta}_p)^{3}\right) \geq \frac{(p \tilde{\theta}_p)^{3}}{2d}.
		\end{equation}
		A similar estimate also holds when $S=S_n$ (which is random and depends on the environment $\eta_n$):
		\begin{equation}\label{eq:PSnSn}
			\widehat{\mathbb{P}}\left( \frac{1}{|S_n|}\int_{n}^{n+1}\left|\partial_{\eta_{t}} S_n\right| \dd t  \geq \frac{p}{2}(p \tilde{\theta}_p)^{3}\right) \geq \frac{p}{2d}(p \tilde{\theta}_p)^{3}.
		\end{equation}
	\end{lemma}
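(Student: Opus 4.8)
The plan is to prove \eqref{eq:PSn} by a short second-moment argument and then to bootstrap it to the random set $S_n$ via the Diaconis--Fill identity \eqref{eq:DF_key}, the Burton--Keane/trifurcation estimate, and the stationarity of the environment seen from the particle.

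For \eqref{eq:PSn}, fix a finite nonempty $S\subset V(\mathbb{T}_d)$ and set $Y:=\frac{1}{|S|}\int_{n}^{n+1}|\partial_{\eta_t}S|\,\dd t$. Since $\eta_t\sim\pi_p$ for every $t$ by stationarity, the estimate \eqref{eq:ebSb} holds for all $t\ge 0$, so Fubini's theorem gives $\widehat{\mathbb{E}}[Y]\ge (p\tilde{\theta}_p)^{3}$; on the other hand $|\partial_{\eta_t}S|\le |\partial_{\mathbb{T}_d}S|\le d|S|$ deterministically, so $Y\le d$ pointwise. A reverse-Markov (Paley--Zygmund-type) inequality, $\widehat{\mathbb{E}}[Y]\le \tfrac12(p\tilde{\theta}_p)^{3}+d\,\widehat{\mathbb{P}}(Y\ge \tfrac12(p\tilde{\theta}_p)^{3})$, then yields $\widehat{\mathbb{P}}(Y\ge \tfrac12(p\tilde{\theta}_p)^{3})\ge \frac{(p\tilde{\theta}_p)^{3}}{2d}$, which is \eqref{eq:PSn}.

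For \eqref{eq:PSnSn} the difficulty is that $S_n$ is built from the environment on $[0,n]$ and hence is correlated with $(\eta_t)_{t\in[n,n+1]}$, so \eqref{eq:ebSb} cannot simply be invoked with $S=S_n$; this correlation is the main obstacle. I would handle it in two steps. First, condition on the entire history (environment and evolving-set randomness) up to time $n$, which determines $S_n$ and $\eta_n$; for every edge $e$ and every $t\in[n,n+1]$ one has $\widehat{\mathbb{P}}(\eta_t(e)=1\mid \text{history})=e^{-\mu(t-n)}\mathbf 1(\eta_n(e)=1)+(1-e^{-\mu(t-n)})p\ge p\,\mathbf 1(\eta_n(e)=1)$, hence $\widehat{\mathbb{E}}\big[\int_{n}^{n+1}|\partial_{\eta_t}S_n|\,\dd t\,\big|\,\text{history}\big]\ge p\,|\partial_{\eta_n}S_n|$ — this is where the extra factor $p$ enters. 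Second, bound $\widehat{\mathbb{E}}[|\partial_{\eta_n}S_n|/|S_n|]$: by \eqref{eq:triine} we have $|\partial_{\eta_n}S_n|\ge \sum_{x\in S_n}\mathbf 1(x\text{ is a trifurcation point of }\eta_n)$, and by \eqref{eq:DF_key} the walker $X_n$ is uniform on $S_n$ given the environment and the evolving sets, so $\widehat{\mathbb{E}}\big[\tfrac1{|S_n|}\sum_{x\in S_n}\mathbf 1(x\text{ trif. of }\eta_n)\big]=\widehat{\mathbb{P}}(X_n\text{ is a trifurcation point of }\eta_n)$; the latter equals the probability that the root is a trifurcation point of the environment seen from the particle at time $n$, which is $\pi_p$-distributed by the stationarity of Lemma~\ref{lem.ViewStationary} (applied to the one-sided process started from $\pi_p$), and is therefore $\ge(p\tilde{\theta}_p)^{3}$ by the three-open-edges argument of \eqref{eq:trif_estimate}. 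Combining the two steps gives $\widehat{\mathbb{E}}\big[\tfrac1{|S_n|}\int_{n}^{n+1}|\partial_{\eta_t}S_n|\,\dd t\big]\ge p(p\tilde{\theta}_p)^{3}$, and since this quantity is again at most $d$ pointwise, the same reverse-Markov inequality produces \eqref{eq:PSnSn}. Along the way I would double-check the harmless measure-theoretic point that $S_n$ is $\sigma((\eta_s)_{s<n})$-measurable (so conditioning "up to time $n$" is unambiguous and the future environment is a fresh dynamical percolation started from $\eta_n$), and that indeed $|\partial_{\mathbb{T}_d}S_n|\le d|S_n|$ so that $Y\le d$.
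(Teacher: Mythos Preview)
Your proposal is correct and follows essentially the same route as the paper: the same reverse-Markov bound for \eqref{eq:PSn}, and for \eqref{eq:PSnSn} the same two-step reduction --- first to $\widehat{\mathbb E}[|\partial_{\eta_n}S_n|/|S_n|]$ via the edgewise lower bound $\widehat{\mathbb P}(\eta_t(e)=1\mid\text{history})\ge p\,\mathbf 1(\eta_n(e)=1)$, then to $\widehat{\mathbb P}(X_n\text{ is a trifurcation point of }\eta_n)$ via \eqref{eq:triine}, \eqref{eq:DF_key}, and Lemma~\ref{lem.ViewStationary}. One harmless slip in your closing parenthetical: $S_n$ is not $\sigma((\eta_s)_{s<n})$-measurable alone --- it also depends on the evolving-set uniforms --- but you already condition on those correctly in the main argument, so nothing is affected.
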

	\begin{proof}
		First, we trivially have $\left|\partial_{\eta_{t}} S\right| \leq d|S|$, which implies the rough bound
		\[Z_n :=\frac{1}{|S|}\int_{n}^{n+1}\left|\partial_{\eta_{t}} S\right| \d t \le d.\]
		On the other hand, by Fubini's theorem and \eqref{eq:ebSb}, we have
		\begin{equation*}
			\widehat{\mathbb{E}}[Z_n]=\frac{1}{|S|}\int_{n}^{n+1} \widehat{\mathbb{E}}\left[\left|\partial_{\eta_{t}} S\right|\right] \d t \geq (p \tilde{\theta}_p)^{3}.
		\end{equation*}
		Thus, we get that
		\begin{align}
			(p \tilde{\theta}_p)^{3} \leq & \widehat{\mathbb{E}}[Z_n] 
			= \widehat{\mathbb{E}}\left[Z_n \mathbf{1}\left(Z_n \geq \frac{1}{2}{(p \tilde{\theta}_p)^{3}}\right)\right] +\widehat{\mathbb{E}}\left[Z_n \mathbf{1}\left(Z_n<\frac12{(p \tilde{\theta}_p)^{3}} \right)\right] \nonumber\\
			\leq & d  \widehat{\mathbb{P}}\left(Z_n \geq \frac12{(p \tilde{\theta}_p)^{3}}\right)+\frac12 (p \tilde{\theta}_p)^{3},\label{eq:trivialYn}
		\end{align}
		which gives the desired inequality \eqref{eq:PSn}.
		
		To show the estimate for $S_n$, we will establish the lower bound
		\begin{equation}
			\label{eq:ESn} \widehat{\mathbb{E}}\left[  \frac{|\partial_{\eta_t} S_n|}{|S_n|} \right]\geq  p(p\tilde{\theta}_p)^3 ,\quad  \forall t\in [n,n+1].
		\end{equation}
		Then, using a similar argument as above, we can conclude \eqref{eq:PSnSn}. Since for $t\in [n,n+1]$, 
		$$\widehat{\mathbb{E}}\left[ \left. |\partial_{\eta_t} S_n|\right| S_n \right]\ge [e^{-\mu(t-n)} + p(1-e^{-\mu(t-n)})]\widehat{\mathbb{E}}\left[ \left. |\partial_{\eta_n} S_n|\right| S_n \right]\ge p \widehat{\mathbb{E}}\left[ \left. |\partial_{\eta_n} S_n|\right| S_n \right],$$ 
		it suffices to prove that 
		\begin{equation}
			\label{eq:ESnSn} \widehat{\mathbb{E}}\left[ \frac{ |\partial_{\eta_n} S_n|}{|S_n|} \right]\geq (p\tilde{\theta}_p)^3 .
		\end{equation}
		By \eqref{eq:triine}, it follows from the following estimate on the proportion of trifurcation points:
		\begin{equation}\label{eq:numberoftri2}	\widehat{\mathbb{E}}\left[\frac{1}{|S_n|}\sum_{x\in S_n} \mathbf 1(x \text{ is a trifurcation point of }\eta_n) \right]\geq (p\tilde{\theta}_p)^3.
		\end{equation}
		Using \eqref{eq:DF_key}, we obtain that 
		\begin{align*}
			&~\frac{1}{|S_n|}\widehat{\mathbb E} ^{\bm{\eta}}\left[ \sum_{x\in S_n} \mathbf 1{(x \text{ is a trifurcation point of }\eta_n)}\Big| S_n\right] \\
			= &~ \widehat{\mathbb P} ^{\bm{\eta}}\left( \left. X_n \text{ is a trifurcation point of }\eta_n\right| S_n\right). 
		\end{align*}
		Taking the expectation of both sides, we see that to prove \eqref{eq:numberoftri2}, it suffices to show that 
		\begin{equation}\label{eq:numberoftri3}
			\widehat{\mathbb P}\left(X_n \text{ is a trifurcation point of }\eta_n\right) \geq (p\tilde{\theta}_p)^3.
		\end{equation}
		By \Cref{lem.ViewStationary}, the environment seen by the moving particle is stationary when $\eta_0$ has distribution $\pi_p$. Thus, we have that 
		\begin{equation}\label{eq:numberoftri4}
			\widehat{\mathbb P}\left(X_n \text{ is a trifurcation point of }\eta_n \right)= \widehat{\mathbb P}\left( X_0 \text{ is a trifurcation point of }\eta_0\right) \geq (p\tilde{\theta}_p)^3.
		\end{equation}
		This implies \eqref{eq:numberoftri3} and concludes the proof of \eqref{eq:PSnSn}.
	\end{proof}

	\begin{proof}[Proof of Theorem~\ref{thm::supercritical_quantitative}]
		By Lemma~\ref{lem:Phi}, we can take 
		\[c_0=\frac{p(p \tilde{\theta}_p)^{3}}{2d}, \quad c_1=\frac{p(p \tilde{\theta}_p)^{3}}{2},\]
		in \eqref{eq:PSn_G}. Then, applying \eqref{eqn::super_lowerbound_G} concludes  \eqref{eq:lowerbound_tree}. 
		It is well-known that (see e.g., equation (2.1.17) of \cite{HeydenreichvanderHofstadsurvey} by Heydenreich and van der Hofstad)
		\begin{equation}\label{eq:wtThetap}
			\tilde{\theta}_p\sim \frac{2b^2}{b-1}(p-p_c) \ \ \text{ as }\ \ p\downarrow p_c.
		\end{equation}
		Combined with~\eqref{eq:lowerbound_tree}, it implies~\eqref{eq:lowerv9}. 
	\end{proof}
	\subsection{The general case}\label{subsec_general}
	
	In this subsection, we complete the proof of Lemma~\ref{lem:Phi2} for general nonamenable transitive unimodular graphs. 
	We still use the evolving set process and the Diaconis-Fill coupling defined for the general graph $G$. 
	The proof of \Cref{lem:Phi2} depends on the following lemma established by Benjamini, Lyons, and Schramm \cite{BLS1997}. Hermon and Hutchcroft \cite{HermonHutchcroft2021} also utilized this lemma (stated as Lemma 2.5 there) to prove the anchored expansion of percolation clusters. 
	
	\begin{lemma}\label{lem:trifucation_general}
		Let $G$ be a connected, locally finite, nonamenable transitive unimodular graph. We select an arbitrary root $o$ in $G$. For any $p \in (p_c,1]$, there exists an automorphism-invariant percolation process $\zeta$ on $G$ and a coupling between $\zeta$ and the Bernoulli-$p$ bond percolation $\eta$ (with law $\pi_p$) on $G$, such that the following holds:
		\begin{itemize}
			\item[(i)] There exists a constant $c_p\equiv c_p(G)>0$ such that the root $o$ is a trifurcation point of $\zeta$ with probability at least $c_p$.
			\item[(ii)] The process $\zeta$ is dominated by $\eta$, i.e., $\zeta\le \eta$.
			\item[(iii)] The coupling $(\eta,\zeta)$ between $\eta$ and $\zeta$ is automorphism-invariant.
			Specifically, $\zeta$ can be obtained as $\zeta=F(\eta,\xi)$, where $\xi$ is a collection of i.i.d.~random variables at the vertices and edges (independent of $\eta$), and $F$ is an equivariant function under automorphisms of $G$. 
		\end{itemize}
	\end{lemma}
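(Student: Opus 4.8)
This is essentially the lemma of Benjamini, Lyons, and Schramm \cite{BLS1997} (the ingredient used by Hermon and Hutchcroft as Lemma~2.5 of \cite{HermonHutchcroft2021}), so the plan is to recall their construction and then read off the three properties. First I would reduce the statement to the following goal: construct an automorphism-invariant percolation $\zeta$ on $G$ which is a \emph{forest}, which is of the form $\zeta=F(\eta,\xi)$ for an equivariant function $F$ and an independent i.i.d.\ family $\xi=(\xi_x,\xi_e)_{x,e}$ of $[0,1]$-labels on the vertices and edges, which satisfies $\zeta\le\eta$, and for which $\mathbf{P}(o\leftrightarrow\infty\text{ in }\zeta)>0$ while a.s.\ every infinite component of $\zeta$ has infinitely many ends. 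Granting this, property (ii) and property (iii) are built into the construction, and property (i) will be deduced in the last step.

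For the construction I would work inside the infinite cluster of $\eta$. Since $p>p_c(G)$, the definition of $p_c$ together with transitivity gives $\mathbf{P}(o\leftrightarrow\infty\text{ in }\eta)>0$ and an a.s.\ infinite cluster $\mathcal{C}$. Using the labels $\xi$ one then grows, equivariantly, a spanning subforest of $\mathcal{C}$: starting from an invariant positive-density subset of $\mathcal{C}$ one repeatedly extends and \emph{branches} the partial forest, using the isoperimetric inequality $\Phi(G)>0$ to ensure that at each stage a positive proportion of the current boundary vertices have enough outgoing $\eta$-open edges to keep at least three disjoint branches growing to infinity; the resulting invariant forest $\zeta\le\eta$ has infinite components with infinitely many ends. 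This branching argument is the heart of \cite{BLS1997}. All choices made along the way are resolved by the i.i.d.\ labels $\xi$, which is precisely why $F$ is equivariant (property (iii)), and no edge outside $\eta$ is ever used (property (ii)).

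It remains to deduce the quantitative bound (i). Any tree with infinitely many ends contains a vertex whose removal leaves at least three infinite components --- i.e.\ a trifurcation point --- and in fact it contains infinitely many of them. Suppose, for contradiction, that $\mathbf{P}(o\text{ is a trifurcation point of }\zeta)=0$; then by transitivity $\mathbf{E}\big[\#\{x\in B(o,r):x\text{ is a trifurcation point of }\zeta\}\big]=|B(o,r)|\,\mathbf{P}(o\text{ is a trifurcation point of }\zeta)=0$ for every $r$, so a.s.\ $\zeta$ has no trifurcation point at all; but on the positive-probability event $\{o\leftrightarrow\infty\text{ in }\zeta\}$ the component of $o$ is an infinitely-ended tree and therefore does contain trifurcation points, a contradiction. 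Hence one may take $c_p:=\mathbf{P}(o\text{ is a trifurcation point of }\zeta)>0$, which depends only on $G$ and $p$. The step I expect to be the main obstacle is the construction of the second paragraph: one cannot assume at the outset that the cluster $\mathcal{C}$ is itself nonamenable --- that is a downstream consequence of this very lemma via \cite{HermonHutchcroft2021} --- so the branching must be powered by the isoperimetry of the ambient graph $G$ together with the good connectivity of $\mathcal{C}$, and making this simultaneously quantitative and automorphism-invariant is the delicate point that \cite{BLS1997} supply.
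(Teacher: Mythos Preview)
Your identification of the source (Benjamini--Lyons--Schramm \cite{BLS1997}, via Lemma~2.5 of \cite{HermonHutchcroft2021}) and your deduction of (i) from an infinitely-ended invariant forest are both correct. Where you diverge from the paper is in the construction itself. The paper does \emph{not} build a forest in general; it splits into two cases according to whether $p_c<p_u$ or $p_c=p_u$. In the first (conjecturally generic) case the construction is elementary: one simply thins $\eta$ independently to obtain $\zeta=\xi\cdot\eta$ with $\zeta\sim\pi_{p_*}$ for some $p_*\in(p_c,\,p_u\wedge p)$. Then $\zeta$ is Bernoulli percolation in the nonuniqueness phase, has infinitely many infinite clusters, and the Burton--Keane insertion argument gives trifurcation points with positive density. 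No forest, no branching, no isoperimetry is needed. Only in the case $p_c=p_u$ does the paper invoke the genuine BLS machinery, and there the construction proceeds via minimal spanning forests and wired uniform spanning forests (built by Wilson's algorithm), not via the ``grow-and-branch using $\Phi(G)$'' scheme you describe.

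This matters because your second paragraph is not a summary of what \cite{BLS1997} actually do, and as written it is not a proof either: you propose to repeatedly branch the partial forest using the isoperimetry of the \emph{ambient} graph $G$, but the obstacle you yourself flag---that one cannot assume the cluster $\mathcal C$ inherits nonamenability---is precisely why a direct branching argument is hard to make invariant. BLS sidestep this entirely by using spanning-forest constructions that are automatically equivariant. So while the overall plan (cite BLS, check (ii)--(iii) are built in, derive (i) from many ends) is sound, you should replace the heuristic branching sketch with the actual two-case construction the paper records.
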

	\begin{proof}
		Under the assumption of unimodularity, this lemma is essentially contained in Lemma 3.8 and Theorem 3.10 of \cite{BLS1997}. 
		(It also holds for nonunimodular graphs as explained in Lemma 2.5 of \cite{HermonHutchcroft2021}.) We now briefly explain how the equivariant function $F$ is constructed.
		Let $$p_u=p_u(G):=\inf\{p\in[0,1]:\eta \text{ has a unique infinite cluster } \P_p\text{-a.s.}\}\,.$$
		If $p_c<p_u$, we can choose $p_*\in (p_c, p_u\wedge p)$ and let $\xi$ be a Bernoulli-$(p_*/p)$ percolation on $G$, independent of $\eta$. Then, we can define the function $F$ as   $$\zeta(e)=F(\xi,\eta)(e):=\xi(e)\eta(e).$$
		In the more challenging case $p_c=p_u$, the construction of $F$ was explained in the proof of Lemma 3.8 and Theorem 3.10 in \cite{BLS1997}. It involves Bernoulli percolation (independent of $\eta$), minimal spanning forest, and wired uniform spanning forest. (To construct the wired uniform spanning forest, one can employ the classical Wilson's method, initially developed by Wilson \cite{Wilson_Tree} and subsequently extended by Benjamini, Lyons, Peres, and Schramm \cite{BLPS:2001_AOP} to infinite graphs.) 
		The proof in \cite[Theorem 3.10]{BLS1997} is based on the construction of a subprocess $\eta'\subset \eta$ in \cite[Theorem 3.1]{BLS1997}. 
		There is one minor point in the construction of $\eta'$ that we want to clarify: when a shortest path between two points must be selected in an equivariant way, we assign to the edges an independent collection of i.i.d.~uniform $[0,1]$ labels and choose the path with the minimal sum. This is slightly different from the construction in the proof of \cite[Theorem 3.1]{BLS1997}.
	\end{proof}

	\begin{proof}[Proof of \Cref{lem:Phi2}]
		We prove that \eqref{eq:PSn_G} holds for 
		$$ c_0=\frac{pc_p}{2d},\quad c_1=\frac{pc_p}{2},$$
		where $c_p$ is the constant in \Cref{lem:trifucation_general} (i). 
		Similar to the proof of \eqref{eq:PSnSn}, we only need to establish the following estimate on $|\partial_{\eta_n} S_n|/|S_n|$, which is analogous to \eqref{eq:ESnSn}:
		\begin{equation} \nonumber
			\widehat{\mathbb{E}}\left[  {|\partial_{\eta_n} S_n|}/{|S_n|} \right]\geq c_p. 
		\end{equation}
		Using the function $F$ from \Cref{lem:trifucation_general}, we define $\zeta_t=F(\eta_t,\xi)$ for $t\ge 0$. By \Cref{lem:trifucation_general} (ii), it suffices to prove that 
		\begin{equation}\label{eq:ESnSn_G}
			\widehat{\mathbb{E}}\left[  {|\partial_{\zeta_n} S_n|}/{|S_n|} \right]\geq c_p. 
		\end{equation}
		Following the argument presented below \eqref{eq:ESnSn}, we can derive that 
		\begin{align*}
			\widehat{\mathbb{E}}\left[  \frac{|\partial_{\zeta_n} S_n|}{|S_n|} \right] \ge \widehat{\mathbb P}\left(X_n \text{ is a trifurcation point of }\zeta_n\right).
		\end{align*}
		By \Cref{lem.ViewStationary}, the pair $(\eta_t,\zeta_t)=(\eta_t, F(\eta_t,\xi))$ seen by the moving particle at $X_t$ is stationary. Hence, we have  
		$$\widehat{\mathbb P}\left(X_n \text{ is a trifurcation point of }\zeta_n\right)=\widehat{\mathbb P}\left(X_0 \text{ is a trifurcation point of }\zeta_0\right)\ge c_p$$
		according to \Cref{lem:trifucation_general} (i). This concludes \eqref{eq:ESnSn_G}, and hence completes the proof of \Cref{lem:Phi2}. 
	\end{proof}

	\section{Speed for the subcritical case}\label{sec_sub}
	
	
	In this section, we provide the proofs of  \Cref{thm:Subcritical} and \Cref{prop:general_Lower}. Note that when $p<p_c$, \Cref{prop:general_Lower} already gives the desired lower bound in \eqref{eq:Subcritical}, so we focus on proving the upper bound in \Cref{thm:Subcritical} and \Cref{prop:general_Lower}.
	
	\subsection{Proof of \Cref{thm:Subcritical}}
	

	The upper bound in \eqref{eq:Subcritical} is trivially satisfied when $\mu \ge 1/2$. Hence, in the following proof, we will assume that $\mu\le 1/2$. It suffices to establish the following result.	
	
	\begin{theorem}\label{thm:Subcritical_Upper}
		For any $p\in (0,p_c)$ and $\mu \in (0,1/2]$, there exists a constant $C_{\ref{thm:Subcritical_Upper}}>0$ independent of $\mu$ such that the following estimate holds for all $t\ge 0$:
		\begin{align}
			\e[\mathrm{dist}(X_0, X_{t/\mu})] \leq C_{\ref{thm:Subcritical_Upper}}   (t \vee 1).
		\end{align}
	\end{theorem}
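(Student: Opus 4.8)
The plan is to show that in the subcritical phase, the random walker is confined, with overwhelming probability, to the open cluster of its starting point, and that this cluster only changes substantially on the time scale $1/\mu$. The displacement is then controlled by (a) the intrinsic radius of a subcritical cluster, which has exponential tails uniformly in the parameters involved, and (b) the number of ``cluster-refresh'' events that occur by time $t/\mu$, which is of order $t$.

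\textbf{Step 1: Localizing the walk in a slowly-varying cluster.} Following the idea used in the proof of \Cref{t:ub}, I would fix a short time window of length $s/\mu$ for a small constant $s$ (say $s$ a fixed fraction of $1$, independent of $\mu$), and let $\tclt_o$ denote the subgraph of $G$ consisting of all edges that are open at some time during $[0,s/\mu]$. Under the stationary measure $\p$, this is a Bernoulli-$p'$ percolation with
\[
p' = p + (1-p)\bigl(1-e^{-\mu \cdot (s/\mu) \cdot p}\bigr) = p + (1-p)(1-e^{-sp}).
\]
Since $p<p_c$ and $s$ can be chosen so small (depending only on $p$ and $p_c$, not on $\mu$) that $p'<p_c$ as well, the cluster $\tclt_o$ of $o$ in this percolation is subcritical, so by a standard exponential-decay estimate for subcritical percolation on bounded-degree graphs there are constants $c,C>0$ (independent of $\mu$) with $\p(\Radi(\tclt_o)\ge r)\le Ce^{-cr}$ for all $r\ge 0$. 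During the whole window $[0,s/\mu]$ the walker can only cross edges that are open at the moment of crossing, hence only edges of $\tclt_o$, so $\dist(X_0, X_u)\le \Radi(\tclt_o)$ for all $u\in[0,s/\mu]$. Taking expectations, $\e[\dist(X_0, X_{s/\mu})]\le \e[\Radi(\tclt_o)]\le C'$ for a constant $C'$ independent of $\mu$.

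\textbf{Step 2: Concatenation over windows.} To reach time $t/\mu$, I would split $[0,t/\mu]$ into $\lceil t/s\rceil$ windows each of length $s/\mu$ (the last one possibly shorter). By the stationarity of $\p$ under the environment-seen-from-the-particle (or, more elementarily, by stationarity of $\pi_p$ and the triangle inequality applied along the segments, exactly as in the final step of the proof of \Cref{t:ub}), we have
\[
\e[\dist(X_0, X_{t/\mu})] \le \sum_{j=0}^{\lceil t/s\rceil - 1} \e\bigl[\dist(X_{js/\mu}, X_{(j+1)s/\mu})\bigr] \le \lceil t/s\rceil \cdot C' \le C_{\ref{thm:Subcritical_Upper}}(t\vee 1),
\]
with $C_{\ref{thm:Subcritical_Upper}} = 2C'/s$, say. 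Finally, since \Cref{thm:Subcritical_Upper} is stated under $\p$ (the annealed measure with $\eta_0\sim\pi_p$), \Cref{prop:stationary_arbitrary} is not literally needed here because the conclusion is about an $L^1$ quantity rather than an almost-sure event; but the bound $\e[\dist(X_0,X_t)]\le \e[J[0,t]] = t$ already gives that the left side is finite for every initial configuration, and the displacement bound along reset times together with \Cref{lem.weakRege} could alternatively be invoked. I would present the argument directly under $\p$ and then note that the speed bound $v_p(\mu)\le C_{\ref{thm:Subcritical}}\mu$ follows by dividing \eqref{thm:Subcritical_Upper}'s conclusion by $t/\mu$ and letting $t\to\infty$, using \Cref{lem:existence_speed}.

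\textbf{Main obstacle.} The only genuinely nontrivial input is the uniform (in $\mu$) exponential tail $\p(\Radi(\tclt_o)\ge r)\le Ce^{-cr}$ for the intrinsic radius of a subcritical cluster. This is where I would need to be careful: I should invoke a version of subcritical exponential decay that controls the \emph{intrinsic} (chemical) radius rather than just the cluster size, valid on general bounded-degree transitive graphs at any fixed $p''<p_c$. On bounded-degree graphs a crude bound suffices — the number of self-avoiding paths of length $r$ from $o$ is at most $d(d-1)^{r-1}$, and each is open with probability $p''^{\,r}$, so $\p(\Radi\ge r)\le d(d-1)^{r-1}p''^{\,r}$, which decays exponentially \emph{provided} $p''(d-1)<1$. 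That sharp-threshold issue means the naive union bound only works for $p$ quite small, so in general I would instead cite the exponential decay of the intrinsic one-arm probability in the whole subcritical regime (e.g.\ via the results of Duminil-Copin--Tassion / Hermon--Hutchcroft, or the Aizenman--Barsky/Menshikov-type statement adapted to chemical distance), noting that the relevant constants depend only on $G$ and on the gap $p_c - p''$, hence are uniform in $\mu$ once $s$ is fixed. Everything else is a routine concatenation.
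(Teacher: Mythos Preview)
Your approach is essentially the same as the paper's: split $[0,t/\mu]$ into windows of length $\beta/\mu$, note that within each window the walker is trapped in the cluster of its starting point with respect to the ``ever-open'' percolation at parameter $\tilde p<p_c$ (for $\beta$ small enough), bound the displacement in each window by the radius of that subcritical cluster, and sum using the triangle inequality and stationarity.

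One small simplification: your ``main obstacle'' is self-inflicted. You bound $\dist(X_0,X_u)\le\Radi(\tclt_o)$ and then worry about exponential decay of the \emph{intrinsic} radius throughout the subcritical phase. But $\dist(X_0,X_u)\le\Rade(\tclt_o)$ already holds (the graph distance is at most the extrinsic radius of the cluster), and exponential decay of $\P_{p''}(\Rade(\clt_o)\ge r)$ for all $p''<p_c$ on arbitrary transitive graphs is exactly the content of the Duminil-Copin--Tassion / Menshikov / Aizenman--Barsky sharpness theorems. The paper does precisely this, citing Proposition~\ref{thm:exp.volume}. So the union bound over self-avoiding paths and the delicate chemical-distance version you mention are both unnecessary.
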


	This theorem is a consequence of the following classical exponential tail estimate \eqref{eq:expdecay} concerning the diameter of connected components in subcritical percolation. This estimate follows directly from \cite[Theorem 1.1]{Duminil-Copin:2016CMP} by Duminil-Copin and Tassion for subcritical percolation on arbitrary locally finite transitive infinite graphs. In fact, it already follows from the arguments in the breakthrough works of Menshikov \cite{Menshikov1986} and Aizenman and Barsky \cite{AizenmanBarsky:1987CMP}.

	\begin{proposition}\label{thm:exp.volume}
		For any $p\in (0,p_\critical)$, there exists a constant $C_{\ref{thm:exp.volume}}=C_{\ref{thm:exp.volume}}(p)>0$ such that for any vertex $o\in V$ and all $r>0$, the following bound holds:
		\begin{equation}\label{eq:expdecay}
			\P_p( \Rade(\mathcal C_o)\ge r)\le e^{-C_{\ref{thm:exp.volume}}r}.			    
		\end{equation}
		Here, recall that $\mathcal C_o$ is the connected component containing $o$, and $\Rade(\cdot)$ refers to its extrinsic radius as defined in \eqref{eq.defRadius}.   
	\end{proposition}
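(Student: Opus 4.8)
The plan is to recognise $\{\Rade(\mathcal C_o)\ge r\}$ as a one-arm connection event and then to invoke the sharpness of the subcritical phase transition, which is exactly the content cited from \cite{Duminil-Copin:2016CMP} (or, classically, from \cite{Menshikov1986} and \cite{AizenmanBarsky:1987CMP}).

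First I would unwind the definition of the extrinsic radius. Writing $\partial B(o,r):=\{v\in V:\dist(o,v)=r\}$, I claim that for every integer $r\ge 1$ there is an identity of events
\begin{equation*}
\{\Rade(\mathcal C_o)\ge r\}=\{o\leftrightarrow \partial B(o,r)\}.
\end{equation*}
The inclusion ``$\supseteq$'' is immediate. For ``$\subseteq$'', if $u\in\mathcal C_o$ with $\dist(o,u)\ge r$, pick an open path from $o$ to $u$; consecutive vertices of a path are adjacent, so the function ``distance to $o$'' changes by at most $1$ along the path, and, starting from $0$ and ending at a value $\ge r$, it takes the value $r$ at some vertex of the path, which is therefore in $\partial B(o,r)$ and open-connected to $o$. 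Local finiteness of $G$ guarantees that $B(o,r)$ is finite, so this is a genuine finite-range event, and transitivity guarantees that the constants produced below do not depend on the choice of $o$.

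It then remains to quote the sharpness of the phase transition: by \cite[Theorem~1.1]{Duminil-Copin:2016CMP}, valid on every locally finite transitive graph, for each $p<p_c$ there is $c=c(G,p)>0$ such that $\P_p\bigl(o\leftrightarrow\partial B(o,n)\bigr)\le e^{-cn}$ for all $n\in\N$ (this estimate is already available from the arguments of \cite{Menshikov1986} and \cite{AizenmanBarsky:1987CMP}). Setting $C_{\ref{thm:exp.volume}}:=c$ and combining with the event identity of the previous paragraph yields, for all real $r>0$,
\begin{equation*}
\P_p\bigl(\Rade(\mathcal C_o)\ge r\bigr)=\P_p\bigl(o\leftrightarrow\partial B(o,\lceil r\rceil)\bigr)\le e^{-c\lceil r\rceil}\le e^{-cr},
\end{equation*}
which is \eqref{eq:expdecay}. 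The genuine mathematical content is entirely contained in the cited sharpness theorem; the only thing that actually needs to be checked in the present write-up is the elementary event-identification of the second paragraph, so there is no real obstacle beyond being careful about the (trivial) passage from integer to real radii.
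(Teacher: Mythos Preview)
Your proposal is correct and matches the paper's approach: the paper does not give an independent proof but simply states that \eqref{eq:expdecay} ``follows directly from \cite[Theorem 1.1]{Duminil-Copin:2016CMP}'' (with the classical references \cite{Menshikov1986,AizenmanBarsky:1987CMP} also cited), which is exactly the sharpness theorem you invoke. The only thing you add is the explicit event identification $\{\Rade(\mathcal C_o)\ge r\}=\{o\leftrightarrow\partial B(o,\lceil r\rceil)\}$, which the paper leaves implicit.
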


	\begin{proof}[Proof of Theorem~\ref{thm:Subcritical_Upper}]
		We divide the time interval $[0, t/\mu]$ into smaller time intervals of length $\beta / \mu$, where $\beta>0$ is a constant that will be chosen later. Applying the triangle inequality, we get that 	\begin{align}\label{eq.subUpper}
			\dist(X_0, X_{t/\mu}) \leq \sum_{k=0}^{\lfloor t/\beta \rfloor - 1 } \dist(X_{k \beta/ \mu}, X_{(k+1)\beta/\mu}) + \dist(X_{\lfloor t/\beta \rfloor \cdot \beta/\mu}, X_{t/\mu}). 
		\end{align} 
		For $0\le k \le \lfloor t/\beta \rfloor$, let $\bar \eta_k$ be the set of edges that are open at some point  during $[k \beta/ \mu,  (k+1)\beta/\mu]$. Let $\clt_{k} \subset V$ denote the open cluster of $X_{k \beta/ \mu}$ with respect to the bond configuration $\bar \eta_k$.   
		Then, the particle must stay within $\clt_{k}$ during $[k \beta/ \mu, (k+1)\beta/\mu]$, giving that  	\begin{align}\label{eq.subRange}
			\dist(X_{k \beta/ \mu}, X_{(k+1)\beta/\mu}) \leq 2\Rade(\clt_k).
		\end{align}
		By combining \eqref{eq.subUpper} and \eqref{eq.subRange}, we conclude that
		\begin{align}\label{eq.subExpt}
			\e[\dist(X_0, X_{t/\mu})] \leq 2\sum_{k=0}^{\lfloor t/\beta \rfloor} \e[\Rade(\clt_k)].
		\end{align}

		Note that for 
		any edge $e$, we have 
		\begin{align*}
			\tilde p:=\p\left(e \text{ is open some time during } [k \beta/ \mu, (k+1)\beta/\mu]\right) \le 1- (1-p) e^{- \beta}.
		\end{align*}
		Since $p < p_c$, we can choose a small enough constant $\beta>0$ such that $\tilde{p}<p_c$.
		Hence, $\clt_k$ is a connected component in the subcritical percolation of $G$ with parameter $\tilde p$.  
		By \Cref{thm:exp.volume}, $\Rade(\clt_k)$ has exponential tail, which implies that $\e[\Rade(\clt_k)]\le C$ for a large constant $C>0$. Together with \eqref{eq.subExpt}, it concludes the proof.  
	\end{proof}
	
	\subsection{A general lower bound: Proof of \Cref{prop:general_Lower}}

	In this subsection, we prove \Cref{prop:general_Lower}. As a special case, when $p<p_c$, it gives the lower bound in \eqref{eq:Subcritical}. 
	
	\subsubsection{Case $\mu > 1/2$.}
	First, we consider the simpler case where $\mu > 1/2$. Similarly to the setting in \Cref{subsec::DFcoupling}, we utilize the Diaconis-Fill coupling between the random walk and the evolving set process. Following the proof of Theorem~\ref{thm:Supercritical}, our goal is to establish a similar estimate as in Lemma~\ref{lem:Phi2}. 
	Let $\p_{\eta_0}$ denote the probability measure of the environment process $\bm{\eta}$ when the initial bond configuration is ${\eta}_0$. 
	\begin{lemma}\label{lem:largemu}
		Assume the same setup as in \Cref{prop:general_Lower}. For any $p\in (0,1]$, $\mu > 1/2$, initial environment configuration ${\eta}_0$, and nonempty finite subset $S\subset V$, we have that 
		\begin{equation}\label{eq:partialSnn}
			\p_{{\eta}_0}\left(	\frac{1}{|S|}\int_{0}^{1}\left|\partial_{\eta_{t}} S\right| \dd 	t								 \geq \frac{cpd}{2}\Phi(G)\right)\geq \frac{cp}{2},
		\end{equation}
		where $c	=\int_0^1(1-e^{-t/2}) \dd t	>0$.			
	\end{lemma}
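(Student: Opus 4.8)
The plan is to estimate the environment $\bm\eta$ by itself — the random walk plays no role in this lemma — and to exploit the fact that when $\mu>1/2$ a constant fraction of every edge's time in $[0,1]$ is spent "fresh," so nonamenability ($\Phi(G)>0$) feeds directly into a lower bound on the open edge boundary of $S$. Unlike the general supercritical case, no trifurcation points and no Benjamini--Lyons--Schramm coupling are needed here; the Cheeger constant is used in a completely elementary way.

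First I would record the deterministic identity $|\partial_{\eta_t}S|=\sum_{e\in\partial_G S}\mathbf 1(\eta_t(e)=1)$, where $\partial_G S$ is the edge boundary of $S$ in $G$, together with the Cheeger bound $|\partial_G S|\ge \Phi(G)\sum_{v\in S}\deg(v)=\Phi(G)\,d\,|S|$, which uses $d$-regularity. Next, for a fixed edge $e$ and a fixed $t\in[0,1]$, regardless of the deterministic initial configuration $\eta_0$, the probability under $\p_{\eta_0}$ that $e$ is open at time $t$ is at least the probability that $e$ is refreshed at least once during $[0,t]$ and its last refresh in $[0,t]$ opened it, namely $p(1-e^{-\mu t})\ge p(1-e^{-t/2})$ since $\mu>1/2$. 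Taking expectations, applying Fubini, and dividing by $|S|$, and writing $Z:=\tfrac1{|S|}\int_0^1|\partial_{\eta_t}S|\,\dd t$ and $M:=|\partial_G S|/|S|$, this yields
$$\e_{\eta_0}[Z]\ \ge\ M\int_0^1 p(1-e^{-t/2})\,\dd t\ =\ cpM\ \ge\ cpd\,\Phi(G),$$
where $c=\int_0^1(1-e^{-t/2})\,\dd t=2e^{-1/2}-1>0$.

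Finally I would run the standard truncated first moment argument. Trivially $0\le Z\le M$, and the target threshold $a:=\tfrac{cpd}{2}\Phi(G)$ satisfies $a\le\tfrac12 cpM\le\tfrac12\e_{\eta_0}[Z]$ by the display above. Hence
$$\e_{\eta_0}[Z]=\e_{\eta_0}\!\big[Z\mathbf 1(Z\ge a)\big]+\e_{\eta_0}\!\big[Z\mathbf 1(Z<a)\big]\le M\,\p_{\eta_0}(Z\ge a)+a\le M\,\p_{\eta_0}(Z\ge a)+\tfrac12\e_{\eta_0}[Z],$$
so $\p_{\eta_0}(Z\ge a)\ge \e_{\eta_0}[Z]/(2M)\ge cp/2$, which is exactly \eqref{eq:partialSnn}. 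There is essentially no obstacle in this argument; the one point to be careful about is to bound $Z$ above by $M=|\partial_G S|/|S|$ rather than by the crude bound $d$, since using $d$ in the last step would lose a factor of $\Phi(G)$ and only give $\p_{\eta_0}(Z\ge a)\ge cp\,\Phi(G)/2$. Keeping $M$ makes the first moment $\e_{\eta_0}[Z]\ge cpM$ scale with $M$ as well, and the two factors of $M$ cancel.
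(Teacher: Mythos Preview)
Your proof is correct and follows essentially the same approach as the paper: both compute the first moment of $Z=\frac{1}{|S|}\int_0^1|\partial_{\eta_t}S|\,\dd t$ via the edgewise bound $\p_{\eta_0}(\eta_t(e)=1)\ge p(1-e^{-\mu t})\ge p(1-e^{-t/2})$, then run the truncated first moment argument using the upper bound $Z\le |\partial_E S|/|S|$ (your $M$) rather than the crude bound $d$, and finally apply the Cheeger inequality $|\partial_E S|\ge d\Phi(G)|S|$. The only cosmetic difference is that the paper first proves the inequality with threshold $\tfrac{cp}{2}\,|\partial_E S|/|S|$ and then invokes the Cheeger bound, whereas you insert the Cheeger bound into the threshold $a$ from the start while still keeping $M$ as the upper bound for $Z$; the two are equivalent.
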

	\begin{proof}
		Recall that 
		\[\left|\partial_{\eta_t}S\right|=\sum_{x\in S}\sum_{y\in S^c}\eta_t(x,y)\le \left|\partial_E S\right|.\]
		For each edge $e\in \partial_E S$, we have
		\begin{equation}\label{Peta0}
			\p_{{\eta}_0}(\eta_t(e)=1)\ge (1-e^{-\mu t})p ,
		\end{equation}
		where the equality holds when $\eta_0(e)=0$. Let $\e_{{\eta}_0}$ denote the expectation with respect to $\p_{{\eta}_0}$. By applying Fubini's theorem and \eqref{Peta0}, we obtain that
		\[\e_{{\eta}_0}\left[ \int_{0}^{1}\left|\partial_{\eta_{t}} S\right| \dd t \right]=  \int_{0}^{1}\e_{{\eta}_0} \left[\left|\partial_{\eta_{t}} S\right|\right] \dd t \ge |\partial_E S|\cdot \int_0^1 (1-e^{-\mu t})p \dd t\ge 			c p |\partial_E S|.\]	
		Then, from this inequality, we get that 
		\begin{align*}
			cp |\partial_E S| \leq & \e_{{\eta}_0}\left[ \int_{0}^{1}\left|\partial_{\eta_{t}} S\right| \dd t\right] \le |\partial_E S| \p_{\eta_0}\left(\int_{0}^{1}\left|\partial_{\eta_{t}} S\right| \dd t \geq \frac{	 cp}{2}|\partial_E S|\right)+\frac{		 c p }{2}|\partial_E S|,
		\end{align*}	
		which implies that 
		\begin{align*}
			\p_{\eta_0}\left(\frac{1}{|S	|}\int_{0}^{1}\left|\partial_{\eta_{t}} S\right| \dd t \geq \frac{	 cp}{2}\frac{|\partial_E S|}{|S|}\right)\ge \frac{					 cp}{2}	.					
		\end{align*}
		This estimate, together with the fact that $|\partial_E S|\ge d\Phi(G)|S|$, gives the desired inequality \eqref{eq:partialSnn}. 
	\end{proof}
	
	\begin{corollary}\label{cor:smallmu}
		Under the setting of \Cref{lem:largemu}, we have that  
		\begin{equation}		\label{eq:lowerbound_general_largemu}		
			v_p(\mu)\ge 	\frac{c^3p^3\Phi(G)^2}{48e^2\log d}.										
		\end{equation}
	\end{corollary}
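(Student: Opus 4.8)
The plan is to reuse, almost verbatim, the argument that proved \Cref{thm:Supercritical}: keep the Diaconis--Fill coupling $\widehat{\mathbb{P}}$, the supermartingale $(M_n)$ of \eqref{eq:M0n}, the bound \eqref{eq:Mnlb}, the growth estimate \Cref{lem::ES_exp}, and the ball-escape step that produced \eqref{eqn::super_lowerbound_G}, namely $v_p(\mu)\ge c_0c_1^2/(6e^2d^2\log d)$. The one ingredient that must change is \Cref{lem:Phi2}, which I would replace, for $\mu>1/2$ and $\eta_0\sim\pi_p$, by the estimate
\begin{equation*}
\widehat{\mathbb{P}}\!\left(\int_n^{n+1}\left|\partial_{\eta_t}S_n\right|\dd t\ \ge\ c_1|S_n|\right)\ \ge\ c_0,\qquad c_0:=\frac{cp}{2},\quad c_1:=\frac{cpd\,\Phi(G)}{2},
\end{equation*}
valid for every $n\in\mathbb{N}\cup\{0\}$, where $c=\int_0^1(1-e^{-t/2})\dd t>0$ (note that $\mu>1/2$ forces $1-e^{-\mu t}\ge 1-e^{-t/2}$).

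The point I would exploit is that the bound in \Cref{lem:largemu} is \emph{uniform over every initial configuration $\eta_0$ and every finite nonempty subset $S$}; this is precisely what makes the present case strictly easier than the supercritical one, where stationarity of the environment seen from the particle was needed to control the random set $S_n$. Here one notes that $S_n$ is a function of the environment on $[0,n]$ together with the auxiliary uniform variables driving the evolving-set chain, while, conditionally on $\eta_n$, the segment $(\eta_t)_{t\in[n,n+1]}$ is built from $\eta_n$ using fresh refresh clocks and is therefore conditionally independent of $S_n$, with the law of the environment process started from $\eta_n$ over a unit time interval (by the time-homogeneous Markov property of $\bm{\eta}$). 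Applying \Cref{lem:largemu} with initial configuration $\eta_n$ and subset $S_n$ gives, conditionally on $(S_n,\eta_n)$,
\begin{equation*}
\widehat{\mathbb{P}}\!\left(\left.\frac{1}{|S_n|}\int_n^{n+1}\left|\partial_{\eta_t}S_n\right|\dd t\ \ge\ \frac{cpd}{2}\Phi(G)\ \right|\ S_n,\eta_n\right)\ \ge\ \frac{cp}{2};
\end{equation*}
since the right-hand side is a deterministic constant, I would then take expectations (the outer average over $\eta_0\sim\pi_p$ causing no trouble) to recover the displayed estimate with $c_0=cp/2$ and $c_1=cpd\,\Phi(G)/2$.

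Finally, feeding these values of $c_0,c_1$ into \Cref{lem::ES_exp} and then into \eqref{eqn::super_lowerbound_G} yields
\begin{equation*}
v_p(\mu)\ \ge\ \frac{c_0c_1^2}{6e^2d^2\log d}\ =\ \frac{(cp/2)\,(cpd\,\Phi(G)/2)^2}{6e^2d^2\log d}\ =\ \frac{c^3p^3\Phi(G)^2}{48e^2\log d},
\end{equation*}
which is \eqref{eq:lowerbound_general_largemu}; the passage to an arbitrary initial bond configuration is immediate from \Cref{prop:stationary_arbitrary} together with \Cref{lem:existence_speed}. I expect the only genuinely new point — and a routine one — to be the conditional-independence claim legitimizing the use of \Cref{lem:largemu} for the random set $S_n$; every other step is quoted directly from the proof of \Cref{thm:Supercritical}.
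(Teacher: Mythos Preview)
Your proposal is correct and follows essentially the same approach as the paper: identify $c_0=cp/2$ and $c_1=cpd\,\Phi(G)/2$ from \Cref{lem:largemu}, verify that these constants feed into \eqref{eq:PSn_G}, and then invoke the machinery of Section~\ref{subsec::DFcoupling} culminating in \eqref{eqn::super_lowerbound_G}. The paper's own proof is two sentences long and simply asserts that \eqref{eq:partialSnn} yields \eqref{eq:PSn_G} with those values of $c_0,c_1$; your conditional-independence paragraph (using the Markov property of $\bm{\eta}$ at time $n$ and the uniformity of \Cref{lem:largemu} over all deterministic $S$ and $\eta_0$) spells out precisely the step the paper leaves implicit.
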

	\begin{proof}
		With \eqref{eq:partialSnn}, we obtain~\eqref{eq:PSn_G} with the following values: 
		\[c_0=\frac{cp}{2}, \quad c_1=\frac{cpd}{2}\Phi(G).\]
		Using exactly the same argument as in Section~\ref{subsec::DFcoupling} and setting $c_0, c_1$ in~		\eqref{eqn::super_lowerbound_G} as above, we can conclude~\eqref{eq:lowerbound_general_largemu}. 								 \end{proof}

	\subsubsection{Case $\mu \in (0,1/2]$.}
	
	It remains to deal with the more challenging case where $\mu \in (0,1/2]$. In this case, we discretize time by observing the random walk at times $n/\mu$ for $n\in \N$. Similarly to \eqref{eq:discre_MC}, we consider another time-inhomogeneous Markov chain $Y_n:=X_{n/\mu}$ for $n \in \mathbb{N} \cup\{0\}$. With a slight abuse of notation, we continue to denote the transition probability by
	\begin{equation}\label{eq:discre_MC2}
		P_{n+1}^{\bm{\eta}}(x, y)=\mathbb{P}^{\bm{\eta}}\left(Y_{n+1}=y \;|\; Y_{n}=x\right),\quad \forall x, y \in V,\ n \in \mathbb{N} \cup\{0\} .
	\end{equation}
	Then, we define the Diaconis-Fill coupling and the evolving set process using $P_{n+1}^{\bm{\eta}}$ as in \Cref{subsec::DFcoupling}. Moreover, we adopt exactly the same notations as those below \eqref{eq:discre_MC}, replacing the $X_n$'s with $Y_n$. Similarly to the proof of Theorem~\ref{thm:Supercritical}, it suffices to establish the following counterpart of the estimate \eqref{eq:EIk}: there exist constants $c_0, c_1>0$ such that  
	\begin{equation}\label{eq:PSn_G_lowerbound_musmall}		
		\widehat{\mathbb{P}}\left( \Phi_{S_n}\ge c_1/(ed) \right) \geq c_0,\quad \forall n \in \mathbb{N} \cup\{0\}. 
	\end{equation}
	To this end, we need the following two auxiliary lemmas whose proofs are similar to those of Lemmas 3.1 and 3.2 in \cite{PSS18}.


	\begin{lemma}\label{lem:evo4}
		The following estimate holds for all nonempty finite subsets $S\subset V$, $p\in (0,1]$, $\mu\in (0,1/2]$, and any initial environment configuration ${\eta}_0$: 
		\begin{equation}\label{eq:size_partialS}
			\p_{\eta_0}\left(   \left|\{e\in \partial_E S: \eta_t(e) =1 \ \text{for all} \ t\in [\mu^{-1}-1,\mu^{-1}]\}\right|\ge \beta |\partial_E S| \right) \ge \frac{\beta}{2},
		\end{equation} 
		where $\beta=\frac{p}{2}(1-e^{-1/2})e^{-(1-p)/2}$. 
	\end{lemma}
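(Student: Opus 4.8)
The plan is to peel the statement down to a single-edge estimate and then invoke linearity of expectation together with the reverse-Markov (Paley--Zygmund) argument already used several times above. Write $a:=\mu^{-1}-1$; since $\mu\le 1/2$ we have $a\ge 1>0$, so $[a,a+1]=[\mu^{-1}-1,\mu^{-1}]$ is a genuine subinterval of the half-line and $\mu a=1-\mu\ge 1/2$. Let $W:=\{e\in\partial_E S:\eta_t(e)=1\text{ for all }t\in[a,a+1]\}$ be the random set in \eqref{eq:size_partialS}. The first and main step is to show that for \emph{every} edge $e\in\partial_E S$ and \emph{every} initial configuration $\eta_0$,
\[
q_e:=\p_{\eta_0}\bigl(\eta_t(e)=1\text{ for all }t\in[a,a+1]\bigr)\ \ge\ p\,(1-e^{-1/2})\,e^{-(1-p)/2}=2\beta.
\]

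To get this I would use that the refresh times of $e$ form a rate-$\mu$ Poisson process and that the refresh outcomes are i.i.d.\ $\mathrm{Ber}(p)$, independent of the times and of $\eta_0$; consequently the event $\{\eta_t(e)=1\ \forall t\in[a,a+1]\}$ equals $\{\eta_a(e)=1\}\cap\{\text{every refresh of }e\text{ in }(a,a+1]\text{ comes up open}\}$, and these two events are independent, the first being measurable with respect to the configuration and refreshes up to time $a$. For the first factor, conditioning on whether $e$ has been refreshed at all before time $a$ gives $\p_{\eta_0}(\eta_a(e)=1)=(1-e^{-\mu a})p+e^{-\mu a}\mathbf 1(\eta_0(e)=1)\ge(1-e^{-\mu a})p\ge p(1-e^{-1/2})$, where the last inequality uses $\mu a\ge 1/2$. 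For the second factor, conditioning on the number $N\sim\mathrm{Poisson}(\mu)$ of refreshes of $e$ inside $(a,a+1]$ gives $\e[p^N]=e^{-\mu(1-p)}\ge e^{-(1-p)/2}$, using $\mu\le 1/2$. Multiplying the two bounds yields $q_e\ge 2\beta$, uniformly in $e$ and in $\eta_0$.

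The remaining step is routine. By the previous bound $\e_{\eta_0}[|W|]=\sum_{e\in\partial_E S}q_e\ge 2\beta|\partial_E S|$, while deterministically $0\le|W|\le|\partial_E S|$; splitting $\e_{\eta_0}[|W|]$ over the events $\{|W|\ge\beta|\partial_E S|\}$ and its complement gives $2\beta|\partial_E S|\le|\partial_E S|\,\p_{\eta_0}(|W|\ge\beta|\partial_E S|)+\beta|\partial_E S|$, hence $\p_{\eta_0}(|W|\ge\beta|\partial_E S|)\ge\beta\ge\beta/2$, which is \eqref{eq:size_partialS} (the case $\partial_E S=\emptyset$ being vacuous). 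I do not expect a genuine obstacle here; the two points that require care are keeping the per-edge bound uniform over $\eta_0$ — which is precisely why one must route through a refresh that sets $e$ to open rather than relying on the initial state — and correctly accounting for the refreshes that land \emph{inside} $[a,a+1]$, each of which could re-close $e$, so that the event truly requires all of them to come up open. The hypothesis $\mu\le 1/2$ enters only to ensure $a\ge 0$ and $\mu a=1-\mu\ge 1/2$, which is what makes the constants independent of $\mu$.
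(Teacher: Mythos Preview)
Your proof is correct and follows essentially the same route as the paper: a uniform per-edge lower bound $q_e\ge 2\beta$ (obtained by comparing to the all-closed initial state) followed by a first-moment/Paley--Zygmund splitting. The only cosmetic difference is that the paper first reduces to $\eta_0\equiv 0$ so that $|W|$ is exactly $\mathrm{Bin}(|\partial_E S|,\,p(1-e^{-1+\mu})e^{-\mu(1-p)})$ and then applies Paley--Zygmund, whereas you bound each $q_e$ directly and use the elementary reverse-Markov split; both yield $\p_{\eta_0}(|W|\ge\beta|\partial_E S|)\ge\beta\ge\beta/2$.
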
 
	\begin{proof}
		Note that the LHS is minimized when the initial configuration is $\eta_0(e)\equiv 0$ for $e\in E$. Thus, it is bounded from below by the probability that $|\text{Bin}(|\partial_E S|, (1-e^{-1+\mu})pe^{-\mu(1-p)})|\ge \beta |\partial_E S| $. Here, $\text{Bin}(n, q)$ denotes a binomial random variable with parameters $n$ and $q$. Applying the Paley–Zygmund inequality and noticing that $(1-e^{-1+\mu})pe^{-\mu(1-p)}\ge 2\beta$ for all $\mu\in (0,1/2]$, we get \eqref{eq:size_partialS}. 
	\end{proof}
	
	\begin{lemma}\label{lem:evo3}
		For all nonempty finite subsets $S\subset V$, $p\in (0,1]$, and $\mu\in (0,1/2]$, if $\bm{\eta}$ satisfies that
		\begin{equation}\label{eq:cond_partialS}
			\left|\{e\in \partial_E S: \eta_t(e) =1 \ \text{for all} \ t\in [\mu^{-1}-1,\mu^{-1}]\}\right|\ge \beta|\partial_E S|
		\end{equation} 
		for some $\beta>0$, then we have that
		\begin{align}\label{eq:PhiSn} \Phi^{\bm{\eta}_{[0,\mu^{-1}]}}_S\ge \frac{\beta}{de}\Phi(G),\quad  \forall n\in \N\cup\{0\}, \ \mu \in (0,1/2]. 
		\end{align}
		Here, $\bm{\eta}_{[0,\mu^{-1}]}$ refers to the entire environment process between times 0 and $\mu^{-1}$, and $\Phi^{\bm{\eta}_{[0,\mu^{-1}]}}_S$ is defined in a similar way as in \eqref{eq.defSn}:
		\begin{equation}
			\label{eq:PhiSnmu}  \Phi_{S}^{\bm{\eta}_{[0,\mu^{-1}]}}=\frac{1}{\left|S\right|} \sum_{x \in S} \sum_{y \in S^{c}} \widehat{\mathbb{P}}^{\bm{\eta}_{[0,\mu^{-1}]}}\left(Y_{1}=y | Y_{0}=x\right).
		\end{equation}
	\end{lemma}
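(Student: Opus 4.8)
The plan is to unwind the definition \eqref{eq:PhiSnmu}. Since the $X$-marginal of the Diaconis--Fill chain has transition kernel $P_1^{\bm{\eta}}(x,y)=\mathbb{P}^{\bm{\eta}}(X_{\mu^{-1}}=y\mid X_0=x)$, one has $\sum_{y\in S^c}\widehat{\mathbb{P}}^{\bm{\eta}_{[0,\mu^{-1}]}}(Y_1=y\mid Y_0=x)=\mathbb{P}^{\bm{\eta}}(X_{\mu^{-1}}\notin S\mid X_0=x)$, so that $\Phi_{S}^{\bm{\eta}_{[0,\mu^{-1}]}}\,|S|=\sum_{x\in S}\mathbb{P}^{\bm{\eta}}(X_{\mu^{-1}}\notin S\mid X_0=x)$. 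I would then split the time interval $[0,\mu^{-1}]$ at time $\mu^{-1}-1$ (which is $\ge 1$ since $\mu\le 1/2$): on $[0,\mu^{-1}-1]$ the quenched walk is controlled only through the fact that the counting measure $\pi\equiv 1$ is stationary for its dynamics (at each fixed time the jump rates $\tfrac1d\eta_t(\{x,y\})$ are symmetric in $x,y$), while the exit from $S$ is produced during the final unit $[\mu^{-1}-1,\mu^{-1}]$, where hypothesis \eqref{eq:cond_partialS} supplies many open boundary edges.

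For the combinatorial input, set $F:=\{e\in\partial_E S:\eta_t(e)=1\text{ for all }t\in[\mu^{-1}-1,\mu^{-1}]\}$, so that $|F|\ge\beta|\partial_E S|$ by \eqref{eq:cond_partialS}, and let $U\subset S$ be the set of $S$-endpoints of edges of $F$. Since each vertex has degree $d$, the map $F\to U$ sending an edge to its $S$-endpoint is at most $d$-to-one, so $|U|\ge|F|/d\ge(\beta/d)|\partial_E S|$; and since $G$ is $d$-regular, the definition \eqref{eq.defCheeger} of the Cheeger constant gives $|\partial_E S|\ge d\,\Phi(G)|S|$, whence $|U|\ge\beta\Phi(G)|S|$. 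For the last unit, put $g(z):=\mathbb{P}^{\bm{\eta}}(X_{\mu^{-1}}\notin S\mid X_{\mu^{-1}-1}=z)$, a quantity depending on $\bm{\eta}$ only through $[\mu^{-1}-1,\mu^{-1}]$. If $z\in U$, fix $e=\{z,w\}\in F$ with $w\in S^c$: on the event that the walk's clock rings exactly once in $(\mu^{-1}-1,\mu^{-1})$ (probability $e^{-1}$) and that this single attempted jump is along $e$ (conditionally independent, probability $1/d$), the jump succeeds because $e$ is open throughout $[\mu^{-1}-1,\mu^{-1}]$ and the walk then remains at $w\notin S$ until time $\mu^{-1}$; hence $g(z)\ge e^{-1}/d$. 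If $z\in S^c$, then with probability at least $e^{-1}$ the clock does not ring during that unit, so $X_{\mu^{-1}}=z\notin S$ and $g(z)\ge e^{-1}\ge e^{-1}/d$. In all cases $g(z)\ge(ed)^{-1}\mathbf{1}(z\in U\cup S^c)$.

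Finally I would combine the two halves. Writing $Q(x,z):=\mathbb{P}^{\bm{\eta}}(X_{\mu^{-1}-1}=z\mid X_0=x)$ and using the Markov property of the quenched walk together with the bound on $g$,
$$\Phi_{S}^{\bm{\eta}_{[0,\mu^{-1}]}}\,|S|=\sum_{x\in S}\sum_{z\in V}Q(x,z)\,g(z)\;\ge\;\frac{1}{ed}\sum_{x\in S}\sum_{z\in U\cup S^c}Q(x,z).$$
Now $Q$ is a transition kernel, so $\sum_{z\in V}Q(x,z)=1$, and since $\pi\equiv 1$ is stationary for the quenched dynamics on $[0,\mu^{-1}-1]$ one also has $\sum_{x\in V}Q(x,z)=1$ for every $z$. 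As $U\subset S$, the sets $U\cup S^c$ and $S\setminus U$ partition $V$, so
$$\sum_{x\in S}\sum_{z\in U\cup S^c}Q(x,z)=|S|-\sum_{z\in S\setminus U}\sum_{x\in S}Q(x,z)\;\ge\;|S|-\sum_{z\in S\setminus U}\sum_{x\in V}Q(x,z)=|S|-|S\setminus U|=|U|.$$
Combining the last two displays with $|U|\ge\beta\Phi(G)|S|$ yields $\Phi_{S}^{\bm{\eta}_{[0,\mu^{-1}]}}\ge\beta\Phi(G)/(ed)$, which is \eqref{eq:PhiSn}; the ``$\forall n$'' version is the same statement shifted in time and follows verbatim. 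The only real content is the next-to-last display: column-stochasticity of $Q$ — that is, stationarity of the counting measure, which survives the time-inhomogeneity of the percolation environment — prevents the walk started in $S$ from concentrating all of its mass inside $S\setminus U$ at time $\mu^{-1}-1$, so that a definite fraction of it sits on vertices from which a single successful jump across $F$ (or simply being already in $S^c$) leaves $S$ during the last unit. Everything else is a routine one-step estimate and the isoperimetric inequality $|\partial_E S|\ge d\,\Phi(G)|S|$.
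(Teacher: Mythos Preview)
Your proof is correct and follows essentially the same strategy as the paper's: split the interval at $\mu^{-1}-1$, define the set of ``good'' vertices in $S$ adjacent to persistently open boundary edges (your $U$ is the paper's $S_{\text{good}}$), use column-stochasticity of $Q$ (stationarity of counting measure) to prevent mass from piling into $S\setminus U$, and finish with the one-step exit estimate on the final unit. Your version is if anything slightly cleaner---you work directly with the Markov decomposition $\sum_{x,z}Q(x,z)g(z)$ and the complement identity, whereas the paper phrases the same computation as a conditional-probability factorization---and you correctly write the ``stay put'' probability for $z\in S^c$ as $e^{-1}$ rather than the paper's $1-e^{-1}$ (a harmless slip there, since the minimum is $1/(de)$ either way).
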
 
	\begin{proof}
		We denote
		$$S_{\text{good}}:=\left\{x\in S: \text{there exists an edge $e$ from $x$ to $S^c$ that is open during $[\mu^{-1}-1,\mu^{-1}]$} \right\},$$
		and let $S_{\text{bad}}=S\setminus S_{\text{good}}$. By \eqref{eq:cond_partialS}, we have that
		\begin{equation}\label{eq:Sgood}
			|S_{\text{good}}| \ge \frac{1}{d}\left|\{e\in \partial_E S: \eta_t(e) =1 \ \text{for all} \ t\in [\mu^{-1}-1,\mu^{-1}]\}\right|\ge \frac{\beta}{d}|\partial_E S|\ge \beta\Phi(G)|S| . 
		\end{equation}

		Consider $\Phi_S= \Phi^{\bm{\eta}_{[0,\mu^{-1}]}}_S$ as defined in \eqref{eq:PhiSnmu} and abbreviate $\bm{\eta}= \bm{\eta}_{[0,\mu^{-1}]}$. 
		Since $\pi(x)\equiv 1$ is a stationary measure for all realizations of the environment according to the definition of the random walk, we have  
		\begin{align} \label{eq:trivial_stat}
			\max_{y\in V} \sum_{x \in S} \widehat{\mathbb{P}}^{\bm{\eta}}\left(X_{\mu^{-1}-1}=y | X_{0}=x\right) \le \max_{y\in V} \sum_{x \in V} \widehat{\mathbb{P}}^{\bm{\eta}}\left(X_{\mu^{-1}-1}=y | X_{0}=x\right) \le 1.
		\end{align}
		We observe that 
		\begin{align}\label{eq:PhiS}
			\Phi_S &= \widehat{\mathbb{P}}^{\bm{\eta}}\left(Y_{1}\in S^c | Y_{0}\in S\right) \\
			&   \ge \widehat{\mathbb{P}}^{\bm{\eta}}\left(X_{\mu^{-1}}\in S^c | X_{0}\in S, X_{\mu^{-1}-1}\in S_{\text{good}}\cup S^c\right) \widehat{\mathbb{P}}^{\bm{\eta}}\left(X_{\mu^{-1}-1}\in S_{\text{good}}\cup S^c| X_{0}\in S\right) ,\nonumber
		\end{align}
		where the conditioning $Y_0\in S$ assigns a probability of $|S|^{-1}$ to each point in $S$. Using \eqref{eq:trivial_stat}, we can bound the second factor by 
		\begin{align}
			\widehat{\mathbb{P}}^{\bm{\eta}}\left(X_{\mu^{-1}-1}\in S_{\text{good}}\cup S^c| X_{0}\in S\right) &= 1- \widehat{\mathbb{P}}^{\bm{\eta}}\left(X_{\mu^{-1}-1}\in S_{\text{bad}}| X_{0}\in S\right) \nonumber\\
			&\ge 1-\frac{|S_{\text{bad}}|}{|S|}=\frac{|S_{\text{good}}|}{|S|}\ge \beta\Phi(G) ,\label{eq:Sgood_lower}
		\end{align}
		where we used \eqref{eq:Sgood} in the last step. For the first factor, if $X_{\mu^{-1}-1}\in S_{\text{good}}$, we fix an arbitrary edge $e$ from $X_{\mu^{-1}-1}$ to $S^c$ that is open during $[\mu^{-1}-1,\mu^{-1}]$. The probability that the random walk clock rings exactly once
		during $[\mu^{-1}-1,\mu^{-1}]$ and the attempted jump is along $e$ is at least $(de)^{-1}$. On the other hand, if $X_{\mu^{-1}-1}\in S^c$, then the probability that the random walk clock does not ring during $[\mu^{-1}-1,\mu^{-1}]$ is at least $1-e^{-1}$. Thus, we can bound \eqref{eq:PhiS} by 
		$$ \Phi_S \ge \left( \frac{1}{de}\wedge (1-e^{-1})\right)\beta\Phi(G)= \frac{\beta\Phi(G)}{d e}.$$
		This leads to \eqref{eq:PhiSn}.
	\end{proof}
	
	\begin{corollary}\label{cor:largemu}
		Under the setting of \Cref{prop:general_Lower}, for any $p\in (0,1)$ and $\mu \in (0, 1/2]$, we have 
		\begin{equation}		\label{eq:lowerbound_general_smallmu}		
			v_p(\mu)\ge \mu	\frac{\beta^3\Phi(G)^2}{12e^2d^2\log d},					
		\end{equation}		
		where $\beta=\frac{p}{2}(1-e^{-1/2})e^{-(1-p)/2}$. 
	\end{corollary}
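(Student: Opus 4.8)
The plan is to reduce \Cref{cor:largemu} to the bound \eqref{eq:PSn_G_lowerbound_musmall} with explicit constants and then run the evolving-set argument of \Cref{subsec::DFcoupling} essentially verbatim for the time-discretized chain $Y_n=X_{n/\mu}$ introduced near \eqref{eq:discre_MC2}. Concretely, I would prove that \eqref{eq:PSn_G_lowerbound_musmall} holds with $c_0=\beta/2$ and $c_1=\beta\,\Phi(G)$, where $\beta=\frac p2(1-e^{-1/2})e^{-(1-p)/2}$ as in \Cref{lem:evo4}; with these choices one has $c_0c_1^2/(6e^2d^2\log d)=\beta^3\Phi(G)^2/(12e^2d^2\log d)$, which is exactly the stated constant.

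The first step is to combine \Cref{lem:evo4} and \Cref{lem:evo3}. Fixing $n\in\mathbb N\cup\{0\}$, I would condition on the evolving sets $S_0,\dots,S_n$ together with the bond configuration $\eta_{n/\mu}$: the set $S_n$ is measurable with respect to this information and depends only on the environment on $[0,n/\mu]$, whereas $\Phi_{S_n}$ (built from the quenched kernel $P^{\bm\eta}_{n+1}$) is a deterministic function of the environment on the window $[n/\mu,(n+1)/\mu]$, which---given $\eta_{n/\mu}$---is an independent fresh run of the dynamics started from $\eta_{n/\mu}$. Since \Cref{lem:evo4} holds for \emph{every} initial configuration, conditionally the event in \eqref{eq:cond_partialS} with $S=S_n$ has probability at least $\beta/2$, and on that environment event \Cref{lem:evo3} forces $\Phi_{S_n}\ge \beta\Phi(G)/(de)=c_1/(ed)$ deterministically. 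Averaging over the conditioning yields $\widehat{\mathbb P}(\Phi_{S_n}\ge c_1/(ed))\ge c_0$, which is \eqref{eq:PSn_G_lowerbound_musmall}.

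With \eqref{eq:PSn_G_lowerbound_musmall} in hand, the rest copies \Cref{subsec::DFcoupling}. Setting $I_k:=\mathbf{1}(\Phi_{S_k}\ge c_1/(ed))$ one has $\widehat{\mathbb E}[I_k]\ge c_0$, so, exactly as in \Cref{lem::ES_exp} (via the supermartingale $M_n$ of \eqref{eq:M0n} and the Markov bound \eqref{eq:Mnlb}), one obtains $\widehat{\mathbb P}(|S_n|>(c_0/4)^2\exp[(c_0c_1^2/(6e^2d^2))n])\ge c_0/4$ for all $n$. Comparing $|B(o,k)|\le d^k$ with the Diaconis--Fill identity \eqref{eq:DF_key}, just as in the proof of \Cref{thm:Supercritical}, shows that for $k(n)\asymp (c_0c_1^2/(6e^2d^2\log d))\,n$ one has $\widehat{\mathbb P}(Y_n\in B(o,k(n)))\le 1-c_0/8$, hence $\widehat{\mathbb E}[\dist(X_0,X_{n/\mu})]\gtrsim (c_0c_1^2/(6e^2d^2\log d))\,n$. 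By \Cref{lem:existence_speed}, $v_p(\mu)$ equals the $L^1$-limit of $\dist(X_0,X_t)/t$; letting $t=n/\mu\to\infty$ converts the per-step rate into a per-unit-time rate and produces the extra factor $\mu$, giving $v_p(\mu)\ge \mu\,c_0c_1^2/(6e^2d^2\log d)=\mu\,\beta^3\Phi(G)^2/(12e^2d^2\log d)$. Finally \Cref{prop:stationary_arbitrary} removes the assumption $\eta_0\sim\pi_p$.

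The only delicate point---and it is more a matter of care than of depth---is the first step: passing from the fixed-set statements of \Cref{lem:evo4} and \Cref{lem:evo3} to the genuinely random evolving set $S_n$. This is exactly where one needs the uniformity of \Cref{lem:evo4} over all initial environments, together with the fact that, given $\eta_{n/\mu}$, the set $S_n$ and the environment driving the transition $n\to n+1$ are conditionally independent. The remaining ingredients---threading the discretization scale $1/\mu$ through the definition \eqref{eq.speed} of the speed to recover the linear factor $\mu$, and the exponential-growth estimate for $|S_n|$---are imported without change from \Cref{subsec::DFcoupling}.
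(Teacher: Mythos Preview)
Your proposal is correct and follows essentially the same route as the paper: combine \Cref{lem:evo4} and \Cref{lem:evo3} to obtain \eqref{eq:PSn_G_lowerbound_musmall} with $c_0=\beta/2$ and $c_1=\beta\Phi(G)$, then rerun the evolving-set machinery of \Cref{subsec::DFcoupling} for the chain $Y_n=X_{n/\mu}$, picking up the extra factor $\mu$ from the time rescaling. Your write-up is in fact more detailed than the paper's---in particular your conditioning argument explaining how to pass from the fixed-$S$ statements of \Cref{lem:evo4}--\Cref{lem:evo3} to the random $S_n$, and the final appeal to \Cref{prop:stationary_arbitrary}, make explicit steps the paper leaves implicit.
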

	\begin{proof}
		Lemmas \ref{lem:evo4} and \ref{lem:evo3} together imply that \eqref{eq:PSn_G_lowerbound_musmall}	holds with 
		$$c_0=\frac{\beta}{2},\quad c_1=\beta\Phi(G) .$$
		Following the same proof as in Section~\ref{subsec::DFcoupling}, we obtain an estimate that is similar to \eqref{eqn::super_lowerbound_G} but with the speed scaled by $\mu^{-1}$:
		\[\frac{1}{\mu}v_p(\mu)\ge \frac{c_0c_1^2}{6e^2d^2\log d}= \frac{\beta^3\Phi(G)^2}{12 e^2d^2\log d}.\]
		This gives \eqref{eq:lowerbound_general_smallmu}. 
	\end{proof}

	\begin{proof}[Proof of \Cref{prop:general_Lower}]
		Combining \Cref{cor:smallmu} for the $\mu > 1/2$ case and \Cref{cor:largemu} for the $\mu \le 1/2$ case, we conclude \eqref{eq:lower_vpmu_general}.
	\end{proof}

	\section{Concluding remarks and questions}\label{sec:conclusion}

	We believe that in the critical case, the speed of the random walk should be of order $\mu^\alpha$ for some fixed exponent $\alpha=\alpha(G)>0$ (with potential $\log(1/\mu)$ factors). Our results suggest that $1/2\le \alpha\le 1$. Based on further heuristics, we conjecture that the correct exponent is $\alpha=3/4$ for all nonamenable transitive unimodular graphs. 
	Furthermore, we expect that the diffusion constant $D_{p_c}(\mu)$ for the random walk on critical dynamical percolation in $\z^d$ would exhibit a similar behavior $D_{p_c}(\mu)\sim \mu^{\alpha}$ for a fixed exponent $\alpha=\alpha(\mathbb Z^d)>0$. In the companion paper \cite{GJPSWY_Zd}, we study the random walk on critical dynamical percolation in $\z^d$ and establish a similar bound $1/2\le \alpha(\mathbb Z^d) \le 1$ when $d\ge 11$. As mentioned in the introduction, it is commonly believed that the random walk on critical percolation in high-dimensional $\z^d$ lattices is closely connected to that in trees. Consequently, we conjecture that $\alpha(\mathbb Z^d)$ should match the exponent observed in trees, e.g., $\alpha(\mathbb Z^d)=\alpha(\mathbb T_{b+1})$ for large $d$ (e.g., $d\ge 11$) and any $b\ge 2$.

	The lower bound for the speed of the random walk on supercritical dynamical percolation in trees is quantitive in terms of $d$ and $p$ as shown in \Cref{thm::supercritical_quantitative}. In particular, for the near-critical case when $p\downarrow p_c$, the dependence of the lower bound on $p-p_c$ is shown in \eqref{eq:lowerv9}.  
	We now consider the simple random walk on a Galton-Watson (GW) tree with 
	$$p_k={b\choose k}p^k (1-p)^{b-k},\quad k\in \{0,1,\ldots, b\}. $$
	This walk is closely related to the random walk on Bernoulli-$p$ percolation in $\mathbb{T}_d$. By Theorem 17.13 and Exercise 17.7 of \cite{LP16} (or Theorem 3.2 of \cite{LPP95} by Lyons, Pemantle, and Peres), we know that the speed on the GW tree is
	\begin{equation}\label{eq:wtvp}
		\wt v(p)=\sum_{k=0}^{b}\frac{k-1}{k+1}p_k\frac{1-q^{k+1}}{1-q^2}.
	\end{equation}
	Here, $q=1-\wt\theta_p$ is the extinction probability, satisfying the equation \( (1-p\wt\theta_p)^b=1-\wt\theta_p.\) From this equation, it follows that as $p\downarrow p_c,$
	\begin{equation}\label{eq:asymp_p} p-p_c= \frac{b-1}{2}p^2\wt\theta_p - \frac{(b-1)(b-2)}{6}p^3\wt\theta_p^2 +O(\wt\theta_p^3). 
	\end{equation}
	By performing a Taylor expansion of \eqref{eq:wtvp} around $\wt\theta_p=0$, we obtain that 
	\begin{align}
		\wt v(p)&=\frac{1}{2-\wt\theta_p}\sum_{k=0}^{b} (k-1) p_k \left( 1- \frac{k}{2}\wt\theta_p+\frac{k(k-1)}{6}\wt\theta_p^2\right) +O(\wt\theta_p^3) \nonumber\\
		&=\frac{1}{2-\wt\theta_p}\mathbb E \left[(Z-1) -\frac{\wt\theta_p }{2} Z(Z-1) + \frac{\wt\theta_p^2}{6}Z(Z-1)^2\right] +O(\wt\theta_p^3),\label{eq:Taylorv}
	\end{align}
	where $Z\sim\text{Bin}(b, p)$ is a binomial random variable with parameters $b$ and $p$. Using \eqref{eq:asymp_p} and that 
	$$ \mathbb E Z=bp,\quad \e [Z(Z-1)]=b(b-1)p^2,\quad \e[Z(Z-1)^2]= b(b-1)(b-2)p^3+b(b-1)p^2,$$
	we can simplify \eqref{eq:Taylorv} as 
	\begin{align}
		\wt v(p)    &=\frac{b(b-1)p^2}{6(2-\wt\theta_p)}\wt\theta_p^2+O(\wt\theta_p^3) \gtrsim (p-p_c)^2\ \ \text{ as }\ \ p\downarrow p_c,\label{eq:Taylorv2}
	\end{align}
	where we used the asymptotic \eqref{eq:wtThetap} in the second step. 
	Taking into account the fact that $X_0=o$ has a probability $\theta_p\sim p-p_c$ of lying inside an infinite component, we heuristically expect that the random walk on near-critical dynamical percolation in $\mathbb T_d$ should have a speed
	\[ v_p(\mu)\gtrsim \theta_p \wt v(p) \gtrsim  (p-p_c)^3,\quad \text{ as }p\downarrow p_c.\]
	Thus, the exponent shown in \eqref{eq:lowerv9} is likely not sharp. 

	In contrast to the tree case, the lower bound for the speed on general nonamenable graphs depends on an implicit constant $c_p$ in \Cref{lem:trifucation_general}, which in turn relies on the specific graph $G$. It is interesting to investigate whether the speed has a uniform lower bound that only depends on the Cheeger constant, i.e., there exists a positive function $f$ such that for all $p>p_c$,
	$$\inf_{\mu>0} v(\mu, p, G) \ge f(p,d,\Phi(G))>0.$$
	If such a function exists, it would be desirable to obtain more quantitative estimates for it when $p>p_c$. In particular, we will be interested in determining the exact exponent of $p-p_c$ in $v(\mu,p)$ as $p\downarrow p_c$.

	Finally, an important open question concerns the monotonicity of the speed $v(p,\mu)$ as a function of $\mu$ and $p$ for all transitive graphs. Currently, it is unknown whether the speed exhibits monotonicity with respect to either $\mu$ or $p$, but we conjecture that both forms of monotonicity hold. 
	In particular, if the monotonicity in $\mu$ is valid, then the speed of the random walk on static percolation should always give a lower bound for the speed on dynamical percolation. This open question was also proposed at the conclusion of \cite{peres-stauffer-steif} regarding the random walk on dynamical percolation of $\z^d$ lattices.

	\section*{Acknowledgments}
	Chenlin Gu is supported by the National Key R\&D Program of China (No. 2023YFA1010400) and National Natural Science Foundation of China (12301166).
	Jianping Jiang is supported by National Natural Science Foundation of China (12271284 and 12226001). Hao Wu is supported by Beijing Natural Science Foundation (JQ20001). Fan Yang is supported by the National Key R\&D Program of China (No. 2023YFA1010400). 
	
	\bibliographystyle{abbrv}
	\bibliography{ref.bib}

\end{document}